\DeclareRobustCommand{\SkipTocEntry}[5]{}
\theoremstyle{plain}
\newtheorem{Thm}{Theorem}[section]
\newtheorem*{unThm}{Theorem}
\newtheorem{Lem}[Thm]{Lemma}
\newtheorem*{unLem}{Lemma}
\newtheorem{Cor}[Thm]{Corollary}
\newtheorem*{unCor}{Corollary}
\newtheorem{Prop}[Thm]{Proposition}
\newtheorem*{unProp}{Proposition}
\theoremstyle{definition}
\newtheorem{Rem}[Thm]{Remark}
\newtheorem{Def}[Thm]{Definition}
\newtheorem*{unDef}{Definition}
\newtheorem{Not}[Thm]{Notation}
\newtheorem{Ex}[Thm]{Example}
\newenvironment{claim}[1]{\par\noindent\underline{Claim:}\space#1}{}
\newenvironment{claimproof}[1]{\par\noindent{Proof:}\space#1}
\def \Rr {\ensuremath{\mathbb R }}
\def \Qq {\ensuremath{\mathbb Q }}
\def \Zz {\ensuremath{\mathbb Z }}
\def \Nn {\ensuremath{\mathbb N }}
\def \Ff {\ensuremath{\mathbb F }}
\def \Kk {\ensuremath{\mathbb{K}}}
\def \Ll {\ensuremath{\mathbb{L}}}
\def \oo {\ensuremath{\mathbf{o}}}
\def \- {\ensuremath{\text{-}}}
\def \a {\ensuremath{\alpha}}
\def \b {\ensuremath{\beta}}
\def \circle {\ensuremath{\mathbb{S}^1}}
\def \d {\ensuremath{\delta}}
\def \e {\ensuremath{\varepsilon}}
\def \Fev {\text{For every}}
\def \fev {\text{for every}}
\def \g {\ensuremath{\gamma}}
\def \G {\ensuremath{\Gamma}}
\def \gms {geodesic metric space}
\def \gu {\ensuremath{\mathfrak{u}}}
\def \h {\ensuremath{\mathbb{H}^{2}}}
\def \iff {\text{if and only if}}
\def \l {\ensuremath{\lambda}}
\def \L {\ensuremath{\Lambda}}
\def \msp {\text{metric space}}
\def \npul {non-principal ultrafilter}
\def \orf {\ensuremath{\mathfrak{or}}}
\def \p {\ensuremath{\phi}}
\def \pslr {\ensuremath{\mathrm{PSL}_2(\mathbb{R})}}
\def \pslf {\ensuremath{\mathrm{PSL}_2(\mathbb{F})}}
\def \pslnr {\ensuremath{\mathrm{PSL}_n(\mathbb{R})}}
\def \pglr {\ensuremath{\mathrm{PGL}_2(\mathbb{R})}}
\def \r {\ensuremath{\rho}}
\def \s {\ensuremath{\sigma}}
\def \slr {\ensuremath{\mathrm{SL}_2(\mathbb{R})}}
\def \ssc {sequence of scalars}
\def \st {\text{such that}}
\def \symtwor {\ensuremath{\mathcal{P}^1(2,\mathbb{R})}}
\def \tes {\text{there exists}}
\def \Tes {\text{There exists}}
\def \u {\ensuremath{\mathfrak{u}}}
\newcommand{\com}[1]{}
\newcommand{\Cone}[4]{\ensuremath{\mathrm{Cone}^{#1}\left(#2,\left(#3\right),\left(#4\right)\right)}}
\newcommand{\Dist}[3]{\ensuremath{\mathrm{dist}_{#1}\left(#2 ,#3\right)}}
\newcommand{\func}[3]{\ensuremath{#1\colon #2 \rightarrow #3}}
\newcommand{\funcpair}[6]{\ensuremath{#1_{#2}: (#3_{#2},#4_{#2}) \rightarrow (#5_{#2},#6_{#2})}}
\newcommand{\norm}[1]{\ensuremath{\left\lVert#1\right\rVert}}
\newcommand {\orient}[1] {\ensuremath{#1^{\mathrm{O}}}}
\newcommand{\pair}[2]{\ensuremath{\left(#1,#2\right)}}
\newcommand{\psc}[2]{\ensuremath{\left< \, #1 \ , \ #2 \, \right>}}
\newcommand{\rsp}[1]{\ensuremath{#1_{\mathrm{cl}}^{\mathrm{RSp}}}}
\newcommand{\seg}[2]{\ensuremath{#1\text{-}#2}}
\newcommand{\cseg}[2]{\ensuremath{[#1\text{-}#2]}}
\newcommand{\segc}[1]{\ensuremath{#1^{\mathrm{sc}}}}
\newcommand{\segl}[2]{\ensuremath{(#1\text{-}#2)_\L}}
\newcommand{\set}[1]{\ensuremath{\{ \, #1 \, \}}}
\newcommand{\setb}[1]{\ensuremath{\big\{ \, #1 \, \big\}}}
\newcommand{\setB}[1]{\ensuremath{\Big\{ \, #1 \, \Big\}}}
\newcommand{\setrel}[2]{\ensuremath{\{ \, #1 \, | \, #2 \, \}}}
\newcommand{\setrelb}[2]{\ensuremath{\, \big\{ #1 \, \big| \, #2  \, \big\}}}
\newcommand{\setrelfrac}[2]{\ensuremath{ \, \left\{ #1 \, \middle| \, #2 \, \right\}}}
\newcommand{\speccl}[1]{\ensuremath{#1_{\mathrm{cl}}^{\mathrm{RSp}}}}
\newcommand{\spec}[1]{\ensuremath{#1^{\mathrm{RSp}}}}
\newcommand{\specclf}[1]{\ensuremath{( #1 )_{\mathrm{cl}}^{\mathrm{RSp}}}}
\def\DefMap#1#2#3#4#5{\begin{matrix}#1 \colon&#2&\longrightarrow &#3;
\\ &#4 &\longmapsto &#5.
\end{matrix}}
\def\Defmap#1#2#3#4#5{\begin{matrix}#1\colon&#2&\longrightarrow &#3;
\\& #4 &\longmapsto &#5,
\end{matrix}}
\def\defmap#1#2#3#4#5{\begin{matrix}#1\colon&#2&\longrightarrow &#3;
\\& #4 &\longmapsto &#5
\end{matrix}}
\def\map#1#2#3#4{\begin{matrix}#1&\longrightarrow &#2;
\\#3 &\longmapsto &#4,
\end{matrix}}
\def\mapp#1#2#3#4{\begin{matrix}#1&\longrightarrow &#2;
\\#3 &\longmapsto &#4
\end{matrix}}
\title[A relation between two compactifications of character varieties]{Real spectrum and oriented Gromov equivariant compactifications of character varieties}
\author{Victor Jaeck}
\date{\today} 
\address{Department of Mathematics, ETH Z\"{u}rich, Switzerland}
\email{victor.jaeck@math.ethz.ch}
\def\subjclassname{\textup{2020} Mathematics Subject Classification}
\let\csname subjclassname@1991\endcsname=\subjclassname
\subjclass{
22E40, %Discrete subgroups of Lie groups
14P10, %Semialgebraic sets and related spaces
20F65%Geometric group theory
}
\begin{document}

\begin{abstract}
    The character variety $\Xi$ of a finitely generated group $\Gamma$ in \ensuremath{\mathrm{PSL}_2(\mathbb{R})} has many compactifications. We construct a continuous surjection from the real spectrum compactification $\Xi^{\mathrm{RSp}}$ to the oriented Gromov equivariant compactification. Our construction is based on a geometric interpretation of the elements of $\partial \Xi^{\mathrm{RSp}}$ as $\Gamma$-actions by isometries on \Rr-trees. We endow these \Rr-trees with an orientation induced by the standard orientation on the circle, which we characterize by a semialgebraic equation. 
    Moreover, we describe the $\Gamma$-actions by orientation preserving isometries on oriented \Rr-trees, which arise in both compactifications, as limits of $\Gamma$-actions on the oriented hyperbolic plane, via asymptotic cones endowed with an ultralimit orientation. 
\end{abstract}

\maketitle

\tableofcontents

\section{Introduction}

    For a finitely generated group \G , the character variety $\Xi(\G,\pslr)$ is the quotient by~\pslr -postconjugation of the topological space of reductive representations of \G \ in \pslr , equipped with the topology of pointwise convergence. 
    This space provides a geometric point of view for studying representations of discrete groups in~$\pslr$ and unifies several key themes. It underpins Goldman and Mirzakhani’s work on the symplectic and hyperbolic geometry of moduli spaces \cite{Gthesymplecticnatureoffundamentalgroupsofsurfaces,Msimplegeodesicsandweilpetersson}; it is central to Thurston's and Culler--Morgan--Shalen's theory linking character varieties to the topology of manifolds and geometric group theory \cite{Tonthegeometryanddynamicsofdiffeomorphismsofsurfaces, CSvar, MSval}; it connects to non-abelian Hodge theory \cite{Hthe,Dtwi,Cfla,Shig}; and it bridges to modern mathematical physics \cite{FCaquantumTeichmullerspace} and the Langlands program \cite{BDqua,DPlanglandsduality}.
    
    In general, the space of representations is not compact, so there exist sequences of representations in the character variety that do not have a limit. To investigate their asymptotic behavior, we examine two compactifications of~$\Xi(\G,\pslr)$: the real spectrum compactification $\rsp{\Xi(\G,\pslr)}$ \cite{Bthe} and the oriented Gromov equivariant compactification $\orient{\Xi(\G, \pslr)}$ \cite{Wcon}. We show that the former surjects continuously onto the latter.
    
    \begin{unThm}[{Theorem \ref{Thm: there exists a continuous surjection from one compactification to the other}}]
        There exists a continuous surjection 
        \[
            \rsp{\Xi(\G,\pslr)} \rightarrow \orient{\Xi(\G,\pslr)}.
        \]
    \end{unThm}
    Boundary elements
    \[
        [\p ,\Ff] \in \rsp{\partial\Xi(\G,\pslr)}
    \]
    correspond to classes of reductive representations of \G \ in \pslf, where \Ff \ is a suitably chosen non-Archimedean real closed field satisfying some minimality condition \cite{Bthe,BIPPthereal}, see Section~\ref{Section semialgebraic model}.
    While this describes the boundary points algebraically, a central challenge raised by Wienhard at the ICM \cite{Wani} is to provide a geometric interpretation of the boundary points of this compactification.
    This text aims to move slowly towards that goal and to bridge the theory of both compactifications.
    %This text is part of a broader study of higher rank Teichm\"uller spaces through the examination of their real spectrum compactifications \cite{BIPPthereal}, which offers, for example, new insights for maximal and Hitchin representations \cite[Example 6.19]{BIPPthereal}. 
    In the following, we introduce the two compactifications and their properties, and then explain the technical results that allow us to construct the above continuous surjection.

\subsection{The oriented Gromov equivariant compactification}

    The study of the oriented Gromov equivariant compactification builds on Thurston's classification of mapping class group elements via their actions on the space of marked hyperbolic structures on a surface $S$ \cite{Guber, Tuntersuchungen} (called Teichmüller space $T(S)$ \cite{BBperspective}) and its \emph{length spectrum compactification} ${T(S)}^{\mathrm{LS}}$ \cite{THthe, MSval, CSvar}.
    Later, Bestvina and Paulin developed independently a geometric approach to this compactification \cite{Bdeg, Pthe} defining the Gromov equivariant topology on the space of \G-actions on the hyperbolic plane \h \ and on \Rr-trees, following Gromov's definition \cite{Ggro}. 
    
    We later use Paulin's description of the boundary \G-actions on \Rr-trees arising from asymptotic cones of \h, using the following classical notations (see Section \ref{Section semialgebraic model} or \cite[Section 10]{ggt}):
    The asymptotic cone 
    \[
        \Cone{\u}{\h}{\l_k}{\ast_k}
    \]
    is an \Rr-tree obtained from a non-principal ultrafilter \u, a sequence of scalars $(\l_k)$, and basepoints $(\ast_k)$. 
    Let \G \ be a finitely generated group.
    Denote by $\mathrm{lg}(\p)$ the \emph{displacement function} of a class of representations $[\p,\h] \in \Xi(\G, \pslr)$ and by $\p^\u := \lim_\u \p_k$ the \emph{ultralimit} of a sequence of representations $\func{\p_k}{\G}{\pslr}$.
    
    \begin{unProp}[{\cite[Page 434]{Psurlacompactificationdethurston}}] 
        Let \u \ be a non-principal ultrafilter on \Nn , $[\p_k,\h]$ a sequence in $\Xi(\G, \pslr)$ such that $\mathrm{lg}(\p_k)\rightarrow \infty$, and $\p^\u=\lim_\u \p_k$. 
        If $\ast_k\in \h$ is an element in \h \ achieving the infimum of the displacement function of $\p_k$ for every $k\in \Nn$ and $T^\u:=\Cone{\u}{\h}{\lg(\p_k)}{\ast_k}$, then
         \[
            \lim_\u\left[\p_k,\h\right] = [\p^\u,T^\u]
        \]
        \u-almost surely in the Gromov equivariant topology.
    \end{unProp}  
    
    Wolff proves in \cite{Wcon} that the extension of the length spectrum compactification to $\Xi(\G,\pslr)$ leads to a wild space. For instance, for a surface $S$ of genus $g\geq 2$, the space $\Xi(\pi_1(S),\pslr)$ has $4g-3$ connected components \cite[Corollary 1.2]{Wcon}, while its length spectrum compactification is connected.
    To avoid this degeneracy, Wolff defines a refinement of the Gromov equivariant topology that preserves the orientation on \h \ and allows the Euler class (a topological invariant distinguishing the connected components of $\Xi(\G,\pslr)$) to extend continuously to the compactification. This requires a notion of orientation on \Rr-trees using cyclic orders.
    A \emph{cyclic order} on a set $\Omega$ is a map 
    \[
        \func{o}{\Omega^{3}}{\set{-1,0,1}}
    \]
    satisfying a cocycle property (see Definition~\ref{Def: cyclic order}), and encoding a circular arrangement of points.
    In Subsection \ref{Subsection: construction of the oriented compactification}, we describe the standard orientation on \h \ via a cyclic order on its visual boundary $\partial_\infty \h \cong \circle$ by a semialgebraic equation: 
    \begin{unLem}[{Lemma \ref{Lem: orientation as sign of determinant}}]
        If $\mathrm{sgn}$ denotes the sign function on $\Rr$, then
        \[
            o_\Rr:\map{ \left(\circle\right) ^3}{\set{-1,0,1}}{(z_1,z_2,z_3)}
                { \mathrm{sgn}\left(\mathrm{det}(z_2-z_1, z_3-z_2)\right)}
        \]
        defines a cyclic order on $\mathbb{S}^1$, which encodes the orientation on \h.
    \end{unLem}
    an \Rr-tree $T$ is \emph{oriented} if for every $P\in T$, there is a cyclic order $or(P)$ defined on $\mathcal{G}_T(P)$, the set of \emph{germs of oriented segments} at $P$, see Subsection \ref{Subsection: cyclic orders and oriented real trees}.
    An isometry is orientation preserving if it preserves the cyclic orders at every $P\in T$, see Definition \ref{Def: orientation preserving isometry}. Denote by $\mathrm{Isom}_\mathrm{or}(T)$ the group of isometries preserving the orientation of $T$.
    Wolff defines the \emph{oriented Gromov equivariant} topology (Definition \ref{Def: oriented Gromov equivariant topology}) on a subspace of $\Xi(\G,\pslr) \cup \mathcal{T}'$ where 
    \[
        \mathcal{T}' := 
        \left\{ (\p, T,or) \,\middle|\,
        \begin{array}{l}
            T \text{ an \Rr-tree not reduced to a point,} \\
            \p\colon \G \rightarrow \mathrm{Isom}_\mathrm{or}(T) \text{ a minimal action, and}\\
            or \text{ an orientation on } T
        \end{array}
        \right\} \big/ \sim.
    \]
    and the equivalence is by orientation-preserving equivariant isometries \cite[Page 1273]{Wcon}.
    The closure of $\Xi(\G,\pslr)$ in $\Xi(\G,\pslr) \cup \mathcal{T}'$ is the oriented Gromov equivariant compactification \orient{\Xi(\G,\pslr)}, which is a first countable compact Hausdorff space.
    Based on the work of Paulin \cite{Pthe}, we describe the boundary elements of $\orient{\Xi(\G,\pslr)}$ as \G-actions on asymptotic cones, which we endow with an orientation. 
    The description of the standard orientation on \h \ allows us to construct an \emph{ultralimit orientation} on the asymptotic cones of \h . 

    \begin{unThm}[{Theorem \ref{Thm: The limit orientation is an orientation}}] 
        Let \u \ be a non-principal ultrafilter on \Nn , $(\l_k)\subset \Rr$ a sequence such that $\l_k\rightarrow \infty$, and $(\ast_k) \subset \h$ a sequence of basepoints. Let 
        \[
            T^\u := \Cone{\u}{\h}{\l_k}{\ast_k}
        \]
        be the corresponding asymptotic cone, and $[P_k]^\u \in T^\u$. 
        Using an identification between germs of oriented segments at $[P_k]^\u$ and elements $[x_k]^\u \in T^\u$ representing directions, define the map $\func{or^\u([P_k]^\u)}{(\mathcal{G}_{T^\u}([P_k]^\u))^3}{\set{-1,0,1}}$ by
        \[
            \left({[x_k]^\u},{[y_k]^\u},{[z_k]^\u}\right) \mapsto 
            \begin{cases}
              0 & \text{if } \mathrm{Card}\set{{[x_k]^\u},{[y_k]^\u},{[z_k]^\u}} \leq 2, \\
              \lim_\u or(P_k)({x_k},{y_k},{z_k}) & \text{otherwise},
            \end{cases}
        \]
        where $or(P_k)$ is the cyclic order on $\mathcal{G}_{\h}(P_k)$ given by the standard orientation on \h.
        Then $or^\u([P_k]^\u)$ defines a cyclic order on $\mathcal{G}_{T^\u}([P_k]^\u)$.
    \end{unThm}

    This in turn allows us to give a characterization of the limit of sequences in \orient{\Xi(\G,\pslr)} in terms of asymptotic cones.

    \begin{unThm}[{Theorem \ref{Thm: Wolff limit is the ultralimit}}]
        Let \u \ be a non-principal ultrafilter on \Nn \ and $[\p_k,\h,or]$ a sequence in $\Xi(\G, \pslr)$ such that $\mathrm{lg}(\p_k)\rightarrow \infty$. For each $k\in \Nn$, let $\ast_k$ be a point of $\h$ that realizes the infimum of the displacement function of $\p_k$ and set $\p^\u=\lim_\u \p_k$. 
        If $T_{\p^\u}^\u$ is the $\p^\u$-invariant minimal subtree inside the asymptotic cone $\Cone{\u}{\h}{\lg(\p_k)}{\ast_k}$, then
        \[
            \lim[\p_k,\h,or] = [\p^\u,T_{\p^\u}^\u,or^\u] \in \partial\orient{\Xi(\G, \pslr)}
        \]
        \u-almost surely, where $or^\u$ is the restriction to $T_{\p^\u}^\u$ of the ultralimit orientation defined in Theorem \ref{Thm: The limit orientation is an orientation}.
    \end{unThm}

    This result shows that the oriented Gromov equivariant compactification is described in terms of asymptotic cones. To compare \orient{\Xi(\G, \pslr)} with the real spectrum compactification, we associate to every element of the real spectrum compactification an oriented \Rr-tree and show that this oriented \Rr-tree is also well described using asymptotic cones.
    
\subsection{The real spectrum compactification}
    
    Introduced by Coste and Roy in \cite{CClespectrereeletlatopologie, CRlatopologieduspectrereel}, the real spectrum compactification applies to semialgebraic sets and preserves the structure determined by the equalities and inequalities that define them. In other words, the semialgebraic properties that characterize interior points extend naturally to points at infinity.
    A field $\Ff$ is a \emph{real field} if it is endowed with an order compatible with its field operations. Moreover, \Ff \ is \emph{real closed} if it has no proper algebraic ordered field extension, see Subsection \ref{Subsection: preliminaries in real algebraic geometry}. As an example, one may consider $\Ff = \Rr$ in what follows. 
    Using techniques from real algebraic geometry, see Subsection \ref{Subsection: definition of the real spectrum compactification}, one can define the real spectrum compactification 
    \[
        \rsp{\Xi(\G,\pslr)}
    \]
    which is a compact Hausdorff metrizable space that contains $\Xi(\G,\pslr)$ as an open and dense subset.
    It offers a real counterpart to the spectrum in algebraic geometry and serves as a central object of study in real algebraic geometry \cite{BCRrea}.
    We are interested in the following characterization of $\partial \rsp{\Xi(\G,\pslr)}$:

    \begin{unDef}[{Definition \ref{Def: minimal real closed field}}]
        Given a representation \func{\p}{\G}{\pslf}, the real closed field \Ff \ is \emph{\p-minimal} if \p \ can not be \pslf-conjugated into a representation \func{\p'}{\G}{\mathrm{PSL}_2(\mathbb{L})}, where $\mathbb{L} \subset \Ff$ is a proper real closed subfield.
    \end{unDef}

    If $\func{\p}{\G}{\mathrm{PSL}_2(\Ff)}$ is reductive and $\Ff$ is real closed, a minimal real closed field always exists and is unique \cite[Corollary 7.9]{BIPPthereal}. If \(\Ff_1\) and \(\Ff_2\) are two real closed fields, we say that two representations \((\p_1, \Ff_1)\) and
    \((\p_2, \Ff_2)\) are equivalent if there exists a real closed field morphism \(\psi \colon \Ff_1 \to \Ff_2\) such that 
    \[
        \psi \circ \p_1 \quad \text{is } \mathrm{PSL}_2(\Ff_2)\text{-conjugated to} \quad \p_2.
    \]
    \begin{unThm}[{\cite[Theorem 1.1 and Corollary 7.9]{BIPPthereal}}]
        Elements in the boundary of $\rsp{\Xi(\G, \pslr)}$ are in bijective correspondence with equivalence classes of pairs $[\p, \Ff]$, where \p \ is a reductive representation
        \[
            \func{\p}{\G}{\mathrm{PSL}_2(\Ff)},
        \]
        and $\Ff$ is real closed, non-Archimedean and \p-minimal. 
    \end{unThm}

    For any representative $(\p, \Ff)$ of a class of representations in $\partial \rsp{\Xi(\G, \pslr)}$, we study the induced \G-action on the non-Archimedean hyperbolic plane~$\h(\Ff)$, which one equips with a well described pseudo-metric, see Subsection \ref{Subsection: real tree associated to the real spectrum}.
    The quotient 
    \[
        T\Ff := \h(\Ff)/ \setb{\mathrm{dist}_{\h(\Ff)} = 0}
    \]
    is a \L-tree \cite[Theorem 28]{Btree} which by completing the segments produces an \Rr-tree $\segc{T\Ff}$ on which $\p(\G)$ acts by isometries and without global fixed points, see Subsection \ref{Subsection: real tree associated to the real spectrum}. By \cite[Proposition 2.4]{Pthe}, there exists, up to isometry, a unique \p-minimal invariant subtree
    \[
        T_\p\subset \segc{T\Ff},
    \]
    which we endow with an orientation.
    Brumfiel gives in \cite[Proposition 41]{Btree} an explicit bijective correspondence between the set $\mathcal{G}_{\h(\Ff)}(P)$ of germs of oriented segments at $P\in \h(\Ff)$ and $\circle(\Ff_\mathcal{O})$. Here, $\Ff_{\mathcal{O}}$ denotes the quotient field of the valuation ring of \Ff, which is itself a real closed field (see Remark \ref{Rem: computation of the orientation from circle(F)}).
    By the transfer principle and Lemma \ref{Lem: orientation as sign of determinant}, the $\Ff_\mathcal{O}$-extension of $o_\Rr$ gives a cyclic order on $\mathcal{G}_{\h(\Ff)}(P)$ for every $P\in \h(\Ff)$. 

    \begin{unCor}[{Corollary \ref{Cor: Cyclic order on the space of ends of TF}}]
        The cyclic order $or_{\Ff_{\mathcal{O}}}$ from Lemma \ref{Lem: orientation as sign of determinant} defines a cyclic order on $\mathcal{G}_{\segc{T\Ff}}(P)\cong \circle(\Ff_{\mathcal{O}})$ for every $P\in \segc{T\Ff}$. 
    \end{unCor}
    Consider $T_\p$ with the orientation $or_\p$ induced by the restriction to $T_\p$ of the orientation of $\segc{T\Ff}$. We show in Theorem \ref{Thm: valuation compatible field monomorphism implies embedding of trees} that $[\p,T_\p,or_\p]$ does not depend on the choice of representative of the class $[\p,\Ff]\in \partial\rsp{\Xi(\G,\pslr)}$.
    \begin{unLem}[{Lemma \ref{Lem: associated oriented real tree for an element in the real spectrum}}]
        Each class $[\p,\Ff] \in \partial\rsp{\Xi(\G,\pslr)}$ defines a canonical \G -action by orientation preserving isometries on a \p-minimal oriented \Rr-tree $[\p,T_\p,or_\p]$, up to \G-equivariant orientation preserving isometries.
    \end{unLem}

    An essential result to prove Theorem \ref{Thm: there exists a continuous surjection from one compactification to the other} is the following accessibility result \cite{BIPPthereal} that allows us to represent any element of $\partial\rsp{\Xi(\G,\pslr)}$ by a representation of \G \ in $\mathrm{PSL}_2(\Rr_\mu^\u)$, where $\Rr_\mu^\u$ is a Robinson field, see Example \ref{Example real closed fields}. 
    We refer to Subsection \ref{Subsection: asymptotic cones for the real spectrum} for the notations.
    Let $F$ be a finite generating set of $\G$.
    In the following theorem, given \u \ a non-principal ultrafilter, the sequence of scales $(\mu_k)$ is \emph{well adapted} to a sequence of representations \func{\p_k}{\G}{\pslr} if there exists $c_1,c_2\in \Rr_{>0}$ such that for \u -almost every $k\in \Nn$
    \[
        c_1(\mu_k) \leq \sum_{\g \in F}\left(\mathrm{tr}\left(\p_k(\g)\p_k(\g)^T \right)\right) \leq c_2(\mu_k). 
    \]
    If only the second inequality holds, then the sequence $(\mu_k)$ is \emph{adapted}.
    In this setting, if $\mu=(\mu_k)$, we denote by $\func{\p_\mu^\u}{\G}{\mathrm{PSL}_2(\Rr_\mu^\u)}$ the $(\u,\mu)$-limit representation 
    \[
        \p_\mu^\u(\g)=\begin{pmatrix}
            (\p_k(\g)^{1,1})_k & (\p_k(\g)^{1,2})_k \\
            (\p_k(\g)^{2,1})_k & (\p_k(\g)^{2,2})_k
        \end{pmatrix} \in \mathrm{PSL}_2(\Rr_\mu^\u).
    \]
    \begin{unThm}[{\cite[Theorem 7.16]{BIPPthereal}}]
        Let \u \ be a non-principal ultrafilter on \Nn, $(\p_k,\Rr)_k \in \mathcal{M}_\G(\Rr)$ and $(\p_{\mu}^{\u}, \Rr_{\mu}^{\u})$ its $(\u,\mu)$-limit representation for an adapted sequence of scales $\mu := (\mu_k)$. Then:
        \begin{itemize}
            \item $\p_\mu^{\u}$ is reductive, and
            \item if $\mu$ is well adapted, infinite, and $\Ff_{\p_\mu^{\u}}$ denotes the $\p_\mu^{\u}$-minimal field, then $(\p_\mu^{\u}, \Rr_\mu^{\u})$ is $\mathrm{SO}_2(\Rr_\mu^{\u})$-conjugate to a representation $(\p, \Ff_{\p_\mu^{\u}})$ that represents an element in $\partial \rsp{\Xi(\G,\pslr)}.$
        \end{itemize}
        Conversely, any element in $\partial \rsp{\Xi(\G,\pslr)}$ arises in this way.
    \end{unThm}

    As in the length spectrum compactification, this permits to describe the trees associated to elements of $\partial \rsp{\Xi(\G,\pslr)}$ using asymptotic cones. The following is a consequence of \cite[Theorem 5.10 and Lemma 5.12]{BIPPthereal}, where $(\h)^\u_\lambda$ is the ultralimit of the rescaled copies of $\h$ by a sequence of scalars $\l:= \l_k$ and \u \ a non-principal ultrafilter.

    \begin{Cor}[{Corollary \ref{Cor: Gamma equivariant isometry between robinson tree and asymptotic cone}}]
        The map 
        \[
            \defmap{\Psi}{(\h)^\u_\lambda}{\h\left(\Rr^\u\right)}{\left(x+iy\right)_k}{(x_k)+i(y_k)}
        \]
        induces an isometry between the asymptotic cone $T^\u:=\Cone{\u}{\h}{\l_k}{0}$ and $\h(\Rr^\u_\mu) / \set{\mathrm{dist}_{\h(\Rr^\u_\mu)}=0} =: T\Rr_\mu^\u$. 
        Moreover, with the above notations, the isometry is \G -equivariant for the induced \G -actions by $\p^\u=\lim_\u \p_k$ on $T^\u$ and by $\p_\mu^\u$ on $T\Rr_\mu^\u$. 
    \end{Cor}

    As in \orient{\Xi(\G,\pslr)}, we can enhance this description via asymptotic cones to take account of the orientation. We describe the oriented \Rr-tree associated to any element of $\partial\rsp{\Xi(\G,\pslr)}$ using asymptotic cones and ultralimit orientations.

    \begin{unThm}[{Subsection \ref{Subsection: asymptotic cones for the real spectrum} and Theorem \ref{Thm: the ultralimit orientation is the same as the robinson orientation}}]
        Let \u \ be a non-principal ultrafilter on \Nn , $(\p_k,\Rr)_k$ a sequence in $\mathrm{Hom}_{\mathrm{red}}(\G,\pslr)$ such that $\lg(\p_k) \rightarrow \infty$, and $\ast_k$ a point of $\h$ that realizes the infimum of the displacement function of $\p_k$ for each $k\in \Nn$. 
        Consider
        \[
            T^\u := \Cone{\u}{\h}{\lg{\p_k}}{\ast_k}
        \] endowed with the limit orientation as defined in Theorem \ref{Thm: The limit orientation is an orientation}.        
        Let also $\mu := (e^{\lg(\p_k)})$, $T\Rr_\mu^\u$ the \Rr-tree associated to $\Rr_\mu^\u$, which we endow with its orientation $or_{\Rr_\mu^\u}$, and $(\p_{\mu}^{\u}, \Rr_{\mu}^{\u})$ its $(\u,\mu)$-limit representation. 
        Then
        \[
            \func{\Psi}{T^\u}{T\Rr_\mu^\u}
        \]
         is an orientation preserving isometry which is \G-equivariant for the actions induced by $\p^\u=\lim_\u \p_k$ and $\p_\mu^{\u}$.
    \end{unThm}

    This theorem allows us to compare the real spectrum compactification with the oriented Gromov equivariant compactification of the character variety defined by Wolff in \cite{Wcon}.
    
\subsection{Relationship between the compactifications and open questions}
    
    By associating an oriented \Rr-tree to every element of $\partial\rsp{\Xi(\G,\pslr)}$, one obtains a map between the two compactifications.
    \begin{unDef}[{Definition \ref{Def: construction of the map between compactifications}}]
        \[
            \defmap{\beth}{\rsp{\Xi\left(\G,\pslr\right)}}{\Xi\left(\G,\pslr\right) \cup \mathcal{T}'}{\left[\p,\Ff\right]}{\left[\p,X,or_\p\right]=
            \begin{cases}
                \left[\p,\mathbb{H}^2,or\right] & \text{ if } \Ff=\Rr \\
                \left[\p,T_\p,or_\p\right]     & \text{ otherwise}.
            \end{cases}}
        \]
        Here $\mathcal{T}'$ is a space of \G-actions on oriented \Rr-trees, see Subsection \ref{Subsection: cyclic orders and oriented real trees}.
    \end{unDef}
    
    Using the accessibility result (Theorem \ref{Thm: real spectrum as representation in robinson fields}) and the uniqueness of the minimal invariant \Rr-tree up to isometry, we obtain the continuity of $\beth$.

    \begin{unLem}[{Lemma \ref{Lem: Continuity of the map}}]
        The map $\beth$ from Definition \ref{Def: construction of the map between compactifications} is continuous.
    \end{unLem}
    
    As a consequence, a topological argument based on the density of $\Xi(\G,\pslr)$ in both compactifications allows us to upgrade the function to a continuous surjection.
    
    \begin{unThm}[{Theorem \ref{Thm: there exists a continuous surjection from one compactification to the other}}]
        The map $\func{\beth}{\rsp{\Xi(\G,\pslr)}}{\orient{\Xi(\G,\pslr)}}$ from Definition \ref{Def: construction of the map between compactifications} is a continuous surjection.
    \end{unThm}
    
    Moreover, the oriented Gromov equivariant compactification of the character variety surjects continuously on its length spectrum compactification \cite{Wcon}. Thus, Theorem \ref{Thm: there exists a continuous surjection from one compactification to the other} provides a new construction of the continuous surjection between the real spectrum and the length spectrum compactifications of $T(S)$ \cite{Bthe}. The following diagram summarizes the relationships between the compactifications discussed, where $\Gamma$ is finitely generated and the inclusion $T(S)^{\mathrm{LS}}\hookrightarrow \Xi(\Gamma,\mathrm{PSL}_2(\mathbb{R}))^{\mathrm{LS}}$ holds only when $\Gamma=\pi_1(S)$:
    % https://q.uiver.app/#q=WzAsNCxbMSwwLCJUKFMpXntcXG1hdGhybXtMU319Il0sWzEsMSwiXFxYaShcXEdhbW1hLFxcbWF0aHJte1BTTH1fMihcXG1hdGhiYntSfSkpXntcXG1hdGhybXtMU319Il0sWzAsMywiXFxyc3B7XFxYaShcXEdhbW1hLFxcbWF0aHJte1BTTH1fMihcXG1hdGhiYntSfSkpfSJdLFsyLDMsIlxcb3JpZW50e1xcWGkoXFxHYW1tYSxcXG1hdGhybXtQU0x9XzIoXFxtYXRoYmJ7Un0pKX0iXSxbMiwxLCJcXGNpdGVbUHJvcG9zaXRpb24gNy4yXXtCdGhlfSIsMCx7InN0eWxlIjp7ImhlYWQiOnsibmFtZSI6ImVwaSJ9fX1dLFsyLDMsIlxcdGV4dHtUaGVvcmVtfSBcXHJlZntUaG06IHRoZXJlIGV4aXN0cyBhIGNvbnRpbnVvdXMgc3VyamVjdGlvbiBmcm9tIG9uZSBjb21wYWN0aWZpY2F0aW9uIHRvIHRoZSBvdGhlcn0iLDIseyJzdHlsZSI6eyJib2R5Ijp7Im5hbWUiOiJkYXNoZWQifSwiaGVhZCI6eyJuYW1lIjoiZXBpIn19fV0sWzMsMSwiXFxjaXRlW1Byb3Bvc2l0aW9uIDMuMjJde1djb259IiwyLHsic3R5bGUiOnsiaGVhZCI6eyJuYW1lIjoiZXBpIn19fV0sWzAsMSwiIiwwLHsic3R5bGUiOnsidGFpbCI6eyJuYW1lIjoiaG9vayIsInNpZGUiOiJ0b3AifX19XV0=
    \[\begin{tikzcd}
	    & {T(S)^{\mathrm{LS}}} \\
	    & {\Xi(\Gamma,\mathrm{PSL}_2(\mathbb{R}))^{\mathrm{LS}}} \\
	    \\
	    {\rsp{\Xi(\Gamma,\mathrm{PSL}_2(\mathbb{R}))}} && {\orient{\Xi(\Gamma,\mathrm{PSL}_2(\mathbb{R}))}}
	    \arrow[hook, from=1-2, to=2-2]
	    \arrow["{\text{\cite{Bthe}}}", two heads, from=4-1, to=2-2]
	    \arrow["{\text{Theorem} \ref{Thm: there exists a continuous surjection from one compactification to the other}}"', dashed, two heads, from=4-1, to=4-3]
	    \arrow["{\text{\cite{Wcon}}}"', two heads, from=4-3, to=2-2]
    \end{tikzcd}\] 
    %These results raise the following question.
    %\begin{Que}
     %   Can we compute the homology of $\orient{\Xi(\G,\pslr)}$ using the homology theory of \rsp{\Xi(\G,\pslr)}?
    %\end{Que}

    \com{\begin{Rem}
        As a final note, we mention the Weyl chamber length compactification introduced in \cite{Pcom}, which generalizes the length spectrum compactification to higher rank character varieties. 
        Given that boundary points of the Weyl chamber length compactification correspond to \G -actions on affine buildings, a natural construction would be to endow these affine buildings with an ''orientation" to mimic the oriented Gromov equivariant compactification. 
        There exists also a continuous surjection from the real spectrum compactification to this space. 
    \end{Rem}}
    
    %\begin{unQue}
     %   Can we endow the affine buildings that appear in the boundary of the Weyl chamber length compactification of the character variety with a natural orientation? 
    %\end{unQue}

    \subsection{Organization}
    Section \ref{Section semialgebraic model} introduces the necessary background in the theory of asymptotic cones and real algebraic geometry and establishes the relevant notation. Using Richardson--Slodowy theory, we recall a model of the character variety $\Xi(\G,\pslr)$ as a semialgebraic set. 
    
    Section~\ref{Section oriented compactification} recalls the definitions of cyclic orders and oriented \Rr-trees. We remind the construction of the oriented Gromov equivariant compactification of the character variety \orient{\Xi(\G,\pslr)} using the oriented Gromov equivariant topology. We prove that this topology is first countable, and describe the \G -actions by orientation preserving isometry on oriented \Rr-trees as \G-actions by isometries on asymptotic cones of the hyperbolic plane endowed with a limit orientation.
    
    Section \ref{Section real spectrum} recalls the construction of the real spectrum compactification of the character variety \rsp{\Xi(\G,\pslr)}. We use a description of elements in $\partial\rsp{\Xi(\G,\pslr)}$ as representations of \G \ in \pslf \ for some well-chosen real closed field \Ff, and a description of the standard orientation on \h \ via a semialgebraic equation to associate to any boundary element a \G-action by isometries preserving the orientation on an oriented \Rr-tree. We finally use an accessibility result involving Robinson fields to characterize the constructed oriented \Rr-trees using asymptotic cones of the hyperbolic plane endowed with a limit orientation.
    
    Finally, Section \ref{Section construction of the map} constructs a surjective continuous map from \rsp{\Xi(\G,\pslr)} to \orient{\Xi(\G,\pslr)} using the above descriptions of \G-actions by orientation preserving isometries on \Rr-trees as limits of \G-actions on the oriented hyperbolic plane, and a density argument.

    \addtocontents{toc}{\SkipTocEntry}
    \subsection*{Acknowledgments}

    I would like to thank Raphael Appenzeller, Segev Gonen Cohen and Johannes Schmitt for their helpful comments on this text, and Xenia Flamm and Maxime Wolff for their interesting discussions. I would also like to thank an anonymous referee for their careful reading leading to many improvements for this text.

\section{Asymptotic cones and semialgebraic models for character varieties} \label{Section semialgebraic model}

    This section provides the preliminary material for this text. We review the necessary theory of asymptotic cones, emphasizing that the asymptotic cones of the hyperbolic plane are \Rr-trees. We then recall the foundations of real algebraic geometry, including the definition of semialgebraic sets and their Euclidean topology. Finally, we present the classical example of a semialgebraic set: the character variety, using the theory of minimal vectors.

\subsection{Asymptotic cones}\label{Subsection: Asymptotic cones}

    We recall briefly the classical theory of asymptotic cones necessary for this text, as presented for example in \cite[Chapter 10]{ggt} and \cite{Drutuasymptoticcones}. Roughly speaking, the asymptotic cone of a metric space gives a picture of the metric space as “seen from infinitely far away”. It was introduced by Gromov in \cite{GRgro}, and formally defined in \cite{VWgro} to construct a limit to a family of rescaled metric spaces.

    \begin{Def}
        A \emph{filter} $\u$ on a set $I$ is a collection of subsets of $I$ satisfying:
        \begin{enumerate}
            \item $\varnothing \notin \u$, \label{noempty}
            \item If $I_1,I_2 \in \u$ then $I_1 \cap  I_2 \in \u$,            \label{finiteintersection}
            \item If $I_1 \in \u$ and $I_1 \subset I_2 \subset I$, then $I_2 \in \u$.
        \end{enumerate} 
        An \emph{ultrafilter} on $I$ is a filter $\u$ such that \fev \ $J \subset I$ either $J \in \u$ or $I \backslash J \in \u$.
    \end{Def}

    For simplicity, in the rest of this text, all ultrafilters are on \Nn \ and are \emph{non-principal}. That is, they contain the set of complements of finite subsets \cite[Proposition $10.16$]{ggt}.
    Let \u \ be a non-principal ultrafilter on \Nn. A subset $J \in 2^\Nn$ occurs \emph{\u -almost surely} if $J \in \u$.
    
    \begin{Def}
        Let \u \ be a non-principal ultrafilter on \Nn, $X$ a topological space, and $\func{f}{\Nn}{X}$ a map.
        The \emph{ultralimit} of $f$ is an element $x \in X$ \st 
        \[ 
            \forall x\in U \text{ open, then } f^{-1}(U) \in \u.
        \]
        The ultralimit of $f$ is denoted by $\lim_\u f$.
    \end{Def} 

    A first reason to introduce ultrafilters is to give a limit to any sequence on a compact topological space.

    \begin{Prop}[{\cite[Lemma 10.25]{ggt}}] \label{Prop: the ultralimit exists and is unique} 
        Let $X$ be a topological space, \u \ a \npul \ on \Nn \  and $f: \Nn \rightarrow X$ a map. 
        \begin{enumerate}
            \item If $X$ is compact, then $f$ has an ultralimit,
            \item  If $X$ is Hausdorff, then the ultralimit, if it exists, is unique.
        \end{enumerate}
    \end{Prop}

    \begin{Rem} \label{Rcompactification}
        The standard order on~$\Rr_{>0}$ extend to an order on $[0,\infty]$ by setting $a<\infty$ for all $a \in \Rr_{>0}$. Then, $[0,\infty]$ with the total order topology is compact.
    \end{Rem}

    Let $(X_k)$ be a sequence of metric spaces and \u \ a \npul \ on~\Nn. Define a pseudo-distance on the product by
    \[
        \defmap{\mathrm{dist}_\gu }{\prod\limits_{k \in \Nn} X_k \times \prod\limits_{k \in \Nn} X_k}{[0,\infty]}{\pair{(x_k)}{(y_k)}}{\lim_\u{(k \mapsto \text{dist}_{X_k}(x_k,y_k))}.}
    \]
    Then, the \emph{ultralimit metric space} is 
    \[
        (X^\u, \text{dist}_{\gu}):=\left(\prod\limits_{k \in \Nn} X_k,\text{dist}_\gu\right) \Big/ \sim,
    \] 
    where $(x_k)\sim (y_k)$ \iff \ $\text{dist}_\u((x_k),(y_k))=0$.

    \begin{Not}
        Given a sequence of elements $(x_k)$, where $x_k \in X_k$ \fev \ $k \in \Nn$, denote its equivalence class in $X^\u$ by $[x_k]^\u$.
    \end{Not}
    
    If the spaces $X_k$ do not have uniformly bounded diameter, the ultralimit $X^\u$ decomposes into many components of points at mutually finite distance, where two elements in different components are at infinite distance one from the other. 
    In order to pick one of these components, we consider \emph{pointed metric spaces} $(X,\ast)$ where $\ast \in X$ is a \emph{base point}. For a family of pointed \msp s $(X_k,\ast_k)$, the sequence of base points $(\ast_k)$ defines a base point $[\ast_k]^\u \in X^\u$, and we set 
    \[
        X^\u_{[\ast_k]^\u}:=\setrel{[y_k]^\u \in X^\u}{\Dist{\u}{[y_k]^\u}{[\ast_k]^\u} < \infty}.
    \]
    
    \begin{Def}
        The \emph{pointed ultralimit} of the sequence $(X_k,\ast_k)$ is 
        \[
            \lim_\u{(X_k,\ast_k)}=\left(X^\u_{[\ast_k]^\u},[\ast_k]^\u\right)=:(X^{\u},[\ast_k]).
        \]
        Denote also elements $[x_k]^\u$ by $[x_k]$ when the notation is clear.
    \end{Def}

    For two sequences of pointed metric spaces $(X_k,\ast_k),(X'_k,\ast'_k)$ and a sequence of maps \funcpair{f}{k}{X}{\ast}{X'}{\ast'} such that $\lim_\u \Dist{X_k}{f_k(\ast_k)}{\ast'_k} < \infty$, the \emph{ultralimit} of $(f_k)$ is
    \[
        \DefMap{f^\u}{\left(X^\u,[\ast_k]\right)}{\left({X'}^\u,\left[\ast'_k\right]\right)}{[y_k]}{[f_k(y_k)]}
    \]
    If in addition, \u-almost every $f_k$ is an isometric embedding, then $f^\u$ is an isometric embedding \cite[Lemma 10.48]{ggt}. We now study more in depth the limit of rescaled copies of metric spaces as in \cite[Chapter 10]{ggt}. 
    
    \begin{Def}
        Let $(\l_k) \subset \Rr$ be a sequence such that $\l_k \rightarrow \infty$. The \emph{asymptotic cone} of a \msp \ $X$ with respect to the \emph{\ssc} $(\l _k)$, the sequence of \emph{observation centers} $(\ast_k)$ and the \npul \ \gu \ is 
        \[
            \Cone{\u}{X}{\l_k}{\ast_k}:= \lim_\u (X_{\l_k},\ast_k),
        \]
        where $X_{\l_k}$ is the metric space $X$ endowed with the rescaled distance $(\l_k)^{-1} \cdot \mathrm{dist}_X$.
    \end{Def} 

    An \emph{\Rr-tree} $T$ is a geodesic metric space which is $0$-hyperbolic in the sense of Gromov \cite[Lemma 11.30]{ggt}. 

    \begin{Thm}[{\cite[Chapter 2, Section 1, Proposition 11]{GDHsurlesgroupeshyperboliques}}] \label{hyp-tree} 
        If a \gms \ $X$ is hyperbolic, then every asymptotic cone of it is an \Rr-tree.
    \end{Thm}

    In the next sections, we study the limits of actions by isometries on the hyperbolic plane. The theory of asymptotic cones allows us to characterize these limits as actions by isometries on \Rr-trees, which is central in the theory of compactification of character varieties.

\subsection{Preliminaries in real algebraic geometry} \label{Subsection: preliminaries in real algebraic geometry}
    Our approach to the theory of character varieties is based on real algebraic geometry. In particular, our main objects of study are semialgebraic sets defined over real closed fields. To set up this framework, we first introduce key definitions and notation, following \cite[Section 1 and 2]{BCRrea} and \cite[Section 2]{BIPPthereal}.

    \begin{Def}
        A field $\mathbb{K}$ is \emph{ordered} if there exists a total order $\leq$ compatible with its field operations. Formally, $\leq$ satisfies: for every $a, b, c \in \mathbb{K}$
        \[
            \text{if } a \leq b, \text{ then } a + c \leq b + c \quad \text{and} \quad \text{if } 0 \leq a, b, \text{ then } 0 \leq ab.
        \]
    \end{Def}
    A \emph{real field} is a field that can be ordered.
    A stronger notion is that of \emph{real closed field} $\mathbb{K}$, which is a real field that has no proper algebraic ordered field extension. Equivalently, this means that every positive element has a square root, and every polynomial of odd degree has a root in $\mathbb{K}$ \cite[Theorem 1.2.2]{BCRrea}. 
    The following theorem sheds a little more light on the nature of real closed fields. It gives an equivalent definition, which is highlighted by the Transfer principle ---a result of the Tarski--Seidenberg principle. 

    \begin{Thm}[Transfer principle {\cite[Proposition 5.2.3]{BCRrea}}] \label{Thm transfer principle}
        Let $\Kk$ be a real closed field, $\Psi$ a formula in the first-order language of ordered rings with parameters in~$\Kk$ without a free variable, and~$\Ff$ a real closed extension of~$\Kk$. Then $\Psi$ holds true in~$\Kk$ \iff \ it holds true in \Ff .
    \end{Thm}

    Real closed fields possess the same elementary theory as the reals. They give a natural framework to extend results from \Rr \ to more general fields which are real~closed. 

    \begin{Ex} \label{Example real closed fields}
        We are particularly interested in the following examples of real closed fields.
        \begin{enumerate}[label=(\roman*)]
            \item The field $\overline{\Qq}^r$ of real algebraic numbers and the field \Rr \ of real numbers are real closed fields \cite[Example 1.3.6]{BCRrea}. 
            \item Another example of a real field is given by the \emph{real Puiseux series}
            \[
                \setrelfrac{\sum_{k=-\infty}^{k_0} c_k x^{\frac{k}{m}}}{k_0, m \in \mathbb{Z}, \, m > 0 , \, c_k \in \Rr, \, c_{k_0}\neq 0},
            \]
            which, if it is endowed with the order such that $\sum_{k=-\infty}^{k_0} c_k x^{\frac{k}{m}} > 0$ if $c_{k_0} > 0$, is real closed.
            \item Consider a field \Kk \ and a non-principal ultrafilter \u \ on \Nn . 
            Define the \emph{hyper \Kk -field} as $\Kk^\u := \Kk^\Nn / \sim $, where the equivalence relation is given by $(x_k) \sim (y_k )$ \iff \ the two sequences coincide \u -almost surely. 
            If \Kk \ is an ordered field, then $\Kk^\u$ is also an ordered field. Indeed, the order is determined by 
            \[
                [x_k]^\u>0 \ \text{\iff} \ x_k>0 \ \u \text{-almost     surely}.
            \]
            Moreover, it is a real closed field if \Kk \ itself is real closed. It also has \emph{positive infinite elements}, that is elements larger than any integer. Fields with positive infinite elements are called \emph{non-Archimedean}. For such an infinite element $\mu$, define
            \begin{align*}
                \mathcal{O}_\mu &:=\setrelb{x\in \Kk^\u}{x<\mu^k \text{ for some } k\in \Zz}, \\
                \mathcal{J}_\mu &:=\setrelb{x\in \Kk^\u}{x<\mu^k \text{ \fev \ } k\in \Zz},
            \end{align*}
            where $\mathcal{J}_\mu$ is a maximal ideal inside the subring $\mathcal{O}_\mu$ of the hyper \Kk -field \cite{LRnon}. Now, the \emph{Robinson field} associated to the non-principal ultrafilter \u \ and the infinite element $\mu$ is the quotient 
            \[
                \Kk_\mu^\u  := \mathcal{O}_\mu/\mathcal{J}_\mu.
            \]
            This is necessarily a non-Archimedean field, as all rational numbers are smaller than $\mu$.
        \end{enumerate}
    \end{Ex}

    Let $\mathbb{L}$ be an ordered field. An ordered algebraic extension $\Ff$ is the \emph{real closure} of $\mathbb{L}$ if \Ff \ is a real closed field and the ordering on $\mathbb{L}$ extends to the ordering of~\Ff . The real closure of $\mathbb{L}$ always exists and is unique up to a unique order preserving isomorphism over \Ll. That is, if \( \Ff_1 \) and \( \Ff_2 \) are real closures of \Ll, there exists a unique order preserving isomorphism \( \Ff_1 \to \Ff_2 \) that is the identity on~\Ll. Denote the real closure of \Ll \ by:
    \[
        {\overline{\mathbb{L}}}^r.
    \]

    In real algebraic geometry, the fundamental objects of study are real algebraic and semialgebraic sets. Intuitively, algebraic sets consist of points satisfying polynomial equations, while semialgebraic sets allow polynomial inequalities as well.
\begin{Def}
        Let $\mathbb{K}$ be a real closed field. A set $V\subset \Kk^n$ is an \emph{algebraic set} defined over \Kk \ if there exists $B \subset \mathbb{K}[x_1, \dots, x_n]$ such that
        \[
            V := \left\{ v \in \Kk^n \mid f(v) = 0 \quad \forall f\in B \right\}.
        \]
        Then, $S\subset \Kk^n$ is a \emph{semialgebraic set} defined over \Kk \ if there exists polynomials $f_i,g_j \in \mathbb{K}[x_1, \dots, x_n]$ such that
        \[
            S := \bigcup_{finite}\bigcap_{finite} \left\{ s \in \Kk^n \mid f_i(s) = 0 \right\} \cap \left\{ s \in \Kk^n \mid g_j(s) > 0 \right\}.
        \]
    Denote by $I(S):= \setrel{f\in \mathbb{K}[x_1,\dots,x_n]}{f(s)=0\ \forall s\in S}$ the ideal of polynomials vanishing on $S$.
\end{Def}

For the remainder of this section, let $\mathbb{K}$ be a real closed field and $\mathbb{F}$ a real closed extension of $\mathbb{K}$.

\begin{Def}\label{Def: coordinate ring}
    Let $S\subset \Kk^n$ be a semialgebraic set.
    The \emph{coordinate ring} of the \Ff-extension of $S$ is
    \[
        \Ff[S]:=\Ff[x_1,\ldots,x_n]/I(S).
    \]
\end{Def}

Consider a semialgebraic set $S$ defined over $\mathbb{K}$. The \emph{$\mathbb{F}$-points} of $S$ are the solutions in $\mathbb{F}^n$ of the polynomials defining $S$
\[
    S(\Ff):=\left\{ s \in \Ff^n \mid f(v) = 0 \quad \forall s\in B \right\},
\]
where $B \subset \Kk[x_1,\ldots,x_n]$ is a set of polynomials defining $S$. It can be shown that the set $S(\Ff)$ is independent of the choice of $B$ \cite[Proposition 5.1.1]{BCRrea}.
By abuse of notation, we use the same symbol for the \Ff-points and the $\Ff$-extension of $S$.
To define a topology on $S(\mathbb{F})$, introduce the norm $N: \mathbb{F}^n \to \mathbb{F}_{\geq 0}$, given by
\[
    N(s) := \sqrt{\sum_{i=1}^n s_i^2} \quad \forall s=(s_1,\ldots , s_n)\in \Ff^n.
\]
The \emph{open ball centered at $s \in \Ff^n$ with radius $r\in \Ff_{\geq 0}$} is then defined as
\[
    B(s, r) := \setrel{ y \in \Ff^n}{N(s - y) < r}.
\]
These open balls form a basis for the \emph{Euclidean topology} on $\mathbb{F}^n$.
Finally, to study the links between algebraic sets we will use maps between semialgebraic sets with good algebraic properties.
\begin{Def}
    A map $\func{\pi}{S_1}{S_2}$ between semialgebraic sets $S_1 \subset \Kk^n$, $S_2 \subset \Kk^m$ is \emph{semialgebraic} if its graph is a semialgebraic subset of \( \Kk^n \times \Kk^m \).
\end{Def}

\begin{Prop}[{\cite[Proposition 5.3.1]{BCRrea}}]
    Let $S_1 \subset \Kk^n$, $S_2 \subset \Kk^m$ be semialgebraic sets and $\func{\pi}{S_1}{S_2}$ a semialgebraic map with graph $X$. The \Ff-extension $X(\Ff)$ is the graph of a semialgebraic map $\pi_\Ff$ called the \emph{$\Ff$-extension} of~$\pi$.
\end{Prop}
    
    We defined our primary objects of study: semialgebraic sets and semialgebraic maps over real closed fields.
    In the next subsection, we construct some algebraic and semialgebraic examples that are of particular interest to us. 

\subsection{Character varieties and minimal vectors}\label{Subsection minimal vectors and character varities}
    This section recalls the classical theory of character varieties of a finitely generated group \G \ in \pslr, based on reductive representations and their quotient by postconjugation by the target group. Using the theory of minimal vectors, we construct a semialgebraic model of the character variety, which serves as the foundation for defining its real spectrum compactification in Section~\ref{Section real spectrum}.
    This introduction to character varieties is inspired by~\cite{Fcha}.
    We begin with the following proposition, which realizes \pslr \ as a semialgebraic subgroup of $\mathrm{GL}_4(\Rr)$. This allows us to use the language of linear representations throughout the remainder of the section.

    \begin{Prop}\label{Proposition: psl is a semialgebraic set}
        The group \pglr \ is an algebraic subset of $\Rr^{16}$ and \pslr \ is a semialgebraic subset of~\pglr.
    \end{Prop}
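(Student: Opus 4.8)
The plan is to realise both groups as subsets of $\Rr^{16}=M_4(\Rr)$ through the conjugation action of $\mathrm{GL}_2(\Rr)$ on the four--dimensional $\Rr$--algebra $M_2(\Rr)$. For $g\in\mathrm{GL}_2(\Rr)$, let $c_g\colon X\mapsto gXg^{-1}$ denote the resulting linear automorphism of $M_2(\Rr)\cong\Rr^4$. Then $g\mapsto c_g$ is a group homomorphism $\mathrm{GL}_2(\Rr)\to\mathrm{GL}_4(\Rr)\subset M_4(\Rr)$, and $c_g=\mathrm{id}$ exactly when $g$ is central in $M_2(\Rr)$, that is when $g\in\Rr^{*}I$. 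Hence this homomorphism factors through an injection $\iota\colon\pglr\hookrightarrow\mathrm{GL}_4(\Rr)$, and I regard $\pglr$ as the subset $\iota(\pglr)$ of $\Rr^{16}$.

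To prove the first assertion I would show that $\iota(\pglr)$ is Zariski closed in $\Rr^{16}$, by identifying it with the set
\[
 V=\bigl\{\,\phi\in M_4(\Rr)\ :\ \phi(I)=I\ \text{ and }\ \phi(XY)=\phi(X)\phi(Y)\ \text{ for all }X,Y\in M_2(\Rr)\,\bigr\},
\]
each $\phi\in M_4(\Rr)$ being read as an $\Rr$--linear endomorphism of $M_2(\Rr)$. The inclusion $\iota(\pglr)\subseteq V$ is immediate, since every $c_g$ is a unital algebra automorphism. Conversely, if $\phi\in V$ then $\ker\phi$ is a two--sided ideal of the simple ring $M_2(\Rr)$ and $\phi(I)=I\neq 0$, so $\phi$ is injective, hence bijective by equality of dimensions, hence an $\Rr$--algebra automorphism of $M_2(\Rr)$; by the Skolem--Noether theorem it is inner, so $\phi=c_g$ for some $g\in\mathrm{GL}_2(\Rr)$. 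Finally $V$ is cut out by polynomial equations in the $16$ matrix entries of $\phi$: in the basis $E_{11},E_{12},E_{21},E_{22}$ of $M_2(\Rr)$, the condition $\phi(I)=I$ is linear, and, multiplicativity being bilinear, it is equivalent to the finitely many identities $\phi(E_{ij}E_{kl})=\phi(E_{ij})\phi(E_{kl})$ for $i,j,k,l\in\{1,2\}$, which have degree at most $2$ in the entries. Therefore $\pglr=V$ is an algebraic subset of $\Rr^{16}$.

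For the second assertion, observe that $\det(\lambda g)=\lambda^{2}\det g$, so the sign of the determinant is a well--defined function on $\pglr$, and $\pslr$, which is the image of $\slr$ under $\iota$, equals $\{\,c_g : \det g>0\,\}$. This subgroup is connected, being the continuous image of the connected group $\slr$, and it is open of index two in $\pglr$: its complement is the coset $c_{g_0}\,\pslr$ with $g_0=\mathrm{diag}(1,-1)$, which is open as well. An open subgroup is closed, so $\pslr$ is an open, closed, connected subset of $\pglr$ containing the identity, hence it is precisely the identity component of $\pglr$. As the connected components of a semialgebraic set are semialgebraic, $\pslr$ is a semialgebraic subset of $\pglr$ (and of $\Rr^{16}$). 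Alternatively and more explicitly: $\iota$ preserves the splitting $M_2(\Rr)=\Rr I\oplus\mathfrak{sl}_2(\Rr)$, the restriction $c_g|_{\mathfrak{sl}_2(\Rr)}$ lies in the orthogonal group of the signature $(2,1)$ quadratic form $X\mapsto\det X$, and the condition $\det g>0$ translates into this restriction preserving a fixed connected component of the cone $\{\det<0\}\subset\mathfrak{sl}_2(\Rr)$, which is a single polynomial inequality in the entries of $\phi$.

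The only step that is not routine is the surjectivity in the second paragraph, namely that every unital $\Rr$--algebra automorphism of $M_2(\Rr)$ is inner; this is exactly the Skolem--Noether theorem (equivalently, it says that the adjoint representation realises $\mathrm{PGL}_2$ as a closed subgroup of $\mathrm{GL}_4$ already over $\Rr$, with no obstruction to surjectivity on real points since the kernel is trivial). Everything else is linear algebra together with the standard fact that a semialgebraic set has finitely many, semialgebraic, connected components; the one point to handle with care is phrasing the defining conditions of $V$ as genuine polynomials in the $16$ coordinates, which works because multiplicativity can be tested on a basis.
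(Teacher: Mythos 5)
Your proof is correct and follows essentially the same route as the paper: realise $\pglr$ via the adjoint (conjugation) action as the automorphism group of $M_2(\Rr)$, invoke Skolem--Noether, and obtain $\pslr$ as a connected component of the resulting algebraic set, hence semialgebraic. The only difference is that you spell out the detail the paper leaves implicit, namely that the set of \emph{unital algebra endomorphisms} of the simple ring $M_2(\Rr)$ is cut out by polynomial equations alone and that every such endomorphism is automatically bijective, so no separate invertibility inequality is needed.
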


    \begin{proof}
        The algebra $M_{2 \times 2}(\Rr)$ of $2$ by $2$ matrices with real coefficients is central and simple. By the Skolem--Noether Theorem \cite[Theorem 2.7.2]{GScen} (originally in \cite{Szur}), every $\Rr$-algebra automorphism of $M_n(\Rr)$ is inner. Hence the adjoint representation 
        \[
            \defmap{\mathrm{Ad}}{\pglr}{\mathrm{Aut}{\left(M_{2 \times 2}(\Rr)\right)}\subset \mathrm{GL}_{4}(\Rr)}{A}{B\mapsto ABA^{-1}}
        \]
        is an isomorphism. The condition of being an automorphism of $M_{2 \times 2}(\Rr)$ is given by finitely many algebraic equations. Thus \pglr \ is a real algebraic subset of $\mathrm{GL}_{4}(\Rr)$.
        Moreover \pslr \ is a connected component of the algebraic set \pglr \ and thus is a semialgebraic set by \cite[Theorem 2.1.11]{BCRrea}.
    \end{proof}

    As a semialgebraic subgroup of $\mathrm{GL}_4(\Rr)$, the group \pslr \ is linear.  This gives a convenient framework to define reductive representations with target group \pslr . Let~\G \ be a finitely generated group with finite generating set $F$ with $s$ elements.
        
    \begin{Def}
        A representation \func{\p}{\G}{\pslr} is \emph{reductive} if, seen as a linear representation on $\Rr^4$, it is completely reducible. That is, a direct sum of irreducible representations.
    \end{Def}
    
    Denote by $\mathrm{Hom}_{\mathrm{red}}(\G, \pslr) \subset \mathrm{Hom}(\G, \pslr)$ the subspace of reductive representations which is invariant under postconjugation by the target group.
        
    \begin{Def}
        The \emph{character variety} of the finitely generated group \G \ and the semialgebraic group $\pslr$ is the quotient of $\mathrm{Hom}_{\mathrm{red}}(\G, \pslr)$ via postconjugation by $\pslr$:
        \[
            \Xi(\G,\pslr):=\mathrm{Hom}_{\mathrm{red}}(\G, \pslr)/\pslr.
        \]
    \end{Def}
            
    \begin{Thm}[{\cite[§20, page 376, Corollaire a]{Balg}}]
        Let $\p, \p '$ be two reductive linear representations of \G \ in \pslr \ that verify 
        \[
            \mathrm{tr} \circ \mathrm{Ad}(\p(\g)) = \mathrm{tr \circ \mathrm{Ad}}(\p '(\g)) \quad \forall \g \in \G.
        \]
        Then $\mathrm{Ad}(\p)$ and $\mathrm{Ad}(\p')$ are conjugate by an element in~$\mathrm{GL}_4(\Rr)$. 
    \end{Thm}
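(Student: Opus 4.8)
The statement is the classical fact that a reductive representation is determined, up to $\mathrm{GL}_4(\Rr)$-conjugation, by its character $\g \mapsto \mathrm{tr}\,\mathrm{Ad}(\r(\g))$; it is cited from Bourbaki, so only a sketch is expected. My plan is to reduce the problem to a statement about semisimple modules over the group algebra $\Rr[\G]$ and then invoke Jordan--H\"older together with the fact that over $\Rr$ (more generally over a field of characteristic zero) a semisimple module is determined up to isomorphism by its character.

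First I would set $V = \Rr^4$ and regard $\mathrm{Ad}\circ\r$ and $\mathrm{Ad}\circ\r'$ as $\Rr[\G]$-module structures on $V$, call them $V_\r$ and $V_{\r'}$. Since $\r,\r'$ are reductive, $\mathrm{Ad}\circ\r$ and $\mathrm{Ad}\circ\r'$ are completely reducible linear representations (the adjoint of a completely reducible representation is completely reducible in characteristic zero), so $V_\r$ and $V_{\r'}$ are semisimple $\Rr[\G]$-modules. The claim that they are conjugate by an element of $\mathrm{GL}_4(\Rr)$ is exactly the claim that $V_\r \cong V_{\r'}$ as $\Rr[\G]$-modules: an $\Rr[\G]$-linear isomorphism is precisely an element $g \in \mathrm{GL}(V) = \mathrm{GL}_4(\Rr)$ with $g\,\mathrm{Ad}(\r(\g))\,g^{-1} = \mathrm{Ad}(\r'(\g))$ for all $\g$.

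Next I would decompose both modules into isotypic components. Write $V_\r \cong \bigoplus_i W_i^{\oplus m_i}$ and $V_{\r'} \cong \bigoplus_j W_j'^{\oplus n_j}$ with the $W_i$ (resp.\ $W_j'$) pairwise non-isomorphic simple $\Rr[\G]$-modules. The hypothesis gives $\mathrm{tr}_{V_\r}(\g) = \mathrm{tr}_{V_{\r'}}(\g)$ for every $\g \in \G$, i.e.\ the two virtual characters $\sum_i m_i \chi_{W_i}$ and $\sum_j n_j \chi_{W_j'}$ agree as functions on $\G$. The core step is linear independence of characters of pairwise non-isomorphic simple modules over a field of characteristic zero: this follows from the Jacobson density theorem, since for distinct simple modules $W_i$ one can produce an element of $\Rr[\G]$ acting as a chosen rank-one operator on one factor and as zero on the others, so the characters cannot satisfy a nontrivial linear relation. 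Hence the multisets $\{(W_i,m_i)\}$ and $\{(W_j',n_j)\}$ coincide, so $V_\r \cong V_{\r'}$ and we obtain the conjugating element in $\mathrm{GL}_4(\Rr)$.

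The main obstacle, and the only place where characteristic zero is genuinely used, is precisely the linear independence of irreducible characters over $\Rr$: in positive characteristic distinct simple modules can share a Brauer character, and without it the reduction fails. Everything else — complete reducibility passing to the adjoint, translating module isomorphism into matrix conjugation, and the isotypic decomposition — is formal. Since this is Bourbaki's result I would simply indicate these steps and refer to the citation for the details.
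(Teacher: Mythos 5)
The paper gives no proof of this theorem; it is stated as a citation to Bourbaki (\emph{Alg\`ebre VIII}, \S 20), so there is no in-text argument to compare against. Your sketch is a faithful reconstruction of the standard Bourbaki-style argument: pass to the semisimple $\Rr[\G]$-module structures on $\Rr^4$, decompose into isotypic components, and invoke linear independence of characters of pairwise non-isomorphic simple modules (valid over any field of characteristic zero, via Jacobson density / Artin--Wedderburn) to match multiplicities, which gives the module isomorphism and hence the conjugating element of $\mathrm{GL}_4(\Rr)$. The logic is sound, and you correctly flag characteristic zero as the only place a genuine hypothesis enters. One small remark: you say ``the adjoint of a completely reducible representation is completely reducible in characteristic zero,'' but in this paper's conventions no such step is needed---\emph{reductive} is \emph{defined} to mean that $\mathrm{Ad}\circ\r$, viewed as a $4$-dimensional linear representation, is completely reducible, so the semisimplicity of $V_\r$ and $V_{\r'}$ holds by definition rather than by a theorem about adjoints.
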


    In other words, reductive representations are, up to postconjugation, determined by their trace functions---hence the term character variety. The character variety corresponds to the maximal Hausdorff quotient of 
    $ \mathrm{Hom}(\G, \pslr)$ by postconjugation by the target group; see \cite[Théorème 23]{Pesp}, building on ideas from \cite{Wthese}.
    For any $\p \in  \mathrm{Hom}(\G, \pslr)$, define the \emph{displacement of the representation} as
    \[
        \lg(\p):=\inf_{x\in \h}\max_{s\in F}\Dist{\h}{x}{\p(\g) x},
    \]
    where \h \ denotes the ball (Poincaré) model of the hyperbolic plane, equipped with its standard metric, and \pslr \ acts via Möbius transformations.
    In the following sections, we rely on a key property of reductive homomorphisms described below.

    \begin{Prop}[{\cite[Proposition 18]{Pcom}}] \label{Proposition minimum of the displacement function}
        An element $\p \in  \mathrm{Hom}(\G, \pslr)$ is reductive \iff \ the infimum of the displacement function is achieved. That is, \tes \ $x_0 \in \h $ so that
        \[
            \max_{s\in F}\Dist{\h}{x_0}{\p(\g) x_0} \leq \max_{s\in F}\Dist{\h}{x}{\p(\g) x} \quad \forall x\in\h.
        \]
    \end{Prop}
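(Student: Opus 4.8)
This is the rank-one case of the correspondence, underlying the Richardson--Slowodowy theory of minimal vectors, between reductive representations and closed conjugation orbits, so the plan is to combine the $\mathrm{CAT}(0)$ geometry of $\mathbb{H}^2$ with the classification of subgroups of $\mathrm{PSL}_2(\mathbb{R}) = \mathrm{Isom}^+(\mathbb{H}^2)$, and to fall back on the Kempf--Ness machinery for the harder implication. Write $f(x) := \max_{s \in S_\Gamma}\mathrm{dist}_{\mathbb{H}^2}(x,\rho(s)x)$, so the claim is that $f$ attains its infimum exactly when $\rho$ is reductive. Since $\mathbb{H}^2$ is complete $\mathrm{CAT}(0)$ and $x \mapsto \mathrm{dist}_{\mathbb{H}^2}(x,gx)$ is convex and continuous for every isometry $g$, the function $f$ is convex and continuous with closed convex sublevel sets. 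I would organise the argument around the trichotomy for the image subgroup $\rho(\Gamma)$: either (a) it fixes a point of $\mathbb{H}^2$, or (b) it fixes a point of $\partial_\infty\mathbb{H}^2$ but no point of $\mathbb{H}^2$, or (c) it is non-elementary; and $\rho$ is \emph{non}-reductive precisely in the part of (b) where the fixed boundary point is unique, equivalently $\rho(\Gamma)$ is conjugate into the Borel subgroup of upper-triangular matrices but not into its diagonal torus, all the other cases being reductive (fixing a point of $\mathbb{H}^2$, preserving a geodesic line, or being Zariski dense).

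\emph{Reductive $\Rightarrow$ infimum attained.} In case (a), $f$ vanishes at a fixed point, so its infimum $0$ is attained. In case (b) with an invariant geodesic line $A$: the nearest-point projection $\pi_A$ is $1$-Lipschitz and $\rho(\Gamma)$-equivariant, so $\mathrm{dist}_{\mathbb{H}^2}(\pi_A x,\rho(s)\pi_A x)=\mathrm{dist}_{\mathbb{H}^2}(\pi_A x,\pi_A(\rho(s)x))\le \mathrm{dist}_{\mathbb{H}^2}(x,\rho(s)x)$, whence $f(\pi_A x)\le f(x)$ and $\inf f = \inf_A f|_A$; identifying $A$ with $\mathbb{R}$, each $\rho(s)|_A$ is an isometry of $\mathbb{R}$, so $f|_A$ is either constant (all $\rho(s)|_A$ translations) or proper (some $\rho(s)|_A$ a reflection), and in both cases attains its minimum. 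In case (c): given $\xi\in\partial_\infty\mathbb{H}^2$, some generator $s$ has $\rho(s)\xi\ne\xi$ (otherwise $\rho(\Gamma)$ would fix $\xi$), and the standard displacement asymptotics for isometries of $\mathbb{H}^2$ — elliptic of non-zero angle, parabolic, or loxodromic — give $\mathrm{dist}_{\mathbb{H}^2}(x_n,\rho(s)x_n)\to\infty$ whenever $x_n\to\xi$; since $\overline{\mathbb{H}^2}$ is compact, $f$ is proper, and a continuous proper function on the proper space $\mathbb{H}^2$ attains its minimum.

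\emph{Infimum attained $\Rightarrow$ reductive, and the main obstacle.} I would argue the contrapositive. Assume $\rho$ non-reductive, so $\rho(\Gamma)$ fixes a unique $\xi\in\partial_\infty\mathbb{H}^2$ and nothing else. Along any geodesic ray $r$ aimed at $\xi$, each function $t\mapsto\mathrm{dist}_{\mathbb{H}^2}(r(t),\rho(s)r(t))$ is non-increasing — every generator, fixing $\xi$, is the identity, parabolic at $\xi$, or loxodromic with $\xi$ an axis endpoint — with limit the translation length $\ell(\rho(s))$; hence $f\circ r$ is non-increasing with limit $L_0:=\max_s\ell(\rho(s))$, while $f\ge L_0$ everywhere, so $\inf f = L_0$. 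The difficulty, and the step I expect to be the crux, is to conclude that $L_0$ is \emph{not} attained: $f$ is already constant along $r$, so convexity alone is useless and one must genuinely exploit that $\rho(\Gamma)$ fixes no point of $\mathbb{H}^2$ and preserves no geodesic. Here I would invoke the Richardson--Slowodowy/Kempf--Ness theory via the adjoint embedding $\mathrm{PSL}_2(\mathbb{R})\hookrightarrow\mathrm{GL}_4(\mathbb{R})$ of the previous proposition: a representation into a reductive group is reductive iff its conjugation orbit is Zariski closed, and a closed orbit is exactly one containing a minimal vector, i.e.\ meeting the minimum set of the associated (strictly convex) Kempf--Ness function on the symmetric space $\mathrm{PSL}_2(\mathbb{R})/\mathrm{SO}(2)=\mathbb{H}^2$; and when $\rho$ is non-reductive one checks directly that the orbit is not closed — after conjugating so that $\rho(\Gamma)\subseteq\{z\mapsto az+b\}$ with some generator having $b\ne0$, conjugating by $z\mapsto\lambda z$ and letting $\lambda\to0$ pushes $\rho$ to its ``diagonal part'', a representation outside the orbit — so that no minimal vector exists. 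The one technical point to settle with care is the relation between the bare displacement $f$ occurring in the statement and the strictly convex Kempf--Ness function that actually controls minimal vectors (the robust argument naturally concerns a function such as $x\mapsto\sum_s\mathrm{dist}_{\mathbb{H}^2}(x,\rho(s)x)^2$); in the Richardson--Slowodowy formalism developed in this paper this identification is exactly what is available, and it is from that formalism, rather than from pure $\mathbb{H}^2$-geometry, that I would extract the non-attainment in the non-reductive case.
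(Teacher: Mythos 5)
The paper does not prove this statement; it imports it verbatim from \cite[Proposition 18]{Pcom} and later uses only the implication ``reductive $\Rightarrow$ the infimum is attained'' (to obtain the basepoints $z_k$ in Theorem \ref{Thm: Wolff limit is the ultralimit}). Your argument for that implication is sound: case (a) is trivial, case (c) follows from properness, and the $1$-Lipschitz equivariant projection onto an invariant geodesic handles case (b). One cosmetic remark: as worded, your trichotomy omits the dihedral-type elementary subgroups that preserve a geodesic but fix no point of $\mathbb{H}^2\cup\partial_\infty\mathbb{H}^2$; your projection argument in (b) already covers them, so it is only the case division that should be restated.

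The converse direction, though, has a genuine gap and your appeal to Richardson--Slodowy/Kempf--Ness does not close it. The Kempf--Ness function is a \emph{strictly} convex energy such as $x\mapsto\sum_{s}\mathrm{d}(x,\rho(s)x)^2$ or the pullback of the Frobenius norm along the orbit map; the equivalence ``minimum attained $\Leftrightarrow$ closed orbit $\Leftrightarrow$ reductive'' belongs to that setting, and the bare function $f(x)=\max_{s}\mathrm{d}(x,\rho(s)x)$ is convex but far from strictly so and genuinely behaves differently. For instance, take $\Gamma=F_2=\langle a,b\rangle$ with $S_\Gamma=\{a,b\}$, $\rho(a)=\mathrm{diag}(2,1/2)$ (hyperbolic with axis the imaginary line and translation length $2\ln 2$) and $\rho(b)=\left(\begin{smallmatrix}1&1\\0&1\end{smallmatrix}\right)$ (parabolic, sharing the endpoint $\infty$ of that axis). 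Then $\rho(\Gamma)$ fixes only $\infty$ in $\partial_\infty\mathbb{H}^2$ and no interior point, so $\rho$ is non-reductive; yet for $y\ge 1$ one has $\mathrm{d}(iy,4iy)=2\ln 2$ while $\mathrm{d}(iy,iy+1)<2\ln 2$, so $f(iy)=2\ln 2=\inf f$ is attained. So the contrapositive as you sketch it fails for the $\max$-displacement with this generating set, and ``the one technical point to settle with care'' is exactly where the argument breaks; the strict-convexity heuristic cannot simply be transferred. To prove the statement as intended one must consult Parreau's actual argument (or reformulate the displacement appropriately) rather than invoke the Kempf--Ness dictionary for a function to which it does not directly apply.
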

    
    Using the theory of minimal vectors introduced by Richardson--Slowdowy \cite{RSmin} (on ideas of Kempf--Ness and studied in the case of real reductive Lie groups in \cite{BLrea}), we recall that $\Xi(\G,\pslr)$ is a semialgebraic set. See also~\cite[Section 6, Section 7]{BIPPthereal} for a more general treatment. Throughout the remainder of this section, we look at~\pslr \ as a semialgebraic subset of~$M_{4 \times 4}(\Rr)$ using the adjoint representation (see Proposition \ref{Proposition: psl is a semialgebraic set}).
    The semialgebraic group \pslr \ acts by conjugation on the real vector space $M_{4 \times 4}(\Rr)^{F}$, which is endowed with the $\mathrm{SO}_2(\Rr)$-invariant scalar product 
    \[
        \psc{(A_1,\ldots,A_s)}{(B_1,\ldots,B_s)} := \sum_{i=1}^{s}\mathrm{tr}\left(A_i^{T} B_i\right).
    \]
    Denote by $\norm{\cdot}$ its associated norm. 
    The set of \emph{minimal vectors} of $M_{4\times 4}(\Rr)^{F}$ for the \pslr -action by conjugation is
    \[
        \mathcal{M}:=\setrelb{v\in M_{4\times 4}(\Rr)^{F}}{\norm{g.v}\geq \norm{v} \ \fev \ g\in \pslr}. 
    \]
    This defines a closed subset of $M_{4\times 4}(\Rr)^{F}$, and we show that it defines an algebraic set. Consider the involution $\sigma: g \mapsto (g^T)^{-1}$ on \pslr \ which sends~$g$ to the inverse of its transpose. 
    Then, $\mathrm{SO}_2(\Rr)$ is the subgroup of fixed points of~$\sigma$ and 
    \[
        sym^0_2(\Rr) := \setrelfrac{A\in M_{2\times 2}(\Rr)}{\mathrm{tr}(A)=0, A=A^T}
    \]
    is the $-1$ eigenspace of the Cartan involution $d_{\mathrm{Id}}\sigma$ on the Lie algebra of \pslr . 
    This eigenspace also acts on $M_{4\times 4}(\Rr)^{F}$ by
    \begin{align*}
        Z. (A_1,\ldots,A_s) &= \frac{d}{dt}_{|_{t=0}}\exp{(tZ)}(A_1,\ldots,A_s)\exp{(-tZ)} \\
        & =\left( [Z,A_1], \ldots , [Z,A_s]\right) \quad \forall Z\in sym^0_2(\Rr).
    \end{align*}
    In this context, the following result from the Richardson--Slowdowy theory of minimal vectors holds.
    \begin{Thm}[{\cite[Theorem 4.3]{RSmin}}]
        An element $v\in M_{4\times 4}(\Rr)^{F}$ is in $\mathcal{M}$ if and only if $\psc{Z.v}{v}= 0$ \fev \ $Z \in sym^0_2(\Rr)$.
    \end{Thm}
    Using the description of the action of $sym^0_2(\Rr)$ on $M_{4\times 4}(\Rr)^{F}$ and, in the last equality, that $[A,A^T]\in sym_4^0(\Rr)$ for all $A \in M_{4 \times 4}(\Rr)$, we have
    \begin{align*}
        \mathcal{M} & = \setrelfrac{(A_1,\ldots,A_s)\in M_{4\times 4}(\Rr)^{F}}{\mathrm{tr}\left( \sum_{i=1}^s [Z,A_i]^TA_i \right) = 0 \ \forall Z\in sym^0_2(\Rr)} \\
        & = \setrelfrac{(A_1,\ldots,A_s)\in M_{4\times 4}(\Rr)^{F}}{\mathrm{tr}\left( \sum_{i=1}^s \left[A_i,A_i^T\right]Z \right) = 0 \ \forall Z\in sym^0_2(\Rr)} \\
        & = \setrelfrac{(A_1,\ldots,A_s)\in M_{4\times 4}(\Rr)^{F}}{\sum_{i=1}^s\left[A_i,A_i^T\right]=0}.
    \end{align*}
    In particular, $\mathcal{M}$ is an algebraic set defined over $\overline{\Qq}^r$. 
    
    We now turn to the connection between the space of minimal vectors as described above and the set of representations of \G \ in \pslr. 
    Using Proposition \ref{Proposition: psl is a semialgebraic set} and the adjoint representation, we study the representation space within the real vector space $M_{4 \times 4}(\Rr)^{F}$ using the evaluation of morphisms on the generating set $F$:
    \[
         \DefMap{ev}{\mathrm{Hom}(\G,\mathrm{PSL}_{2}(\Rr))}{\mathrm{PSL}_{2}(\Rr)^{F} \subset M_{4 \times 4}(\Rr)^{F}}{\r}{(\r(\g))_{\g\in F}}
    \]
    The evaluation map is injective and its image $R^{F}(\G,\mathrm{PSL}_{2}(\Rr))$ is a closed real algebraic subset of $M_{4 \times 4}(\Rr)^{F}$.
    The group \pslr \ acts by conjugation on both $\mathrm{Hom}(\G,\mathrm{PSL}_{2}(\Rr))$ and $M_{4 \times 4}(\Rr)^{F}$.
    Moreover, the evaluation map is \pslr -equivariant with respect to these actions and so its image is \pslr -invariant. From \cite[Theorem 30]{Scharactervarieties} (following an argument in \cite{JMdeformationspaces}), the restriction to reductive homomorphisms has image 
    \begin{equation}\label{Equation: reductive representation are the closed orbits}
        R^{F}_{red}(\G,\mathrm{PSL}_{2}(\Rr))=\setrelb{v\in R^{F}(\G,\mathrm{PSL}_{2}(\Rr))}{\mathrm{PSL}_{2}(\Rr).v \text{ is closed}}.
    \end{equation}
    The link with minimal vectors comes from the following result.
    
    \begin{Thm}[{\cite[Theorem 4.4]{RSmin}}]
        If $v\in M_{4 \times 4}(\Rr)^{F}$, then the intersection $\pslr . v \cap \mathcal{M}$ is not empty if and only if $ \pslr . v $ is closed.
    \end{Thm}

    Hence, the set
    \[
        \mathcal{M}_\G:=\mathcal{M}\cap R_{red}^{F}(\G,\pslr)=\mathcal{M}\cap R^{F}(\G,\pslr)
    \] 
    is a closed algebraic subset of $M_{4 \times 4}(\Rr)^{F}$.
    Now, using results in the study of quotients by compact Lie groups \cite[Subsection 7.1]{RSmin} (using \cite{Ssmoothfunctions}), the quotient $\mathcal{M}_\G/\mathrm{SO}_2(\Rr)$ is homeomorphic to a closed semialgebraic set. 
    
    \begin{Thm}[{\cite[Theorem 7.7]{RSmin}}]
        The inclusion $ \mathcal{M}_{\G} \subseteq R_{red}^{F}(\G,\pslr)$ induces a homeomorphism between $\mathcal{M}_\G/\mathrm{SO}_2(\Rr)$ and the topological quotient 
        \[
            R_{red}^{F}(\G,\pslr)/\pslr.
        \]
    \end{Thm}

    This theorem and Equation (\ref{Equation: reductive representation are the closed orbits}) proves the wanted result.

    \begin{Thm}[{\cite[Section 7.1]{RSmin}}]\label{Thm: semialgebraic model for the character variety}
        The character variety of a finitely generated group \G \ in \pslr \ is a semialgebraic set which is homeomorphic to $\mathcal{M}_\G(\Rr)/\mathrm{SO}_2(\Rr)$.
    \end{Thm}

    \begin{Rem}[{\cite[Lemma 3.7]{Fcha}}]
        A semialgebraic model for the character variety is unique up to semialgebraic isomorphism. That is, if 
        \[
            \func{\p}{\Xi(\G,\pslr)}{\Rr^n} \quad \text{and} \quad \func{\p'}{\Xi(\G,\pslr)}{\Rr^m}
        \]
        are two semialgebraic models, then \tes \ a unique
        semialgebraic isomorphism $\func{f}{\mathrm{Im}(\p)}{\mathrm{Im}(\p')}$ with $f\circ \p = \p'$. 
    \end{Rem} 

    It is worth noting that the character variety is a semialgebraic set in the more general case when \pslr \ is replaced by $G$ a connected semisimple algebraic
    group defined over the reals, see \cite[Section 3]{Fcha}.           
    One aspect of our study is to define the real spectrum compactification of $\Xi(\G,\pslr)$, which requires the character variety to be a semialgebraic set, see Section \ref{Section real spectrum}. 

\section{The oriented Gromov equivariant compatification} \label{Section oriented compactification}

    This section reviews the construction of the oriented Gromov equivariant compactification \orient{\Xi(\G,\pslr)} of the character variety $\Xi(\G,\pslr)$ as introduced by Wolff in \cite{Wcon}. We provide the necessary notation to define oriented \Rr-trees, and recall that points of $\partial \orient{\Xi(\G,\pslr)}$ are \G -actions by orientation preserving isometries on oriented \Rr-trees. 
    We recall the construction of the oriented Gromov equivariant topology and its invariance under small perturbations.
    Finally, we use the theory of asymptotic cones to describe the \G -actions on oriented \Rr-trees that appear on $\partial \orient{\Xi(\G,\pslr)}$ as limits of \G-actions on the oriented hyperbolic plane.

\subsection{Cyclic orders and oriented \Rr-trees} \label{Subsection: cyclic orders and oriented real trees}
    Following \cite{Wcon}, we recall the definitions of cyclic orders, which formalizes orientations on general sets, and of oriented \Rr-trees. For \Rr-trees with extendible segments, this is equivalent to defining a coherent cyclic order on the visual boundary of the \Rr-tree. Finally, we examine the space of minimal actions by orientation preserving isometries on \Rr-trees.
        
    \begin{Def} \label{Def: cyclic order}
        Let $\Omega$ be a set. A \emph{cyclic order} on $\Omega$ is a function \func{o}{\Omega^{3}}{\set{-1,0,1}} such that
        \begin{enumerate}
            \item \fev \ $z_1,z_2,z_3\in \Omega$: $o(z_1,z_2,z_3)=0$ \iff \ $\mathrm{Card}\set{z_1,z_2,z_3}\leq 2$,
            \item \fev \ $z_1,z_2,z_3\in \Omega,$: $o(z_1,z_2,z_3)=o(z_2,z_3,z_1)=-o(z_1,z_3,z_2)$,
            \item \fev \ $z_1,z_2,z_3,z_4\in \Omega,$ if $o(z_1,z_2,z_3)=1=o(z_1,z_3,z_4)$, then 
            \[
                o(z_1,z_2,z_4)=1.
            \]
        \end{enumerate}
    \end{Def}

    A cyclic order is equivalent to defining an alternating $2$-cocycle from $\Omega^3$ to \set{-1,0,1} which is zero \iff \ the triple of points of $\Omega$ is not composed of $3$ distinct points. 
    Cyclic orders allow for the definition of orientations on \Rr-trees, thereby introducing the concept of oriented \Rr-trees.

    \begin{Def}
        Let $\kappa>0$, $X$ be a $\mathrm{CAT}(-\kappa)$ space and $P\in X$. A \emph{germ of oriented segments} at $P$ in $X$ is an equivalence class of nondegenerate oriented segments based at $P$ for the following equivalence relation: two oriented segments $\seg{P}{x},\seg{P}{y}$ are equivalent \iff  
        \[
        \exists\e > 0 \ \st \ \seg{P}{x}|_{[0,\e)} = \seg{P}{y}|_{[0,\e)}.
        \]
        Denote by $\mathcal{G}_X(P)$ the set of germs of oriented segments at $P$ in $X$ and by \cseg{P}{x} an equivalence class of oriented segment. 
    \end{Def} 

    \begin{Rem}
        In \cite{Wcon}, the space $X$ is more generally a Gromov hyperbolic space. Since \h \ and \Rr-trees are $\mathrm{CAT}(-\kappa)$ spaces, the restriction to $\mathrm{CAT}(-\kappa)$ enables a simpler definition of the visual boundary of $X$ later on.
    \end{Rem}
        
    \begin{Def}\label{Def: orientation on a real tree}
        An \emph{orientation} \emph{of an \Rr-tree} $T$ is the data, \fev \  $P \in T$, of a cyclic order~$or(P)$ defined on~$\mathcal{G}_T(P)$. An \Rr-tree equipped with an orientation is called an \emph{oriented \Rr-tree}. Denote by $\mathrm{Or}(T)$ the set of orientations on $T$.
    \end{Def}

    \begin{Def}\label{Def: orientation preserving isometry}
        Let $(T, or)$ and $(T', or')$ be two oriented \Rr-trees and $h$ an isometry between $T$ and $T'$. Then, $h$ defines at each point~$P \in T$ a bijection \func{\mathcal{G}h_P}{\mathcal{G}_T(P)}{\mathcal{G}_{T'}(h(P))}. We say that $h$ \emph{preserves the orientation} of $T$ if for every $P \in T$ and every triple of germs of oriented segments $([x], [y], [z]) \in \mathcal{G}_T(P)$
        \[
            or'(h(P))(\mathcal{G}h_P([x]),\mathcal{G}h_P([y]),\mathcal{G}h_P([z])) = or(P)([x],[y],[z]).
        \]
        That is, the following diagram is commutative:
        % https://q.uiver.app/?q=WzAsNSxbMCwwLCJcXG1hdGhjYWx7R30oeCleMyJdLFsxLDBdLFswLDEsIlxcbWF0aGNhbHtHfShoKHgpKV4zIl0sWzIsMCwiXFx7LTEsMCwxXFx9LiJdLFsxLDFdLFswLDIsIlxcbWF0aGNhbHtHfWhfeF4zIiwyXSxbMCwzLCJvKHgpIl0sWzIsMywibycoaCh4KSkiLDJdXQ==
        \[\begin{tikzcd}
            {\mathcal{G}_T(P)^3} & {} & {\{-1,0,1\}.} \\
            {\mathcal{G}_{T'}(h(P))^3} & {}
            \arrow["{(\mathcal{G}h_P)^3}"', from=1-1, to=2-1]
            \arrow["{or(P)}", from=1-1, to=1-3]
            \arrow["{or'(h(P))}"', from=2-1, to=1-3]
        \end{tikzcd}\]
        The set of isometries of $T$ which preserve the orientation $or$ forms a subgroup of $\mathrm{Isom}(T)$, denoted $\mathrm{Isom_{or}}(T) $. Finally, an action of a group on an oriented \Rr-tree $T$ \emph{preserves the orientation} if it takes its values in the orientation preserving group of isometries of $T$.
    \end{Def}
    
    As in \cite[Page 1269]{Wcon}, if $P\in T$, denote by $\mathrm{Trip}(P)$ the set of pairwise distinct triples of germs of oriented segments starting at $P$, and set $\mathrm{Trip}(T) := \cup_{P\in T}\mathrm{Trip}(P)$.
    Let $\cseg{P}{x}, \cseg{P}{y}, \cseg{P}{z}$ be three pairwise distinct germs of oriented segments starting at $P$. The corresponding element in $\mathrm{Trip}(P)$ is denoted by $\mathrm{Trip}(P,x,y,x)$ and called \emph{germs of tripods} of $T$. With this notation, an orientation of $T$ is a function \func{or}{\mathrm{Trip}(T)}{\set{-1,1}} verifying: 
    \begin{align*}
         & or(\mathrm{Trip}(P,x,y,z))=or(\mathrm{Trip}(P,z,x,y))=-or(\mathrm{Trip}(P,x,z,y)), \\
         & \text{if } or(\mathrm{Trip}(P,x,y,z))=1=or(\mathrm{Trip}(P,x,z,w)) \text{ then } or(\mathrm{Trip}(P,x,y,w))=1.
    \end{align*}
    Then $\mathrm{Or}(T)$ is a closed subspace of ${\set{-1,1}}^{\mathrm{Trip}(T)}$ when endowed with the product topology.
    In the next subsection, we use a correspondence between an \Rr-tree $T$ endowed with an orientation, and $T$ endowed with a specific cyclic order on its visual boundary $\partial_\infty T$. This correspondence holds for \Rr-trees with extendible segments.
    
    \begin{Def}
        An \Rr-tree has \emph{extendible segments} if every oriented segment is the initial segment of some ray.
    \end{Def}
    If $r$ is a ray in $T$, its initial segment defines a germ of oriented segment.
    \begin{Def}
         A \emph{germ of rays} in $T$ is an equivalence class of rays, for the relation of defining the same germ of oriented segment.
    \end{Def}

    To avoid any ambiguity regarding equivalence classes, we use the fact that $X$ is a $\mathrm{CAT}(-\kappa)$ space to identify the visual boundary $\partial_\infty X$ with the set of geodesic rays starting at $P$. This correspondence is a bijection (see \cite[Proposition II.8.2]{BH}).
    
    \begin{Def}
        Let $T$ be an \Rr-tree. We say that a cyclic order~$\oo$ on $\partial_\infty T$ is \emph{coherent} if \fev~$P\in T$ and every pairwise distinct triple $([a], [b], [c])$ of germs of rays starting at $P$, the element $\oo(a, b, c)$ does not depend on the chosen representatives $a, b, c$ of $[a], [b], [c]$.
    \end{Def}

    \begin{Ex}[{\cite[page 1269]{Wcon}}]
        For instance, in the following configuration
        \[\begin{tikzpicture}[scale=0.7]
            % Two central points
            \fill (-1,1) circle (1.5pt);
            \fill (1,1) circle (1.5pt);

            \node[left] at (-0.25,1.5) {$P$};
  
            % Horizontal line connecting the two points
            \draw[thick] (-1,1) -- (1,1);

              % Two lines from left point (-1,1) - one at 135° and one at 225°
            \draw[thick] (-1,1) -- (-2.5,2);
            \draw[thick] (-1,1) -- (-2.5,-0);
  
            % Two lines from right point (1,1) - one at 45° and one at 315°
            \draw[thick] (1,1) -- (2.5,2);
            \draw[thick] (1,1) -- (2.5,-0);
  
            % Label the rays
            \node at (-2.7,2.2) {$c$};
            \node at (-2.7,-0.2) {$d$};
            \node at (2.7,2.2) {$b$};
            \node at (2.7,-0.2) {$a$};

            \end{tikzpicture}
        \]
        a total cyclic order $\mathbf{o}$ on the boundary $\set{a,b,c,d}$ is coherent if and only if: 
        \[
            \mathbf{o}(a,b,c)=\mathbf{o}(a,b,d) \quad \text{and} \quad \mathbf{o}(a,c,d)=\mathbf{o}(b,c,d).
        \]
    \end{Ex}

    The set of coherent cyclic orders on $\partial_\infty T$ is a subspace of ${\set{-1,0,1}}^{(\partial_\infty T)^3}$, which we endow with the product topology. With this topology, one can show the correspondence between cyclic orders on $\partial_\infty T$ and orientation on $T$. 

    \begin{Rem}[{\cite[Page 1270]{Wcon}}]\label{Rem: intersection of 3 rays}
        For every pairwise distinct triple $(a, b, c) \in (\partial_\infty T)^{3}$, the intersection of the rays is a point denoted
        \[
            P_{abc}:=\seg{a}{b} \cap \seg{b}{c} \cap \seg{a}{c} \in T.
        \]
    \end{Rem}
    
    \begin{Prop}[{\cite[Proposition 3.8]{Wcon}}] \label{orientation on the boundary gives an orientation on the set}
        Let $T$ be an \Rr-tree with extendible segments. The map \func{\mathrm{Push}}{\mathrm{Or}(T)}{{\set{-1,0,1}}^{(\partial_\infty T)^3}} that sends $or \in \mathrm{Or}(T)$ to 
        \[
        \begin{aligned}
            \mathrm{Push}(or)\colon  & (\partial_{\infty} T)^3 \longrightarrow \{-1, 0, 1\}; \\
            & (a,b,c) \longmapsto
            \begin{cases}
              0 & \text{if } \mathrm{Card}(a, b, c) \leq 2, \\
              or(\cseg{P_{abc}}{a},\, \cseg{P_{abc}}{b},\, \cseg{P_{abc}}{c}) & \text{otherwise}
            \end{cases}
        \end{aligned}
        \]
        is a homeomorphism onto its image, where $\cseg{P_{abc}}{a}$ denotes the germ of oriented segments defined by the ray between $P_{abc}$ and $a$.
    \end{Prop}    

    Let \G \ be a finitely generated group with finite generating set $F$. 
    The action of \G \ by isometries on an \Rr-tree $T$ is \emph{minimal}
    if $T$ has no \G -invariant subtree distinct from $\varnothing$ and $T$. 
    Using \cite[Proposition 3.9]{Wcon}, the following quotient up to equivariant isometry preserving the orientation is a set \cite[Page 1273]{Wcon}:
    \[
        \mathcal{T}' := 
        \left\{ (\p, T,\oo) \,\middle|\,
        \begin{array}{l}
            T \text{ an \Rr-tree not reduced to a point,} \\
            \p\colon \G \rightarrow \mathrm{Isom}_\mathrm{or}(T) \text{ a minimal action, and}\\
            \oo \ \text{a coherent cyclic order on } \partial_\infty T
        \end{array}
        \right\} \big/ \sim.
    \]
    We also define its subset:
    \begin{equation*} 
        \mathcal{T}^{o} := \setrelfrac{[\p, T,\oo] \in \mathcal{T}'}{
        \begin{array}{l}
        \min\limits_{x \in T} \max\limits_{\gamma \in F} \Dist{T}{x}{\p(\gamma)x} = 1 \text{       and} \\
        \text{if }\exists a \in \partial_\infty T \text{ with } \p(\G)a = a,     \text{ then } T \cong \mathbb{R}
        \end{array}}.
    \end{equation*}

    \begin{Rem}\label{Rem: minimal invariant subtree has extendible segments}
        A \G -invariant minimal \Rr-tree not reduced to a point is the union of the translation axes of the hyperbolic elements in the image of \G \ \cite{MSval, Pthe}. In particular, such an \Rr-tree has extendible segments \cite[Lemma 4.3]{Pthe}. Thus, an orientation on a \G -invariant minimal tree is equivalent to endow the \Rr-tree with a coherent cyclic order on its visual boundary by Proposition \ref{orientation on the boundary gives an orientation on the set}.
    \end{Rem}

    Wolff refines the Gromov equivariant topology by incorporating the orientation of the hyperbolic plane. The resulting oriented Gromov equivariant topology on $\Xi(\G,\pslr) \cup \mathcal{T}^o$ uses the identification of $\pslr$ with the group of orientation preserving isometries of $\h$. A key feature of this topology is that cyclic orders of triples of points are stable under small perturbations, both in $\h$ and in oriented \Rr-trees \cite[Subsection 3.2.2]{Wcon}.

\subsection{Construction of the oriented Gromov equivariant compactification}\label{Subsection: construction of the oriented compactification}
    
    Let $X$ be either an oriented \Rr-tree with extendible segments (so with a coherent cyclic order on its boundary) or \h, the ball (Poincaré) model of the hyperbolic plane, with its standard orientation (counterclockwise orientation on $\partial_\infty \h \cong \mathbb{S}^1$) and a metric $\mathrm{dist}_{\h}$ proportional to its standard metric. Denote by $\d(X)$ its best hyperbolicity constant and by $\oo$ the cyclic order on $\partial_\infty X$. Following \cite{Wcon}, we recapitulate the construction of the oriented Gromov equivariant topology. We show that this topology is first countable on $\Xi(\G, \pslr) \cup \mathcal{T}^{o}$ and that the standard orientation on \h \ is described by a semialgebraic equation.

    \begin{Lem} \label{Lem: orientation as sign of determinant}
        If $\mathrm{sgn}$ denotes the sign function on $\Rr$, then
        \[
            \defmap{o_\Rr}{ \left(\circle\right) ^3}{\set{-1,0,1}}{(z_1,z_2,z_3)}
                { \mathrm{sgn}\left(\mathrm{det}(z_2-z_1, z_3-z_2)\right)}
        \]
        defines a cyclic order on $\mathbb{S}^1$. In coordinates, if $(x_1,y_1), (x_2,y_2),(x_3,y_3) \in \circle$, then 
        \[
            o_\Rr((x_1,y_1),(x_2,y_2),(x_3,y_3))=\mathrm{sgn}((y_3 - y_2)(x_2 - x_1) - (y_2 - y_1)(x_3 - x_2)).
        \]
        Moreover, for every \Ff \ real closed, $o_\Rr$ extends to a cyclic order on $\circle(\Ff)$ which is \pslf -invariant.
    \end{Lem}

    \begin{Rem} \label{Rem: independantce of the hyperbolic orientation from the base point}
        This formula describes the counterclockwise orientation on $\circle$. It is independent of the base point chosen for the identification $\partial_\infty \mathbb{H} \cong \circle$, as shown by the argument in the proof of \cite[Lemma 3.15]{Wcon}.
    \end{Rem}

    \begin{proof}
        It is a direct computation to verify that $o_\Rr$ satisfies the second item of Definition \ref{Def: cyclic order}. That is, \fev \ $(z_1,z_2,z_3) \in (\circle)^3$ it holds 
        \[
            o_\Rr(z_1,z_2,z_3)=o_\Rr(z_2,z_3,z_1)=-o_\Rr(z_1,z_3,z_2).
        \]
        
        If $\mathrm{Card}\set{z_1,z_2,z_3}\leq 2$ then $o_\Rr(z_1,z_2,z_3)=0$ is a direct computation. Suppose $o_\Rr(z_1,z_2,z_3)=0$ for some $(z_1,z_2,z_3)\in (\circle)^3 $ such that
        \[
            \mathrm{det}(z_2-z_1, z_3-z_2)=0.
        \]
        If one column is $0$, without loss of generality $z_2-z_1 = 0$, then $\mathrm{Card}\set{z_1,z_2,z_3}\leq 2$. 
        Otherwise, there exists $\l \in \Rr^*$ such that $z_2-z_1=\l(z_3-z_2)$. That is
        \[
            z_2 = \frac{1}{1+\l}z_1 + \frac{\l}{1+\l}z_3.
        \]
        Thus $z_2$ is on the line that connects $z_1$ to $z_3$ in $\Rr^2$. But this line intersects the circle in at most two points, so $\mathrm{Card}\set{z_1,z_2,z_3}\leq 2$.
        Hence $o_\Rr(z_1,z_2,z_3)=0$ \iff \ $\mathrm{Card}\set{z_1,z_2,z_3}\leq 2$, and $o_\Rr$ satisfies the first item of Definition \ref{Def: cyclic order}.
        
        We verify that $o_\Rr$ satisfies the last item of Definition \ref{Def: cyclic order}. Consider $z_1,z_2,z_3,z_4\in \circle,$ such that $o_\Rr(z_1,z_2,z_3)=1=o_\Rr(z_1,z_3,z_4)$, and we verify that  $o_\Rr(z_1,z_2,z_4)=1$.
        Define the continuous functions 
        \begin{align*}
            \func{f}{\circle \backslash \set{z_1,z_3}}{\set{-1,1}} \quad &r \mapsto o_\Rr(z_1,r,z_3), \\
            \func{g}{\circle \backslash \set{z_1,z_2}}{\set{-1,1}} \quad &r \mapsto o_\Rr(z_1,z_2,r).
        \end{align*}
        Since $o_\Rr$ is continuous as a composition of continuous functions, the function $f$ is continuous. Since $f(z_2)=1$ and $f(z_4)=-1$, the two elements $z_2$ and $z_4$ are in different connected components of $\circle \backslash \set{z_1,z_3}$. Thus, $z_4$ and $z_3$ are in a same connected component of $\circle \backslash \set{z_1,z_2}$. Hence, by continuity of $g$
        \[
            1=g(z_4)=g(z_3).
        \]
        That is $o_\Rr(z_1,z_2,z_4)=1$ as desired so that $o_\Rr$ is a cyclic order on \circle .
        The second part of the statement is a direct consequence of the Transfer principle (Theorem~\ref{Thm transfer principle}) as $o_\Rr$ is described by a semialgebraic equation and the determinant is invariant by the \pslr -action.
    \end{proof}

    \begin{Lem}[{\cite[Lemma 3.10]{Wcon}}] \label{minimizer of the Gromov product}
        Let $X$ be an oriented \Rr-tree or \h. If $x_1, x_2, x_3 \in X$, then \tes \ a unique $P \in X$ which minimizes the function $x \mapsto \Dist{X}{x}{x_1} + \Dist{X}{x}{x_2} + \Dist{X}{x}{x_3}$. Moreover, the map $X^3 \rightarrow X$ that sends $(x_1,x_2,x_3) \mapsto P$ is continuous. We call $P$ the \emph{center of the triple}.
    \end{Lem}
        
    For $r \geq 0$, denote by $V(r) \subset X^3$ the set of $(x_1, x_2, x_3) \in X^3$ \st \ \fev \ permutation $(i, j, k)$ of $(1, 2, 3)$, the Gromov product verifies 
    \[
        2(x_i,x_k)_{x_j} := \Dist{X}{x_i}{x_j} + \Dist{X}{x_j}{x_k} - \Dist{X}{x_i}{x_k} > 2r.
    \]

    \begin{Lem}[{\cite[Lemma 3.12]{Wcon}}]
        For every $(x_1, x_2, x_3) \in V(6\d(X))$, the center of the tripod $P \not\in \set{x_1, x_2, x_3}$.
    \end{Lem}

    This lemma allows to control the shape of tripods and specifically to avoid their degeneration under small perturbations. 
    We now define a rigid notion of orientation, and of subsets of $X$ that come in the same order.
    \begin{Not}[{\cite[Subsection 3.2.2]{Wcon}}] \label{Not: construction of the orientation on tripods}
        Given a triple $(x_1,x_2,x_3)$ in $V(6\d(X))$, along with $P$ the center of the tripod, denote by 
        \[
            \mathfrak{or}(x_1,x_2,x_3) := \oo(a_1,a_2,a_3),
        \]
        where $a_i$ is a ray based at $P$ and passing through~$x_i$ for $i\in \set{1,2,3}$. This is a well defined quantity which does not depend on the chosen ray. 
    \end{Not}

    \begin{Rem}[{\cite[Page 1274]{Wcon}}]\label{Rem: equivalence of all definitions of cyclic order on the hyperbolic plane}
        In \h, there exists a single ray $a_i$, up to parametrization, based at $P$ and passing through $x_i$. From above, the three rays $a_1,a_2,a_3$ define three points on $\partial_{\infty}\h$ and the quantity $\mathfrak{or}(x_1,x_2,x_3)$ is well defined for $\oo$ the cyclic order on $\partial_{\infty}\h$ as defined in Lemma \ref{Lem: orientation as sign of determinant}. 
        For every $P\in \h$, by the uniqueness of $a_i$ passing through $P$ and $x_i$, the set $\mathcal{G}_{\h}(P)$ is naturally in bijective correspondence with $\partial_\infty \h$.
        In particular, a cyclic order on $\partial_{\infty}\h$ defines a cyclic order on $\mathcal{G}_{\h}(P)$.
    \end{Rem}

    \begin{Not}
        For the rest of this text, denote an element of $\Xi(\G,\pslr)\cup \mathcal{T}^o$, be it a class of actions on \h \ or on oriented \Rr-trees, by $[\p,X,\oo]$ or $[\p,X,or]$. Denote also \func{\orf}{V(6\d(X))}{\set{-1,1}} the function defined in Notation \ref{Not: construction of the orientation on tripods} and $or$ the induced orientation on $X$ (a cyclic order on $\mathcal{G}_X(P)$ for every $P\in X$). That is, if $X$ is an oriented \Rr-tree with extendible segments
        \[
            or=\mathrm{Push}^{-1}(\oo),
        \]
        see Proposition \ref{orientation on the boundary gives an orientation on the set}, and if $X=\h$, then $or$ is the standard orientation on \h , see Remark \ref{Rem: equivalence of all definitions of cyclic order on the hyperbolic plane}.
    \end{Not}
    
    Let $[\p,X,\oo],[\p ', X',\oo']\in \Xi(\G,\pslr)\cup \mathcal{T}^o$. Let $\e >0$, $Q$ be a finite subset of \G , and $K=(x_1,\ldots,x_p)\subset X$, $K'=(x'_1,\ldots,x'_p)\subset X'$ be finite sequences. As in \cite{Pthe}, the sequence $K'$ is a \emph{$Q$-equivariant \e-approximation} of $K$ if \fev \ $g, h \in Q$, and every $i, j \in \set{1,\ldots , p}$ it holds
    \[
        |\Dist{X}{\p(g)x_i}{\p(h)x_j} - \Dist{X'}{\p'(g)x'_i}{\p'(h)x'_j}| \leq \e.
    \]

    \begin{Rem}[{\cite[Remark 3.18]{Wcon}}]
        Suppose that $X$, $X'$ are either the hyperbolic plane or an \Rr-tree, $\a,\e_1,\e_2>0$, and $x_1, x_2, x_3 \in X$, $x'_1, x'_2, x'_3 \in X'$ verify
        \begin{align*}
            &(x_1, x_2, x_3) \in V(6\d(X)+\a), \quad |\d(X)-\d(X')|\leq \e_1 \quad \text{and} \\
            &|\Dist{X}{x_i}{x_j}-\Dist{X'}{x'_i}{x'_j}|\leq \e_2 \quad \forall i,j\in \set{1,2,3}.
        \end{align*}
        Then $(x'_1, x'_2, x'_3) \in V(6\d(X')+\a-6\e_1-3\e_2)$.
        Therefore, the sets $V(r)$ provide open conditions, robust under \e -approximations guaranteeing that we can consider the orientations defined by triples of points.
    \end{Rem}
    
    \begin{Def}
        Let $[\p,X,\oo],[\p ', X',\oo']$ be two elements in $\Xi(\G,\pslr)\cup \mathcal{T}^o$. Let $\e >0$, $Q$ be a finite subset of \G \ with $e\in Q$, and $K=(x_1,\ldots,x_p)\subset X$, $K'=(x'_1,\ldots,x'_p)\subset X'$ be finite sequences.
        We say that $K'$ is an \emph{oriented $Q$-equivariant \e-approximation} of $K$ if $K'$ is a $Q$-equivariant \e-approximation of $K$ and
        \[
            \orf(x_i, x_j , x_k) = \orf'\left(x'_i, x'_j , x'_k\right)
        \]
        for every $(x_i, x_j , x_k) \in V(6\d(X) + 9\e)$.
    \end{Def}

    If $[\p,X,\oo] \in \Xi(\G,\pslr)\cup \mathcal{T}^o$, $\e > 0$, $K = (x_1, \ldots , x_p) \subset X$ is a finite sequence, and $Q\subset \G$ a finite subset containing $e$, then denote by 
    \[
        U_{K,\e ,Q}(\p, X, \oo)
    \]
    the subset of $[\p',X',\oo'] \in \Xi(\G,\pslr)\cup \mathcal{T}^o$ \st \ $X'$ contains an oriented $Q$-equivariant \e-approximation of $K$.

    \begin{Prop}[{\cite[Proposition 3.20]{Wcon}}] \label{Def: oriented Gromov equivariant topology}
        The subsets defined above
        \[
            U_{K,\e,Q}(\p, X, \oo) \subset \Xi(\G,\pslr)\cup \mathcal{T}^o
        \]
        form a basis of open sets for the \emph{oriented Gromov equivariant topology}.
    \end{Prop}

    On $\Xi(\G, \pslr)$, the oriented Gromov equivariant topology is equivalent to the compact-open topology \cite[Proposition 3.2.1]{Wcon}. 
    The \emph{oriented Gromov equivariant compactification} is the closure of $\Xi(\G, \pslr)$ in the topological space~$\Xi(\G, \pslr) \cup \mathcal{T}^{o}$, which is denoted by
    \[
        \orient{\Xi(\G, \pslr)}.
    \]
    It is a natural compactification, in the sense that, $\Xi(\G, \pslr)$ is open an dense in the Hausdorff compact space $\orient{\Xi(\G, \pslr)}$ and the action of $\mathrm{Out}(\G)$ on $\Xi(\G, \pslr)$ extends continuously to an action of $\mathrm{Out}(\G)$ on $\orient{\Xi(\G, \pslr)}$ \cite[Theorem 3.23]{Wcon}.

    \begin{Rem}
        Throughout this section, the hyperbolic plane is equipped with a fixed orientation, its standard one. The novelty compared to the length spectrum compactification is that the representations take values in the orientation preserving isometry group of $X$. Furthermore, the equivalence relation defining $\mathcal{T}'$ in Subsection \ref{Subsection: cyclic orders and oriented real trees} specifically involves quotients by orientation preserving isometries, rather than arbitrary isometries. This distinction is crucial for preserving the orientation structure in our construction.
    \end{Rem}

    The following proposition adapts \cite[Proposition 1.6]{Pthese}, incorporating a remark from the proof of \cite[Théorème 5.4.6]{Wthese}. 
    An \e-approximation $Q$-equivariant between two metric spaces $X,X'$ is a logical relation~$\mathcal{R}$ within $X \times X'$ which is surjective \cite[Definition 1]{Pthese}.  
    Then, an \e-approximation $Q$-equivariant between $X$ and $X'$ is \emph{closed} if it is closed as a subset of $X \times X'$.

    \begin{Prop}
         If \G \ is countable, then $\orient{\Xi(\G, \pslr)}$ is first countable. That is, each element of $\orient{\Xi(\G, \pslr)}$ has a countable basis of open neighborhoods.
    \end{Prop}

    \begin{proof}
        Let $[\p,X,\oo] \in \orient{\Xi(\G, \pslr)}$ and $(K_n)$ a sequence of finite subsequences in~$X$, increasing for the inclusion, \st \ their union is dense in $X$ and \G -invariant. We show that the open sets 
        \[
            \setrelb{\left(U_{K_n,r ,Q}(\p,X,\oo)\right)}{Q \text{ a finite subset of } \G \text{ containing } e,\ n\in\Nn, \ r \in \Qq_{>0}}
        \]
        form a basis of open neighborhoods of $[\p,X,\oo]$ in the oriented Gromov equivariant topology. 
        Let $K$ be a finite subsequence of $X$, $Q$ a finite subset of \G \ containing $e$, and $\e > 0$. Cover $K$ with a finite number of open balls of radius $\e / 12$. By density of the union of the $K_n$ inside $X$, the centers of each of these balls is at distance smaller than $\e / 12$ from a point in one of the $K_n$. 
        Denote by 
        \[
            K'\subset K_n
        \]
        the finite subsequence of $X$ formed by these points in $K_n$.
        Then $K$ is contained in the $\e /6$-neighborhood of~$K'$, and $K'$ is contained in the $\e/6$-neighborhood of~$K$. Furthermore, by \G-invariance of the union of the $K_n$, the union~$K' \cup QK'$ is contained in one of the~$K_j$, for $j$ sufficiently large.
        Using \cite[Remarque 1.4, Remarque 1.5]{Pthese}, there exists a closed $Q$-equivariant \e-approximation 
        \[
            K'' \text{ of } K,
        \]
        where $K''$ is contained in~$K_j$. 
        Let $r$ be a rational number with $0 < r < \e$ and we show that 
        \[
        U_{K_j,r ,Q}(\p,X,\oo) \subset U_{K, 2\e ,Q}(\p,X,\oo).
        \]
        Let $[\p^*,X^*,\oo^*] \in U_{K_j,r ,Q}(\p,X,\oo)$. There exists an $r$-approximation $Q$-equivariant between $K^*\subset X^*$ and $K_j$. 
        In particular, there is an $r$-approximation $Q$-equivariant between a finite subset $A \subset K^*$ and $K''$. By the last remark in the proof of \cite[Lemme 1.2]{Pthese}, there exists an $(r+\e)$-approximation between $A$ and $K$. Since~$K$ and~$K_j$ are subsets of the same oriented space, they come in the same order. Hence $[\p^*,X^*,\oo^*] \in U_{K,2\e ,Q}(\p,X,\oo)$ as wanted. 
    \end{proof}

    We described the oriented Gromov equivariant topology of character varieties using orientations on \Rr-trees. In the next subsection, we characterize the orientation on \Rr-trees appearing in $\partial \orient{\Xi(\G, \pslr)}$ as ultralimits of orientations on the hyperbolic plane.

    \subsection{Description via asymptotic cones}\label{Subsection: Description of the orientation in the oriented compactification via asymptotic cones}

    This subsection describes elements of $\orient{\Xi(\G, \pslr)}$ as ultralimits of sequences in $\Xi(\G, \pslr)$. Building on \cite{Pthe}, we show that $\G$-actions by isometry preserving the orientation on oriented \Rr-trees can be characterized as ultralimits of $\G$-actions on asymptotic cones equipped with an ultralimit orientation.
    Recall that if $\p \in \mathrm{Hom}(\G, \pslr)$, then 
    \[
        \lg(\p):=\inf_{x\in \h}\max_{\g\in F}\Dist{\h}{x}{\p(\g) x}
    \]
    is the displacement of the representation $\p$, where $F$ is a finite generating set of~\G. 
    Also, for~$\p \in \mathrm{Hom}_{\mathrm{red}}(\G, \pslr)$, the infimum of the displacement function is attained, see Proposition~\ref{Proposition minimum of the displacement function}. 
    
    \begin{Not}\label{Not: ultralimit of actions}
        Let \u \ be a non-principal ultrafilter on \Nn , $(\p_k)$ a sequence in $\mathrm{Hom}_{\mathrm{red}}(\G,\pslr)$ such that $\lg(\p_k) \rightarrow \infty$, $\ast_k \in \h$ an element achieving the infimum of the displacement function of $\p_k$ for every $k\in \Nn$, and
        $T^\u := \Cone{\u}{\h}{\lg(\p_k)}{\ast_k}$.
        The sequence $(\p_k) \subset \mathrm{Hom}(\G,\pslr)$ induces a \G -action by isometries on $T^\u$ via 
        \[
            \p^\u(\g)([z_k])=[\p_k(\g)z_k] \quad \forall \g\in\G, \text{ and } \forall [z_k]\in T^\u.
        \]
        We call $\p^\u=\lim_\u \p_k$ the \emph{ultralimit representation}, see Subsection \ref{Subsection: Asymptotic cones} and \cite[Page 434]{Psurlacompactificationdethurston}.
    \end{Not}

    \begin{Prop}[{\cite[Page 434]{Psurlacompactificationdethurston}}] \label{Prop: limit in Gromov Hausdorff topology is the asymptotic cone}
        Let \u \ be a non-principal ultrafilter on \Nn , $[\p_k,\h]$ a sequence in $\Xi(\G, \pslr)$ such that $\mathrm{lg}(\p_k)\rightarrow \infty$, and $\p^\u=\lim_\u \p_k$. 
        \com{which converges to 
        \[
            (\p,T,\oo_T) \in \partial\orient{\Xi(\G, \pslr)}.
        \]}
        If $\ast_k\in \h$ is an element in \h \ achieving the infimum of the displacement function of $\p_k$ for every $k\in \Nn$ and $T^\u:=\Cone{\u}{\h}{\lg(\p_k)}{\ast_k}$, then
         \[
            \lim_\u \left(\p_k,\h\right) = \left(\p^\u,T^\u\right)
        \]
        in the Gromov equivariant topology.
        That is, for every $\e >0$, $Q$ a finite subset of \G \ with $e\in Q$, and $K=([x^1_k],\ldots,[x^p_k])\subset T^\u$ a finite sequence, for \u -almost every $k\in \Nn$, the sequence 
        \[
            K'=\left(x^1_k,\ldots,x^p_k\right)\subset \h
        \]
        is a $Q$-equivariant \e-approximation of $K$. 
    \end{Prop}

    \begin{Rem}[{\cite[Page 437]{Psurlacompactificationdethurston}}] \label{Rem: convergence to the cone of to an invariant subtree}
         If a sequence of \G-actions by isometries on \h \ converges, in the Gromov equivariant topology, to an \Rr-tree equipped with a \G-action without a global fixed point, then it also converges in the Gromov equivariant topology to every invariant subtree, and in particular to its unique minimal invariant subtree.
    \end{Rem}

    We now specialize this convergence to account for an orientation.
    
    \begin{Rem} \label{Rem: the germ is represented by en element in the tree}
        Let $X$ be \h \ or an \Rr-tree and $[\s]$ a germ of oriented segments at $P\in X$. Consider $\s \in [\s]$ with endpoints $P$ and $x$. The oriented segment $\seg{P}{x}$ defines the same germ of oriented segment $[\s]=[\seg{P}{x}]$. So, every germ of oriented segments is represented by an element $x\in X$. It does not depend on the choice of the representative $\s$ or $x \in \mathrm{Im}(\s)$. Indeed, for two oriented segments $\s,\s'\in [\s]$, by definition of the equivalence class of a germ, there exists
        \[
            x\in \left(\mathrm{Im}(\s)\cap\mathrm{Im}(\s')\right)\backslash \set{P}.
        \]
        Then $[\seg{P}{x}]=[\s]=[\s']$ in $\mathcal{G}_X(P)$.

    \end{Rem} 

    In the following definition of ${or^\u([P_k])}$, for convenience, we denote by $x \in \mathcal{G}_X(P)$ the germ of oriented segments $\cseg{P}{x}$, and we use the notation $\cseg{P}{x}_\mathcal{G}$ to avoid confusion when necessary.   

    \begin{Thm} \label{Thm: The limit orientation is an orientation}
        Let \u \ be a non-principal ultrafilter on \Nn , $(\l_k)\subset \Rr$ a sequence such that $\l_k\rightarrow \infty$, $(\ast_k) \subset \h$ a sequence, and $T^\u := \Cone{\u}{\h}{\l_k}{\ast_k}$. 
        Consider $[P_k]^\u \in T^\u$ and $\mathcal{G}_{T^\u}([P_k]^\u)$ the set of germs of oriented segments at $[P_k]^\u$. The map ${or^\u([P_k]^\u)}$:
        \[
            \mapp{\left(\mathcal{G}_{T^\u}([P_k]^\u)\right)^3}{\set{-1,0,1}}{\left({[x_k]^\u},{[y_k]^\u},{[z_k]^\u}\right)}{
            \begin{cases}
              0 & \text{if } \mathrm{Card}\set{{[x_k]^\u},{[y_k]^\u},{[z_k]^\u}} \leq 2, \\
              \lim_\u or(P_k)({x_k},{y_k},{z_k}) & \text{otherwise},
            \end{cases}}
        \]
        is a cyclic order on $\mathcal{G}_{T^\u}([P_k]^\u)$, where $or(P_k)$ is the cyclic order on $\mathcal{G}_{\h}(P_k)$ given by the standard orientation on \h. Hence $or^\u$ defines an orientation on $T^\u$.
    \end{Thm}

    \begin{proof}
        We first show that $or^\u([P_k]^\u)$ is well defined and independent of the choice of representatives, which is the main part of the proof.
        Any segment in $T^\u$ is the ultralimit of a sequence of segments in \h \ by \cite[Corollary 11.38]{ggt}.
        Suppose the germ of oriented segments $\cseg{[P_k]^\u}{[x_k]^\u}_\mathcal{G}$ is represented by two sequences $(\cseg{P_k}{x_k}_\mathcal{G})$ and $(\cseg{P'_k}{x'_k}_\mathcal{G})$.  
        As in Remark \ref{Rem: the germ is represented by en element in the tree}, we may assume
        \[
            [x_k]^\u= [x'_k]^\u \in T^\u \quad \text{and} \quad  [P_k]^\u=[P'_k]^\u \in T^\u.
        \]
        
        If $\mathrm{Card}\set{\cseg{[P_k]^\u}{[x_k]^\u}_\mathcal{G},\cseg{[P_k]^\u}{[y_k]^\u}_\mathcal{G},\cseg{[P_k]^\u}{[z_k]^\u}_\mathcal{G}} \leq 2$, then at least two of the germs of oriented segments are equal. 
        Without loss of generality, suppose
        \[
            \cseg{[P_k]^\u}{[x_k]^\u}_\mathcal{G}=\cseg{[P_k]^\u}{[y_k]^\u}_\mathcal{G}.
        \]
        As in Remark \ref{Rem: the germ is represented by en element in the tree}, there exists 
        \[
            [w_k]^\u\in \mathrm{Im}(\seg{[P_k]^\u}{[x_k]^\u})\cap \mathrm{Im}(\seg{[P_k]^\u}{[y_k]^\u})\backslash \set{[P_k]^\u}.
        \]
        Since $[x_k]^\u = [x'_k]^\u$ and $[P_k]^\u=[P'_k]^\u$
        \[
            \cseg{[P'_k]^\u}{[y_k]^\u}_\mathcal{G}=\cseg{[P_k]^\u}{[w_k]^\u}_\mathcal{G}=\cseg{[P_k]^\u}{[x_k]^\u}_\mathcal{G}=\cseg{[P'_k]^\u}{[x'_k]^\u}_\mathcal{G}.
        \]
        Hence $\mathrm{Card}\set{\cseg{[P'_k]^\u}{[x'_k]^\u}_\mathcal{G},\cseg{[P'_k]}{[y_k]^\u}_\mathcal{G},\cseg{[P'_k]}{[z_k]^\u}_\mathcal{G}} \leq 2$ so that
        \[
            {or^\u([P_k]^\u)}\left({[x_k]^\u},{[y_k]^\u},[z_k]^\u\right)=0={or^\u([P'_k]^\u)}\left({[x'_k]^\u},{[y_k]^\u},{[z_k]^\u}\right).
        \]

        Suppose all three germs are distinct. 
        On the one hand, suppose that 
        \[
            \lim_\u or(P_k)(x_k,y_k,z_k)=1 \quad \text{and} \quad \lim_\u or(P'_k)(x'_k,y_k,z_k)=0.
        \]
        Since \set{-1,0,1} is discrete, the second limit implies $or(P'_k)(x'_k,y_k,z_k)=0$ for \u-almost every $k\in \Nn$. 
        Since $or(P_k)$ is a cyclic order, without loss of generality 
        \[
            \cseg{P'_k}{x'_k}_\mathcal{G}=\cseg{P'_k}{y_k}_\mathcal{G}
        \]
        for \u-almost every $k\in\Nn$. Since \h \ is uniquely geodesic, assume without loss of generality that $\mathrm{Im}(\seg{P'_k}{x'_k})\subset \mathrm{Im}(\seg{P'_k}{y_k})$ for \u-almost every $k\in\Nn$. 
        Since $T^\u$ is uniquely geodesic
        \[
            \mathrm{Im}(\seg{[P'_k]^\u}{[x'_k]^\u})\subset \mathrm{Im}(\seg{[P'_k]^\u}{[y_k]^\u})
        \]
        so that $\cseg{[P'_k]^\u}{[x'_k]^\u}_\mathcal{G}= \cseg{[P'_k]^\u}{[y_k]^\u}_\mathcal{G}$.
        This is a contradiction with the three germs of oriented segments being distinct.
        
        On the other hand, suppose:
        \[
            \lim_\u or(P_k)(x_k,y_k,z_k)=1 \quad  \text{and} \quad \lim_\u or(P'_k)(x'_k,y_k,z_k)=-1.
        \]
        By the independence of the choice of base point for the orientation on \h \ (Remark \ref{Rem: independantce of the hyperbolic orientation from the base point}), this is equivalent to 
        \[
            \lim_\u or(P_k)(x_k,y_k,z_k)=1 \quad \text{and} \quad \lim_\u or(P_k)(x'_k,y_k,z_k)=-1.
        \]
        Consider the rays $a_k,a'_k,b_k,c_k$ passing through $P_k$ and $x_k,x'_k,y_k,z_k$ respectively. 
        By Lemma \ref{Lem: orientation as sign of determinant}, the above conditions translate to
        \[
            o_\Rr(a_k,b_k,c_k)=1 \quad \text{and} \quad o_\Rr(a'_k,b_k,c_k)=-1
        \]
        for \u-almost every $k\in \Nn$. 
        Thus $b_k \in \mathrm{arc}(a_k,a'_k)$ and $c_k \in \mathrm{arc}(a'_k,a_k)$, where $\mathrm{arc}(a'_k,a_k)$ is the oriented segment of $\circle$ going in the counterclockwise direction. Without loss of generality, the length of $\mathrm{arc}(a_k,a'_k)$ is smaller or equal to the length of $\mathrm{arc}(a'_k,a_k)$.
        Then, every neighborhood of $a_k$ containing $a'_k$ in the shadow topology on $\h \cup \partial_\infty \h$ also contains $b_k$ \cite[Subsection 11.11]{ggt}. That is, there exists $b^*_k\in \mathrm{Im}(b_k)$ such that 
        \[
            \Dist{\h}{b^*_k}{x_k} \leq \Dist{\h}{x_k}{x'_k}
        \]
        for \u-almost every $k\in \Nn$.
        Since $[x_k]^\u=[x'_k]^\u$, it holds $[x_k]^\u=[b^*_k]^\u$.
        Thus $[b^*_k]^\u \in (\mathrm{Im}(\seg{[P_k]^\u}{[x_k]^\u})\cap \mathrm{Im}(\seg{[P_k]^\u}{[y_k]^\u}))\backslash \set{[P_k]^\u}$ so that 
        \[
            \cseg{[P_k]^\u}{[x_k]^\u}_\mathcal{G}=\cseg{[P_k]^\u}{[y_k]^\u}_\mathcal{G}.
        \]
        This is a contradiction with the three germs of oriented segments being distinct so that
        \[
            \lim_\u or(P_k)(x_k,y_k,z_k)=\lim_\u or(P'_k)(x'_k,y_k,z_k).
        \]
        A similar argument proves that $or^\u([P_k]^\u)$ does not depend on the choice of $[y_k]^\u,[z_k]^\u$ such that $or^\u([P_k]^\u)$ does not depend on the choice of representatives.

        We show that $or^\u([P_k]^\u)$ defines a cyclic order. 
        If $\mathrm{Card}\set{{[x_k]^\u},{[y_k]^\u},{[z_k]^\u}}\leq 2$, then by definition $or^\u([P_k]^\u)({[x_k]^\u},{[y_k]^\u},{[z_k]^\u})=0$. Suppose $[x_k]^\u,[y_k]^\u,[z_k]^\u$ are distinct germs of oriented segments in $\mathcal{G}_{T^\u}([P_k]^\u)$ and verify 
        \[
            or^\u([P_k]^\u)([x_k]^\u,[y_k]^\u,[z_k]^\u)=0.
        \]
        Since \set{-1,0,1} is discrete, $or(P_k)(x_k,y_k,z_k)=0$ for \u-almost every $k\in \Nn$. 
        Since $or(P_k)$ is a cyclic order, without loss of generality 
        \[
            \cseg{P_k}{x_k}_\mathcal{G}=\cseg{P_k}{y_k}_\mathcal{G}
        \]
        for \u-almost every $k\in\Nn$. Since \h \ is uniquely geodesic, assume without loss of generality that $\mathrm{Im}(\seg{P_k}{x_k})\subset \mathrm{Im}(\seg{P_k}{y_k})$ for \u-almost every $k\in\Nn$. 
        Since $T^\u$ is uniquely geodesic
        \[
            \mathrm{Im}(\seg{[P_k]^\u}{[x_k]^\u})\subset \mathrm{Im}(\seg{[P_k]^\u}{[y_k]^\u})
        \]
        so that $\cseg{[P_k]^\u}{[x_k]^\u}_\mathcal{G}= \cseg{[P_k]^\u}{[y_k]^\u}_\mathcal{G}$.
        This is a contradiction with the three germs of oriented segments being distinct. Hence 
        \[
            \mathrm{Card}\set{{[x_k]^\u},{[y_k]^\u},{[z_k]^\u}}\leq 2
        \]
        so that $or^\u([P_k])$ satisfies the first item of Definition \ref{Def: cyclic order}.
        Similarly, if $[x_k]^\u,[y_k]^\u,[z_k]^\u$ are three germs of oriented segments in $\mathcal{G}_{T^\u}([P_k]^\u)$, then for \u-almost every $k$
        \[
            or(P_k)(x_k,y_k,z_k)=or(P_k)(y_k,z_k,x_k)=-or(P_k)(x_k,z_k,y_k).
        \]
        Thus, by definition of the ultralimit 
        \begin{align*}
            or^\u([P_k]^\u)({[x_k]^\u},{[y_k]^\u},{[z_k]^\u})&= or^\u([P_k]^\u)({[y_k]^\u},{[z_k]^\u},{[x_k]^\u}) \\
            &=-or^\u([P_k]^\u)({[x_k]^\u},{[z_k]^\u},{[y_k]^\u}).
        \end{align*}
        so that $or^\u([P_k]^\u)$ verifies the second item of Definition \ref{Def: cyclic order}. 
        The third axiom follows from the same reasoning. Therefore, $or^\u([P_k]^\u)$ defines a cyclic order, and hence $or^\u$ defines an orientation on $T^\u$.

    \end{proof}

    \begin{Thm} \label{Thm: Wolff limit is the ultralimit}
    \com{$\orient{\Xi(\G, \pslr)}$ which converges to 
        \[
            (\p,T,\oo_T) \in \partial\orient{\Xi(\G, \pslr)},
    \]}
        Let \u \ be a non-principal ultrafilter on \Nn , $[\p_k,\h,or]$ a sequence in $\Xi(\G, \pslr)$ such that $\mathrm{lg}(\p_k)\rightarrow \infty$, $\ast_k$ an element in \h \ achieving the infimum of the displacement function of $\p_k$ for every $k\in \Nn$, and $\p^\u=\lim_\u \p_k$. 
        If $T_{\p^\u}^\u$ is the $\p^\u$-invariant minimal subtree inside the asymptotic cone $\Cone{\u}{\h}{\lg(\p_k)}{\ast_k}$, then
        \[
            \lim_\u[\p_k,\h,or] = [\p^\u,T_{\p^\u}^\u,or^\u] \in \partial\orient{\Xi(\G, \pslr)},
        \]
        where $or^\u$ is the restriction to $T_{\p^\u}^\u$ of the ultralimit orientation defined in Theorem~\ref{Thm: The limit orientation is an orientation}.
    \end{Thm}

    \begin{proof}  
        Let $(U_{K_n,\e_n ,Q_n}(\p^\u,T_{\p^\u}^\u,or^\u))_{n\in \Nn}$ be a countable basis of open neighborhoods in the oriented Gromov equivariant topology around $(\p^\u,T_{\p^\u}^\u,or^\u)$.
        Consider as in Notation \ref{Not: construction of the orientation on tripods} the functions 
        \[
            \defmap{\orf^\u}{V(0)}{\set{-1,1}}{(x,y,z)}{or^\u(P)(x,y,z)} \quad \text{and} \quad \Defmap{\orf}{V(6\d(\h))}{\set{-1,1}}{(x,y,z)}{or(P)(x,y,z)},
        \]
        where $P$ is the center of the tripod $\mathrm{Trip}(x,y,z)$ as defined in Proposition \ref{minimizer of the Gromov product}.
        For $m\in \Nn$, suppose $K_m:=([x^{1,m}_k]_k,\ldots,[x^{\ell_m,m}_k]_k) \subset T_{\p^\u}^\u$. By Proposition \ref{Prop: limit in Gromov Hausdorff topology is the asymptotic cone}, for every $g,h\in Q_m$ and every~$i,j\in \set{1,\ldots,\ell_m}$ 
        \[
            \left|\frac{\Dist{\h}{\p_k(g)x^{i,m}_k}{\p_k(h)x^{j,m}_k}}{\lg(\p_k)} - \Dist{T_{\p^\u}^\u}{\p^\u(g)\left[x^{i,m}_k\right]^\u}{\p^\u(h)\left[x^{j,m}_k\right]^\u}\right| \leq \e_m, 
        \]
        for \u -almost every $k$.
        Moreover, by definition of the ultralimit, for \u -almost every $k$ and every triple of elements $([x^{i,m}_k]^\u,[x^{j,m}_k]^\u,[x^{p,m}_k]^\u)\in K_m$ which satisfies $([x^{i,m}_k]^\u,[x^{j,m}_k]^\u,[x^{p,m}_k]^\u)\in V(9\e_m)$
        \[
             \orf^\u\left(\left[x^{i,m}_k\right]^\u,\left[x^{j,m}_k\right]^\u,\left[x^{p,m}_k\right]^\u\right)=\orf\left(x^{i,m}_k,x^{j,m}_k,x^{p,m}_k\right).
        \]
        Thus, \u-almost every $[\p_k,\h,\oo]$ is in the open set $U_{K_n,\e_n ,Q_n}(\p^\u,T_{\p^\u}^\u,\oo^\u)$.
        Hence $[\p_k,\h,\oo]$ converges to $[\p^\u,T_{\p^\u}^\u,\oo^\u]$ along the ultrafilter \u.
        \com{Since the sequence itself is convergent, and the oriented Gromov equivariant topology is Hausdorff \cite[proof of Proposition 3.22]{Wcon}, $(\p,T,\oo_T)$ and $(\p^\u,T^\u , o^\u)$ are identified in $\orient{\Xi(\G, \pslr)}$. 
        Hence \tes \ a \G -equivariant orientation preserving isometry between $(\p,T,\oo_T)$ and $(\p^\u,T^\u,\oo^\u)$. }  
    \end{proof}

    We recalled the construction of the oriented Gromov equivariant compactification of $\Xi(\G,\pslr)$ and characterized the boundary elements as \G-actions by orientation preserving isometries on asymptotic cones equipped with a limit orientation. In the next section, we present another compactification of the character variety. Our goal is to use the theory developed in this section to compare the two compactifications in the final section.           

\section{The real spectrum compactification}  \label{Section real spectrum}
    We want to understand the degeneration of representations in character varieties. To do this, we examine the manner in which representations go to infinity. Our approach uses the real spectrum compactification of the character variety, which provides a natural compactification of the real points of semialgebraic sets and preserves the information furnished by the equalities and inequalities that define the semialgebraic set. 

\subsection{The real spectrum of semialgebraic sets}\label{Subsection: definition of the real spectrum compactification}
    The real spectrum applies to commutative rings with unity and provides a natural functor from commutative rings to compact spaces. 
    In this subsection, we first present this compactification and the accompanying notions essential for its study in our text. Our presentation follows \cite[Chapter 7]{BCRrea}, \cite{Bthe, Btree}, and adapts \cite[Section 2]{BIPPthereal} to our context. As in Subsection \ref{Subsection: preliminaries in real algebraic geometry}, let \Ll \ be a real field and denote by $\overline{\Ll}^r$ its real closure.

    \begin{Def}[{\cite[Proposition 7.1.2]{BCRrea}}]\label{Def: real spectrum}
        The real spectrum \spec{A} of a commutative ring $A$ with unity is the set of equivalence classes
        \[
            \spec{A}:= \setrelfrac{(\r, \Ff_\r)}{\func{\r}{A}{\Ff_\r} \text{ a ring morphism}, \ \Ff_\r = \overline{\mathrm{Frac}(\r(A))}^r}/\sim
        \]
        where $\mathrm{Frac}(\r(A))$ is the field of fractions of $\r(A)$ and $(\r_1, \Ff_1)$, $(\r_2, \Ff_2)$ are equivalent if \tes \ an ordered field isomorphism~\func{\psi}{\Ff_1}{\Ff_2} so that~$\r_2 = \psi \circ \r_1$.
        Equivalently
        \[
            \spec{A}:= \setrelfrac{(\r, \Ff)}{\func{\r}{A}{\Ff} \text{ a ring morphism}, \ \Ff \text{ a real closed field}}/\sim
        \]
        where $\sim$ is the smallest equivalence relation such that $(\r_1, \Ff_{1})$, $(\r_2, \Ff_{2})$ are equivalent if \tes \ an ordered field morphism~\func{\psi}{\Ff_{1}}{\Ff_{2}} so that~$\r_2 = \psi \circ \r_1$.
    \end{Def}
    The \emph{spectral topology} on the real spectrum is defined using a basis of open sets 
    \[
         \tilde{U}(a_1,\ldots , a_p) := \setrel{(\r,\Ff_\r) \in \spec{A}}{\r(a_k) > 0 \quad \forall k \in \set{1,\ldots, p}},
    \]
    where $a_k$ are elements in $A$ \fev \ $k\in\set{1,\ldots,p}$. 

    \begin{Thm}[{\cite[Proposition 7.1.25 (ii)]{BCRrea}}]\label{Thm: Functoriality of the real specturm}
        The real spectrum of $A$ is compact and its subset of closed points~\speccl{A} is Hausdorff and compact. 
    \end{Thm} 
        
    We now define the real spectrum compactification of an algebraic set.
    Let $\overline{\Qq}^r[V]$ be the coordinate ring of an algebraic set $V \subset (\overline{\Qq}^r)^n$, see Definition \ref{Def: coordinate ring}. If \Kk \ is a real closed field, denote by
    \[
        \spec{V(\Kk)}:=\spec{\Kk[V]}
    \]
    the \emph{real spectrum of the \Kk-extension of $V$}. 
    Moreover, we endowed $V(\Kk)$ with the Euclidean topology coming from the norm \func{N}{\Kk^n}{\Kk_{\geq 0}}, as defined in Subsection~\ref{Subsection: preliminaries in real algebraic geometry}. 
    \begin{Rem}\label{Rem: Euclidean topology is equivalent to the spectral topology}
        The Euclidean topology on $V(\Kk)$ is equivalent to the topology generated by the basis of open sets
        \[
            U(f_1, \ldots, f_p) := \setrel{v \in V(\Kk)}{f_1(v) > 0, \ldots, f_p(v) > 0},
        \]
        for $f_1, \ldots, f_p\in \Kk[V]$, see \cite[Subsection 2.1]{BCRrea}
    \end{Rem}

    \begin{Prop}[{\cite[Proposition 2.33]{BIPPthereal}}]\label{Prop: map from the algebraic set to the archimedean spectrum}
        Let $V\subset (\overline{\Qq}^r)^n$ be an algebraic set.
        The evaluation map
        \[
            \defmap{ev}{V(\Rr)}{\specclf{V\left(\overline{\Qq}^r\right)}}{(v_1,\ldots,v_n)}{(ev(v_1,\ldots,v_n),\Rr)}
        \]
        is a homeomorphism from $V(\Rr)$, with its Euclidean topology,
        onto its image with the spectral topology. 
        Moreover, $V(\Rr)$ is open and dense in \speccl{V(\overline{\Qq}^r)} and \speccl{V(\overline{\Qq}^r)} is metrizable.
    \end{Prop}
    
    Denote by~$\rsp{V}$ the closure of the image of $V(\Rr)$ in \speccl{V(\overline{\Qq}^r)}. To define the real spectrum of a semialgebraic set, recall the definition of constructible sets.

    \begin{Def}[{\cite[Definition 7.1.10]{BCRrea}}]\label{Def:constructible sets}
        Let $A$ be a commutative ring with a unit. A \emph{constructible subset} of \spec{A} is a boolean combination of basic open subsets~$\tilde{U}(a_1, \ldots , a_n)$. That is, obtained from the basic open sets of the real spectrum topology by taking finite unions, finite intersections and complements.
    \end{Def}
        
    Constructible sets form the essential building blocks of compact sets in the real spectrum topology. Moreover, they offer a correspondence between semialgebraic subsets of an algebraic set $V$ and compact subsets of \rsp{V(\Kk)}. They are therefore essential, as the following result shows.

    \begin{Prop}[{\cite[Corollary 7.1.13, Proposition 7.2.2, and Theorem 7.2.3]{BCRrea}}] \label{Prop: constructible sets}
        Let $\Kk$ be a real closed field such that $\overline{\Qq}^r \subset \Kk$, $V\subset (\overline{\Qq}^r)^n$ an algebraic set, and
        $W$ a semialgebraic subset of $V$.
        \begin{enumerate}
            \item There exists a unique constructible set $\widetilde{W(\Kk)} \subset \spec{V(\Kk)}$, so that 
            \[
                \widetilde{W(\Kk)} \cap V(\Kk) = W(\Kk).
            \]
            %\item The set of polynomial equalities and inequalities defining $\widetilde{S(\Kk)}$ is the same set of polynomial equalities and inequalities defining $S(\Kk)$. That is, if $S(\Kk)$ is a boolean combination of 
            %\[
            %    U(f_i) = \setrel{v \in V(\Kk)}{f_i(v) > 0},
            %\]
            %where $f_i \in \Kk[V]$, then $\widetilde{S(\Kk)}$ is the same boolean combination of
            %\[
            %    \tilde{U}(f_i) = \setrel{(\r,\Ff_\r) \in \spec{V(\Kk)}}{\r(f_i) > 0}.
            %\]
            \item The semialgebraic set $W(\Kk)$ is closed in $V(\Kk)$ if and only if $\widetilde{W(\Kk)}$ is closed in \spec{V(\Kk)}. Hence, the bijection $W(\Kk) \mapsto \widetilde{W(\Kk)}$ induces an isomorphism of boolean algebras from the family of closed semialgebraic subsets of $V(\Kk)$ onto the family of closed compact subsets of \spec{V(\Kk)}.
        \end{enumerate}
    \end{Prop}
    
    If $W$ is a semialgebraic subset of $V$, the \emph{real spectrum compactification} \rsp{W} of $W$ is the set of closed points of $\widetilde{W(\Rr)}$. When $W$ is closed, then
    \[
        \rsp{W}:= \widetilde{W(\Rr)}\cap \speccl{V(\Rr)}.
    \]

    \begin{Prop}[{\cite[Proposition 2.33]{BIPPthereal}}]
        Let $V\subset (\overline{\Qq}^r)^n$ be an algebraic set and $W\subset V$ a closed semialgebraic subset. The space $W(\Rr)$ is open and dense in the Hausdorff and compact space \rsp{W}.
    \end{Prop}

    \begin{Rem}
        We study the real spectrum compactification of a semialgebraic model of $\Xi(\G,\pslr)$ defined over $\overline{\Qq}^r$, see Subsection \ref{Subsection minimal vectors and character varities}.         
        This model depends on a choice of coordinates. 
        However, any choice of coordinates leads to models that are related by a canonical algebraic isomorphism. 
        Consequently, the real spectrum compactifications of both models are homeomorphic \cite[Proposition 7.2.8]{BCRrea} and the real spectrum compactification $\rsp{\Xi(\G,\pslr)}$ is canonical up to homeomorphism. 
    \end{Rem}
    
    We establish a relationship between $\rsp{\Xi(\G,\pslr)}$ and $\orient{\Xi(\G, \pslr)}$ motivated, in part, by the following result.     

    \begin{Thm}[{\cite[Proposition 7.2]{Bthe}}]
        \Tes \ a continuous surjection from the real spectrum compactification of the space of marked hyperbolic structures on a surface to its length spectrum compactification (Thurston compactification). 
    \end{Thm}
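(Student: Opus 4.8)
The plan is to reprove this by exhibiting the surjection as a composition, as announced in the introduction, rather than by a direct construction. Write $\G=\pi_1(\Sigma)$ for the surface group; the marked hyperbolic structures on $\Sigma$ form the Teichm\"uller locus $\mathcal T\subset\Xi(\G,\pslr)$, which by Goldman's theorem \cite{Gtop} is a union of connected components of the character variety, hence a closed (and open) semi-algebraic subset. By the functoriality of the real spectrum compactification for closed semi-algebraic subsets recalled above (see \cite[Theorem 2.3.9]{Fcha}), $\overline{\mathcal T}^{R.Sp}$ is exactly the closure of $\mathcal T$ inside $\overline{\Xi(\G,\pslr)}^{R.Sp}$, and likewise the closure of $\mathcal T$ inside $\overline{\Xi(\G,\pslr)}^{\,O}$ is the oriented compactification $\overline{\mathcal T}^{\,O}$ of $\mathcal T$.

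First I would restrict the continuous surjection $f\colon\overline{\Xi(\G,\pslr)}^{R.Sp}\to\overline{\Xi(\G,\pslr)}^{\,O}$ provided by Theorem \ref{Thm: there exists a continuous surjection from one compactification to the other} to $\overline{\mathcal T}^{R.Sp}$. Since $f$ restricts to the identity on the common dense interior $\Xi(\G,\pslr)$ and is continuous, $f(\overline{\mathcal T}^{R.Sp})$ is compact and satisfies $\mathcal T\subseteq f(\overline{\mathcal T}^{R.Sp})\subseteq\overline{f(\mathcal T)}^{\,O}=\overline{\mathcal T}^{\,O}$; being a compact subset of the Hausdorff space $\overline{\Xi(\G,\pslr)}^{\,O}$ it is closed, so $f(\overline{\mathcal T}^{R.Sp})=\overline{\mathcal T}^{\,O}$. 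Hence $f$ restricts to a continuous surjection of $\overline{\mathcal T}^{R.Sp}$ onto $\overline{\mathcal T}^{\,O}$. The same compactness argument applied to Wolff's continuous surjection from the oriented compactification onto the length spectrum compactification \cite{Wcon} shows that it restricts to a continuous surjection of $\overline{\mathcal T}^{\,O}$ onto the length spectrum compactification of $\mathcal T$. Composing these, and invoking the classical identification of the length spectrum compactification of Teichm\"uller space with the Thurston compactification \cite{THthe}, one obtains the desired continuous surjection from $\overline{\mathcal T}^{R.Sp}$ onto the Thurston compactification.

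The steps requiring genuine care are the two identifications of restricted compactifications: that $\overline{\mathcal T}^{R.Sp}$ is the real spectrum compactification of $\mathcal T$ in its own right --- which is exactly where one uses that $\mathcal T$ is a connected component, hence a closed, semi-algebraic subset --- and that on the Teichm\"uller locus Wolff's length spectrum compactification agrees with the classical Thurston compactification by projectivized marked length spectra. The surjectivity bookkeeping is then formal, so I expect these two identifications to be the main obstacle. A direct alternative would assign to a boundary point $(\r,\Ff)\in\partial\overline{\mathcal T}^{R.Sp}$ the $\Ff$-valued translation-length function $\g\mapsto\ell(\r(\g))$, pass to the associated real tree (which, by Skora's theorem, is dual to a measured lamination on $\Sigma$) and take its projective class on the Thurston boundary; but then one would have to prove continuity of this assignment by hand, which is precisely the difficulty that factoring through the oriented compactification avoids.
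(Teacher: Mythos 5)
This theorem is cited in the paper from Brumfiel (\cite[Proposition 7.2]{Bthe}); the paper does not supply a proof of it, and Brumfiel's original argument is a direct one (assigning to a boundary point of $\overline{\mathcal T}^{R.Sp}$ the translation-length function of the induced action on the associated $\Lambda$-tree and checking continuity by hand --- essentially the ``direct alternative'' you mention at the end but decide not to pursue). Your argument instead factors through the paper's main result and Wolff's surjection. This is precisely the route the paper advertises in its introduction: ``Thus, Theorem \ref{Thm: there exists a continuous surjection from one compactification to the other} provides a new proof of the continuous surjection between the real spectrum compactification and the compactification by the length spectrum of the set of marked hyperbolic structures on a surface.'' So your proof is correct and is the ``new proof'' the paper hints at, but it is different from the paper's own (cited, direct) proof, and it is logically downstream of the material developed in Sections \ref{Section oriented compactification}--\ref{Section construction of the map}: one cannot use it to establish the statement at the point in the text where it appears.

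Two of your intermediate steps deserve to be spelled out a bit more carefully, though neither is an obstruction. First, for the identification of $\overline{\mathcal T}^{R.Sp}$ with the closure of $\mathcal T$ in $\overline{\Xi(\G,\pslr)}^{R.Sp}$, the needed input is that $\mathcal T$ is a closed semi-algebraic subset of the character variety; Goldman's theorem gives that $\mathcal T$ is a union of connected components, and a connected component of a real algebraic set is semi-algebraic (\cite[Theorem 2.1.11]{BCRrea}, already invoked in the paper), so this is fine via \cite[Theorem 2.3.9]{Fcha}. Second, the corresponding identification on the oriented side --- that the closure of $\mathcal T$ in $\overline{\Xi(\G,\pslr)}^{\,O}$ coincides with Wolff's oriented compactification of $\mathcal T$ --- is not quoted from a reference in the paper, and you should justify it (it follows since both live in $\Xi(\G,\pslr)\cup\mathcal T^o$ with the oriented topology and $\mathcal T$ is clopen in $\Xi(\G,\pslr)$). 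The compactness bookkeeping and the classical identification of the length-spectrum compactification of Teichm\"uller space with the Thurston compactification are as you say.
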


    \begin{Rem}
        There exists also a continuous surjection between the real spectrum compactification and the Weyl chamber length compactification (which generalizes the length spectrum compactification) of higher rank character varieties, where we replace \pslr \ by \pslnr \ in the definition of $\Xi(\G,\pslr)$, see \cite[Theorem 8.2]{BIPPthereal}.
    \end{Rem}

\subsection{Oriented \Rr-trees associated to elements of $\partial\rsp{\Xi(\G,\pslr)}$} \label{Subsection: real tree associated to the real spectrum}

    This section associates to each element $(\r,\Ff_\r)\in \rsp{\Xi(\G,\pslr)}$ a \G-action on an oriented \Rr-tree. Following the approach in \cite{Bthe}, one constructs a \G-action on an \Rr-tree $T$, and use \cite{Btree} to describe the set of germs of oriented segments at any point $P\in T$. This allows us to define an orientation on $T$ and so, a \G-action by isometries preserving the orientation on $T$.
    
    \begin{Def}\label{Def: minimal real closed field}
        Given a representation \func{\p}{\G}{\pslf}, the real closed field \Ff \ is \emph{\p-minimal} if \p \ can not be \pslf-conjugated into a representation \func{\p'}{\G}{\mathrm{PSL}_2(\mathbb{L})}, where $\mathbb{L} \subset \Ff$ is a proper real closed subfield.
    \end{Def}

    If $\func{\p}{\G}{\mathrm{PSL}_2(\Ff)}$ is reductive, denoted $(\p,\Ff)$, and if $\Ff$ is real closed, a minimal real closed field \(\Ff_\p \subset \Ff\) always exists and is unique \cite[Corollary 7.9]{BIPPthereal}. If \(\Ff_1\) and \(\Ff_2\) are two real closed fields, we say that two representations \((\p_1, \Ff_1)\) and
    \((\p_2, \Ff_2)\) are equivalent if there exists a real closed field morphism \(\psi \colon \Ff_1 \to \Ff_2\) such that 
    \[
        \psi \circ \p_1 = g\p_2g^{-1} \quad \text{for some } g\in \mathrm{PSL}_2(\Ff_2).
    \]

    \begin{Thm}[{\cite[Theorem 1.1 and Corollary 7.9]{BIPPthereal}}]\label{Thm: real spectrum as minimal representations in PSL}
        Elements in the boundary of $\rsp{\Xi(\G, \pslr)}$ are in bijective correspondence with equivalence classes of pairs $
        [\p, \Ff]$, where 
        \[
            \func{\p}{\G}{\mathrm{PSL}_2(\Ff)},
        \]
        is reductive and $\Ff$ is real closed, non-Archimedean and \p-minimal. Moreover $\Ff$ is of finite transcendence degree over $\overline{\Qq}^r$.
    \end{Thm}

    An element in the equivalence class is a \emph{representative} of $[\p,\Ff]$.
    Because of the finite transcendence degree condition over $\overline{\Qq}^r$, not all real closed fields in $\rsp{\Xi(\G,\pslr)}$ occur. 

    \begin{Def}\label{Def: big elements}
        Let \Kk \ be a real closed field. An element $b\in \Kk$ is called \emph{big element}\index{Big element}, if \fev \ $c\in \Kk$, \tes \ $k\in \Nn$ that verifies $c < b^k$.
    \end{Def}
    
    \Fev \ $[\p,\Ff]\in \partial \rsp{\Xi(\G,\pslr)}$, the \p-minimal real closed field $\Ff$ is of finite transcendence degree over~$\overline{\Qq}^r$. In particular, $\Ff$ has a big element $\beta$ \cite[Section 5]{Bthe}.
    For every $h \in \Ff$ the two subsets of $\Qq$
    \[
        \setrelfrac{\frac{m}{n}}{\b^m\leq h^n, n\in \Nn_{\geq 0}, m\in \Zz} \text{ and } \setrelfrac{\frac{m'}{n'}}{\b^{m'}\geq h^{n'}, n'\in \Nn_{\geq 0}, m'\in \Zz}
    \]
    define a Dedekind cut of \Qq. Hence the two subsets above define a real number denoted $\log_\b(h)$. The function \func{-\log_\b}{\Ff}{\Rr} is a non-trivial \emph{order compatible valuation} on $\Ff$ \cite[Section 5]{Bthe}. 
    Denote by $\L := \log_\b(\Ff)$ the \emph{valuation group} of $\Ff$, which is an Abelian divisible subgroup of \Rr \ \cite[Section 8]{Bthe}.
    Denote by $\hat{\Ff}$ the \emph{valuation completion} of $\Ff$, which is also real closed \cite[Page 91]{Btree}. 

    \begin{Rem}
        The Robinson field $\Rr_\mu^\u$, where \u \ is any \npul \ and $\mu$ any infinite element or $\Rr^\u$ (Example \ref{Example real closed fields}), is a real closed field with big element $\mu$. Moreover, the valuation group of $\Rr_\mu^\u$ associated to $\log_\mu$ is \Rr.
    \end{Rem}

    To associate to $[\p,\Ff]$ an \Rr-tree, we first need to recall some definitions of \L-metric spaces, see \cite{Cint}.
    A set $X$ together with a function $\mathrm{dist}_X \colon X \times X \to \Lambda$ is a $\Lambda$-\emph{metric space} if $\mathrm{dist}_X$ is positive definite, symmetric and satisfies the triangle inequality.
    A \L-\emph{segment} $\s$ is the image of a $\Lambda$-isometric embedding $ \eta \colon \{{t \in \Lambda \colon } r \leq t \leq s\} \to X$ for some $r \leq s \in \Lambda$. The set of \emph{endpoints of $\s$} is $\{\eta(r),\eta(s)\}$. A \emph{germ of oriented \L-segments} at $P$ in $X$ is an equivalence class of nondegenerate oriented \L-segments based at $P$ for the following equivalence relation: two oriented \L-segments $\s,\s'$ are equivalent \iff  
        \[
        \exists\e > 0 \ \st \ \s|_{[0,\e)} = \s'|_{[0,\e)}.
        \]

    \begin{Def}\label{Def: Lambda tree}
        A $\Lambda$-\emph{tree} is a $\Lambda$-metric space $X$ satisfying:
        \begin{itemize}
            \item \label{defLambdaTree:axiom1}
            For all $x,y \in X$ there is a \L-segment $\s$ with endpoints $x,y$.
            \item \label{defLambdaTree:axiom2}
            For all \L-segments $\s,\s'$ whose intersection $\s \cap \s' = \{x\}$ consists of one common endpoint $x$ of both \L-segments, the union $\s \cup \s'$ is a \L-segment.
            \item \label{defLambdaTree:axiom3} 
            For all \L-segments $\s,\s'$ with a common endpoint $x$, the intersection $\s\cap \s'$ is a \L-segment with $x$ as one of its endpoints.
        \end{itemize}
        By \cite[Chapter 2, Lemma 1.1]{Cint}, segments in a $\Lambda$-tree are unique and we write $\segl{x}{y}$ the unique segment with endpoints $x,y \in X$.
    \end{Def}

    Following the construction in \cite{Btree}, given $[\p,\Ff] \in \partial \rsp{\Xi(\G,\pslr)}$, define the non-Archimedean hyperbolic plane over the \p-minimal real closed field $\mathbb{F}_\p$ as the set 
    \[
        \h(\Ff) := \setrel{x+iy \in \Ff[i]}{x^2+y^2 < 1} \subset \Ff[i],
    \]
    where $i$ is such that $i^2 = -1$.
    As in the real case, $\h(\Ff)$ admits a pseudo-distance \cite[Page 92]{Btree}
    \[  
        \Defmap{\mathrm{dist}_{\h(\Ff)}}{\left(\h(\Ff)\right)^2}{\L}{(z,z')}{\log_\b\left(\frac{\norm{1-(\overline{z}z')}^2}{(1-\norm{z}^2)(1-\norm{z'}^2)}\right)}
    \]
    where $\overline{z}$ is the complex conjugation of $z\in \Ff[i]$, and for any $z=x+iy\in \Ff[i]$:
    \[
        \norm{z}:= \sqrt{x^2+y^2} \in \Ff.
    \]
    Consider the equivalence relation on $\h(\Ff)$ which identifies $z$ with $z'$ if and only if $\Dist{\h(\Ff)}{z}{z'} = 0$ and denote the projection map
    \[
        \func{\pi}{\h(\Ff)}{T{\Ff} := \h(\Ff)/ \setb{\mathrm{dist}_{\h(\Ff)} = 0}}.
    \]
    \begin{Prop}[{\cite[Theorem 28]{Btree}}]
        The \L-metric space $T{\Ff}$ is a \L-tree.
    \end{Prop}

    Moreover, if we consider the action of $\mathrm{PSL}_2(\Ff)$ by Möbius transformation on $\h(\Ff)$, then it passes to an action by isometries on $T{\Ff}$.

    \begin{Prop}[{\cite[Theorem 7.15]{BIPPthereal}}]
        Let $(\p, \Ff)$ be a representative of a point in $\partial \rsp{\Xi(\G, \pslr)}$ such that $\Ff$ is \p-minimal. Then, $\p$ induces a $\G$-action by isometries on $T{\Ff}$, which does not have a global fixed point.
    \end{Prop}

    Since the valuation group~\L \ is a divisible subgroup of \Rr , it is dense inside \Rr. So, as a special case of the base-change functor defined in \cite[Theorem 4.7]{Cint}, define the \Rr-tree
    \[
        \segc{T\Ff}:=\bigcup_{\s \ \L\text{-segment in }T{\Ff}}\overline{\s},
    \]
    where $\overline{\s}$ is the unique metric completion of the \L-segment $\s$ in $T{\Ff}$. 
    The \Rr-tree $\segc{T\Ff}$ is unique up to isometry and $T{\Ff}$ embeds isometrically in \segc{T\Ff}. 

    \begin{Prop}[{\cite[Theorem 4.7 and Corollary 4.9]{Cint}}] \label{Prop: the tree is a real tree without leaves}
        Every $(\p,\Ff)$ representing an element in $\partial \rsp{\Xi(\G,\pslr)}$ induces a \G-action by isometries on the \emph{segment completion} $\segc{T\Ff}$ of the \L-tree $T{\Ff}$. Moreover, $\segc{T\Ff}$ is an \Rr-tree and the \G -action is without fixed point.
    \end{Prop}

    \com{\begin{proof}
        We prove first that $\overline{T\Ff}^{sc}$ is an \Rr-tree. For $x,y\in \overline{T\Ff}^{sc}$, \tes \ $x_\L,y_\L \in T\Ff \subset \overline{T\Ff}^{sc}$ \st , by construction, \tes \ a bi-infinite ray passing through $x$ and~$x_\L$ and a bi-infinite ray passing through $y$ and~$y_\L$. Since $T\Ff$ is a \L -tree, \tes \ a \L -segment between~$x_\L$ and~$y_\L$ which is completed in a real segment in $\overline{T\Ff}^{sc}$. Thus we can find a path between $x$ and $y$ so that $\overline{T\Ff}^{sc}$ is path-connected. 
        Moreover, using the continuity of the Gromov-product, the $0$-hyperbolicity of $T\Ff$ \cite[Chapter 2, Lemma 1.6]{Cint}, the density of $T\Ff$ inside $\overline{T\Ff}^{sc}$ and the fact that $0$-hyperbolicity is a closed condition, we obtain that the metric space~$\overline{T\Ff}^{sc}$ is $0$-hyperbolic. Hence~$\overline{T\Ff}^{sc}$ is a path-connected metric space which is $0$-hyperbolic and thus, an \Rr-tree \cite[Chapter 2, Lemma 4.13]{Cint}. We prove now that $\overline{T\Ff}^{sc}$ is without leaves. Consider a segment \func{s}{[a,b]}{T\Ff}. The elements $s(a)$ and $s(b)$ lift to two elements in the non-Archimedean hyperbolic plane~$\h(\Ff)$ which are contained in a unique bi-infinite \Ff -ray \func{r}{\Ff}{\h(\Ff)}. Now~$r$ projects to a bi-infinite \L-ray in~$T\Ff$ that contains the segment $s$. Hence $T\Ff$ is a \L-tree without leaves and by construction,~$\overline{T\Ff}^{sc}$ is without leaves too.
    \end{proof}}

    Denote by 
    \[
        T_\p \subset \segc{T\Ff}
    \]
    the unique, up to isometry, \p -invariant minimal subtree \cite[Proposition 2.4]{Pthe}. Then, as in Remark \ref{Rem: minimal invariant subtree has extendible segments}, $T_\p$ has extendible segments. 
    To endow $T_\p$ with an orientation in the sense of Definition \ref{Def: orientation on a real tree}, we describe the germs of oriented \L-segments in $T\Ff$ as described in \cite{Btree}.
    Denote by 
    \[ 
        \mathcal{O} := \setrelfrac{h\in \Ff}{\log_\b(|h|)\leq 0},
    \]
    the valuation ring of $\Ff$, where $|h|=\max \set{h,-h}$, and its maximal ideal 
    \[
        \mathcal{J} := \setrel{ h \in \Ff}{\log_\b(|h|) < 0}.
    \]
    The quotient field $\Ff_{\mathcal{O}}:= \mathcal{O}/\mathcal{J}$ inherits an order such that $\mathcal{O} \rightarrow \Ff_\mathcal{O}$ is order preserving \cite[Page 90]{Btree} and $\Ff_\mathcal{O}$ is real closed \cite[Example 5.2]{BPmaximalrepresentations}.
    As the segments in $\segc{T\Ff}$ are in one to one correspondence with the $\L$-segments in $T\Ff$ by construction, the following result holds.
    
    \begin{Prop}[{\cite[Corollary 40]{Btree}}] \label{Prop: germs of rays in the segment completion}
        Let $p\in \segc{T\Ff}$. The germs of oriented segments at $p$ in $\segc{T\Ff}$ correspond bijectively with points on the circle 
        \[
            \circle(\Ff_{\mathcal{O}}):= \setrelfrac{x+iy \in \Ff_{\mathcal{O}}[i]}{x^2+y^2 = 1}.
        \]
    \end{Prop}

    \begin{Rem}
        If $p\in \segc{T\Ff}\backslash T\Ff$, then by construction, $\mathcal{G}_{\segc{T\Ff}}(p)$ consists of two elements. Thus, a cyclic order on $\mathcal{G}_{\segc{T\Ff}}(p)$ is trivial. Therefore, in the rest of this text, whenever we refer to a cyclic order on $\mathcal{G}_{\segc{T\Ff}}(p)$, we assume that $p\in T\Ff \subset \segc{T\Ff}$.
    \end{Rem}
    
    The following remark summarizes the construction of the correspondence from Proposition \ref{Prop: germs of rays in the segment completion}. We will use it later to show that certain isometries are orientation preserving.

    \begin{Rem}[{\cite[Page 102]{Btree}}] \label{Rem: correspondence between the boundary circles}
        Let $\hat{\Ff}$ denote the valuation completion of $\Ff$. The field inclusion $\Ff \rightarrow \hat{\Ff}$ induces an isometry $T\hat{\Ff} \rightarrow T\Ff$ \cite[Corollary 26]{Btree}, which we use to define the projection
        \[
            \func{\pi}{\h\big(\hat{\Ff}\big)}{T\Ff}.
        \]
        Given $p\in T\Ff$, choose an element $P \in \pi^{-1}(p)$. Any $[\seg{p}{y}] \in \mathcal{G}_{T\Ff}(p)$ is represented by a segment \segl{p}{y}, which is the image under $\pi$ of some segment $(P\text{-}Y)_{\Ff} \subset \h(\hat{\Ff})$.
        Thus the transitive $\mathrm{SO}(2,\hat{\Ff})$-action on the set of oriented $\Ff$-segments at $P$ passes via $\pi$ to a transitive $\mathrm{SO}(2,\hat{\Ff})$-action on the set of oriented $\L$-segments at $p$. The stabilizer of the germ of oriented segments $[\seg{p}{y}]$ is the kernel of 
        \[
            \mathrm{SO}\big(2,\hat{\Ff}\big) \rightarrow \mathrm{SO}(2,\Ff_{\hat{\mathcal{O}}})=\mathrm{SO}(2,\Ff_{\mathcal{O}}).
        \]
        Brumfiel shows using this construction that the germs of oriented \L-segments at $p$ in $T\Ff$ is in bijection with $\mathrm{SO}(2,\Ff_\mathcal{O})$, which is also in bijection with $\circle(\Ff_{\mathcal{O}})$.
    \end{Rem}

    \begin{Cor}\label{Cor: Cyclic order on the space of ends of TF}
        The $\Ff_\mathcal{O}$-extension of the cyclic order $o_\Rr$ (from Lemma \ref{Lem: orientation as sign of determinant}) defines a cyclic order on $\mathcal{G}_{T\Ff}(p)\cong \circle(\Ff_{\mathcal{O}})$ for every $p\in \segc{T\Ff}$. Thus an orientation on $\segc{T\Ff}$ denoted $or_\mathcal{O}$.
    \end{Cor}

    \begin{Rem}\label{Rem: computation of the orientation from circle(F)}
        Consider $p\in T\Ff$, $\cseg{p}{y_1}, \cseg{p}{y_2}, \cseg{p}{y_3} \in \mathcal{G}_{T\Ff}(p)$ distinct, $\pi_{\mathcal{O}}\colon \hat{\mathcal{O}}\rightarrow \Ff_{\hat{\mathcal{O}}} = \Ff_{\mathcal{O}}$ the reduction morphism, and 
        \[
            \func{\pi_\mathcal{O}'}{\circle\big(\hat{\Ff}\big)}{\circle(\Ff_\mathcal{O})}
        \]
        the induced map, using that $\mathrm{SO}(2,\hat{\Ff})=\mathrm{SO}(2,\hat{\mathcal{O}})$ and $\mathrm{SO}(2,\Ff_{\hat{\mathcal{O}}})=\mathrm{SO}(2,\Ff_\mathcal{O})$ \cite[Page 102]{Btree}.
        By Proposition \ref{Prop: germs of rays in the segment completion}, \cseg{p}{y_1}, \cseg{p}{y_2}, \cseg{p}{y_3} correspond bijectively to elements of $\circle(\Ff_\mathcal{O})$ which we denote by $y_1^c$, $y_2^c$, $y_3^c$.
        For $P \in \pi^{-1}(p)$, one can consider three germs of oriented segments $\cseg{P}{Y_1}, \cseg{P}{Y_2}, \cseg{P}{Y_3} \in \mathcal{G}_{\h(\hat{\Ff})}(P)$ with corresponding elements of $\circle(\hat{\Ff})$ denoted by $Y_1^c$, $Y_2^c$, $Y_3^c$ such that 
        \[
            \pi'_\mathcal{O}(Y_i^c)=y_i^c \quad \forall i\in \set{1,2,3}.
        \]
        By the identification from Remark \ref{Rem: correspondence between the boundary circles}, and because the morphism $\pi_{\mathcal{O}}\colon\hat{\mathcal{O}}\rightarrow \Ff_\mathcal{O}$ is order preserving, it holds
        \begin{align*}
            o_{\Ff_{\mathcal{O}}}(\pi'_\mathcal{O}(Y_1^c),\pi'_\mathcal{O}(Y_2^c),\pi'_\mathcal{O}(Y_3^c))&=
                \mathrm{sgn}_{\Ff_\mathcal{O}} \mathrm{det}\left(\pi'_\mathcal{O}(Y_2^c)-\pi'_\mathcal{O}(Y_1^c),\pi'_\mathcal{O}(Y_3^c)-\pi'_\mathcal{O}(Y_2^c)\right) \\
                & = \mathrm{sgn}_{\Ff_\mathcal{O}} \mathrm{det}(\pi_\mathcal{O}(Y_2^c-Y_1^c),\pi_{\mathcal{O}}(Y_3^c-Y_2^c)) \\
                & = \mathrm{sgn}_{\hat{\Ff}} \mathrm{det}(Y_2^c-Y_1^c,Y_3^c-Y_2^c) \\
                & = o_{\hat{\Ff}}(Y^c_1,Y^c_2,Y^c_3).
        \end{align*}
    \end{Rem}

    \begin{Not}\label{Not: orientation on the minimal tree}
        Since $T_\p$ is an \Rr-subtree of $\segc{T\Ff}$, the orientation $or_{\mathcal{O}}$ on $\segc{T\Ff}$ restricts to an orientation on $T_\p$. For every $p\in T_\p$, denote by 
        \[
            or_\p(p)
        \]
        the cyclic order on the germs of oriented segments at $p$ in $T_\p$ and by $or_\p$ the induced orientation.
    \end{Not} 

    It remains to show that the oriented \Rr-tree $(T_\p,or_\p)$ associated with $[\p,\Ff]$ does not depend of the choice of representatives, for the equivalence class defined in Theorem \ref{Thm: real spectrum as minimal representations in PSL}. To this end, we will use the following theorem. The isometry it involves is known from \cite[Proof of Corollary 5.19]{BIPPthereal}; our contribution is to show that this isometry preserves the orientation.

    \begin{Thm} \label{Thm: valuation compatible field monomorphism implies embedding of trees}
        Let $\func{\psi}{\Ff_1}{\Ff_2}$ be an ordered field morphism between two real closed fields and
        \[
            \func{\p_1}{\G}{\mathrm{PSL}_2(\Ff_1)} \quad \text{and} \quad \func{\p_2}{\G}{\mathrm{PSL}_2(\Ff_2)}
        \]
        two representations such that
        \[
            \psi_\p \circ \p_1 = g\p_2g^{-1} \text{ for some } g\in \mathrm{PSL}_2(\Ff_2),
        \]
        where $\psi_\p$ is the natural inclusion of $\mathrm{PSL}_2(\Ff_1)$ in $\mathrm{PSL}_2(\Ff_2)$ given by $\psi$.
        Let $\beta$ be a big element of $\Ff_1$ and suppose that $\psi(\beta)$ is a big element of $\Ff_2$. Then there exists a \G -equivariant isometry $(\p_1,T_{\p_1},or_{{\p_1}})\rightarrow(\p_2,T_{\p_2},or_{{\p_2}})$ which preserves the orientation.
    \end{Thm}

    \begin{proof}
        As defined above, consider the pseudo-distance $\mathrm{dist}_{\h(\Ff_1)}$ and $\mathrm{dist}_{\h(\Ff_2)}$ induced by the big elements $\b$ and $\psi(\b)$ respectively. By definition of the pseudo-distances, the field morphism $\psi$ induces a pseudo-distance preserving map $\psi_B\colon\h(\Ff_1) \to \h(\Ff_2)$ that sends $x+iy \in \h(\Ff_1)$ to $\psi(x)+i\psi(y) \in \h(\Ff_2)$. Denote by 
        \[
            \defmap{\psi_T}{T{\Ff_{1}}}{T{\Ff_{2}}}{[x+iy]}{[\psi(x)+i\psi(y)]}
        \]
        the induced isometric embedding between the associated \L -trees (as done in \cite[Corollary 5.19]{BIPPthereal}). Denote still by $\func{\psi_T}{\segc{T\Ff_1}}{\segc{T\Ff_2}}$ the unique continuous extension of $\psi_T$.
        Since $\psi$ is a morphism and $\mathrm{PSL}_2(\Ff_1)$, $\mathrm{PSL}_2(\Ff_2)$ act by Möbius transformations on $T\Ff_1$, $T\Ff_2$ respectively, then for every $\g \in \G$
        \[
            \psi_T(\p_1(\g) z)=\psi_\p(\p_1)(\g)\psi_T(z) \quad \forall z\in \segc{T\Ff_1}.
        \]
        We show that $g^{-1}\psi_T$ induces a \G-equivariant isometry between $T_{\p_1}$ and $T_{\p_2}$.
        Denote by $A_{\p_1(\g)}$ the translation axis of $\p_1(\g)$. Since $\psi_T$ is an isometric embedding, by \cite[Theorem 1.2]{Pthe}, we obtain
        \begin{align*}
            \psi_T\left(A_{\p_1(\g)}\right)&=\setrelfrac{\psi_T(z)\in T\Ff_2}{\Dist{T\Ff_1}{z}{\p_1(\g)z} = \mathrm{lg}(\p_1(\g))} \\
            & =\setrelfrac{\psi_T(z)\in T\Ff_2}{\Dist{T\Ff_2}{\psi_T(z)}{\psi_\p(\p_1)(\g)\psi_T(z)} = \mathrm{lg}(\psi_\p(\p_1)(\g)}.
        \end{align*}
        Hence $\psi_T(A_{\p_1(\g)}) \subset A_{\psi_\p(\p_1)(\g)} $.
        By \cite[Proposition 2.4]{Pthe}
        \[
            g^{-1}\psi_T(T_{\p_1})=\bigcup_{\g \in \G} g^{-1}\psi_T(A_{\p_1(\g)}) \subset \bigcup_{\g \in \G} g^{-1} A_{\psi_\p(\p_1)(\g)}.
        \]
        Finally, using \cite[Remark 1.4]{Pthe}
        \[
            \bigcup_{\g \in \G} g^{-1} A_{\psi_\p(\p_1)(\g)}=\bigcup_{\g \in \G} A_{g \psi_\p(\p_1)(\g)g^{-1}} = \bigcup_{\g \in \G} A_{\p_2(\g)}=T_{\p_2}.
        \]
        Since $\mathrm{PSL}_2(\Ff_2)$ acts by isometries on $T_{\p_2}$, we obtain that 
        \[
            \func{g^{-1}\psi_T}{T_{\p_1}}{T_{\p_2}}
        \]
        is an isometric embedding.
        In addition, for any $z\in T_{\p_1}$ and any $\g\in \G$
        \begin{align*}
            g^{-1}\psi_T(\p_1(\g)z)&=g^{-1}\psi_\p(\p_1)(\g)\psi_T(z) \\
                                &=g^{-1}\psi_\p(\p_1)(\g)g g^{-1}\psi_T(z) \\
                                &=\p_2(\g)g^{-1}\psi_T(z),
        \end{align*}
        so that $g^{-1} \psi_T$ is \G-equivariant. 
        In particular, $g^{-1}\psi_T(T_{\p_1})$ is a \G-invariant \Rr-subtree of $T_{\p_2}$. By uniqueness of the minimal \G-invariant \Rr-subtree, up to isometry, $g^{-1}\psi_T$ is a \G-equivariant isometry.
        
        We show that $g^{-1}\psi_T$ is orientation preserving.  
        As in Remark \ref{Rem: computation of the orientation from circle(F)}, consider $p\in T\Ff_1$ and $y_1^c, y_2^c, y_3^c \in \mathcal{G}_{T\Ff_1}(p)\cong \circle(\Ff_{\mathcal{O}_1})$ distinct, $P \in \pi^{-1}(p)$ and three elements $Y_1^c,Y_2^c,Y_3^c\in \mathcal{G}_{\h(\Ff_1)}(P)$ such that $\pi_{\mathcal{O}_1}'(Y_i^c)=y_i^c$ for every $i\in\set{1,2,3}$.
        Since $\mathrm{PSL}_2(\Ff_2)$ preserves the cyclic orders $or_{\Ff_2}(\psi_B(P))$ and $or_{\mathcal{O}_2}(\psi_T(p))$, it is enough to prove that $\psi_T$ preserves the orientation.
        %$\psi$ is order preserving, $\mathrm{PSL}_2(\Ff_1)$ preserves the cyclic orders $or_{\Ff_1}(P)$ and $or_{\mathcal{O}_1}(p)$, and 
        Since $\psi_B$ is distance preserving there exist group morphisms $\psi_B^c,\psi_T^c$ induced by $\psi$ such that the following diagram commutes:
        % https://q.uiver.app/#q=WzAsNCxbMCwwLCJcXG1hdGhybXtTT30oMixcXG1hdGhiYntGfV8xKSJdLFsxLDAsIlxcbWF0aHJte1NPfSgyLFxcbWF0aGJie0Z9XzIpIl0sWzAsMSwiXFxtYXRocm17U099KDIsXFxtYXRoYmJ7Rn1fe1xcbWF0aGNhbHtPfV8xfSkiXSxbMSwxLCJcXG1hdGhybXtTT30oMixcXG1hdGhiYntGfV97XFxtYXRoY2Fse099XzJ9KSJdLFswLDEsIlxccHNpX3tCfV5jIl0sWzAsMiwiXFxwaV97XFxtYXRoY2Fse099XzF9IiwyXSxbMSwzLCJcXHBpX3tcXG1hdGhjYWx7T31fMn0iXSxbMiwzLCJcXHBzaV9UXmMiLDJdXQ==
        \[\begin{tikzcd}
	       {\mathrm{SO}(2,\mathbb{F}_1)} & {\mathrm{SO}(2,\mathbb{F}_2)} \\
	       {\mathrm{SO}(2,\mathbb{F}_{\mathcal{O}_1})} & {\mathrm{SO}(2,\mathbb{F}_{\mathcal{O}_2})}.
	       \arrow["{\psi_{B}^c}", from=1-1, to=1-2]
	       \arrow["{\pi'_{\mathcal{O}_1}}"', from=1-1, to=2-1]
	       \arrow["{\pi'_{\mathcal{O}_2}}", from=1-2, to=2-2]
	       \arrow["{\psi_T^c}"', from=2-1, to=2-2]
        \end{tikzcd}\]
        Using the identification $\mathrm{SO}(2,\Ff_\ast) \cong \circle(\Ff_\ast)$ for $\ast \in \set{1,2,\mathcal{O}_1,\mathcal{O}_2}$, it holds
        
        \begin{align*}
            or_{{\mathcal{O}_2}}(\psi_T(p))(\psi_T^c(y_1^c),\psi_T^c(y_2^c),\psi_T^c(y_3^c)) &= o_{\Ff_2}(\psi_B^c(Y^c_1),\psi_B^c(Y^c_2),\psi_B^c(Y^c_3)) \\
            & = \mathrm{sgn}_{\Ff_2} \mathrm{det}_{\Ff_2} (\psi(Y^c_2-Y^c_1),\psi(Y^c_3-Y^c_2)) \\
            & =  \mathrm{sgn}_{\Ff_2} \psi\left(\mathrm{det}_{\Ff_1} (Y^c_2-Y^c_1,Y^c_3-Y^c_2)\right) \\
            & = \mathrm{sgn}_{\Ff_1} \left(\mathrm{det}_{\Ff_1} (Y^c_2-Y^c_1,Y^c_3-Y^c_2)\right) \\
            & = or_{{\mathcal{O}_1}}(p)(y_1^c,y_2^c,y_3^c),
        \end{align*}
        where the first equality holds because the above diagram is commutative, the second because $\psi_B^c$ is induced by $\psi$, the third because $\psi$ is a homomorphism, and the fourth because $\psi$ preserves the order.
        Thus, $g^{-1}\psi_T$ is an orientation preserving \G-equivariant isometry.
    \end{proof}
    
    For two representatives $(\p_1,\Ff_{1}), (\p_2,\Ff_{2})$ of $[\p,\Ff] \in \partial\rsp{\Xi(\G,\pslr)}$, there exists a real closed field morphism \(\psi \colon \Ff_{1} \to \Ff_2\) and $g\in \mathrm{PSL}_2(\Ff_2)$ such that 
    \[
        \psi_\p \circ \p_1 = g\p_2 g^{-1}.
    \]
    By Theorem \ref{Thm: valuation compatible field monomorphism implies embedding of trees}, there exists an orientation preserving \G-equivariant isometry between $T_{\p_1}$ and $T_{\p_2}$ equipped with their orientations $or_{{\p_1}}$ and $or_{{\p_2}}$ respectively. Thus the associated oriented \Rr-tree to an element of $\partial\rsp{\Xi(\G,\pslr)}$ is canonical.

    \begin{Lem}\label{Lem: associated oriented real tree for an element in the real spectrum}
        Each class $[\p,\Ff] \in \partial\rsp{\Xi(\G,\pslr)}$ defines a canonical \G -action by orientation preserving isometries on a \p-minimal oriented \Rr-tree $(\p,T_\p,or_\p)$. %Moreover, the orientation is \G -invariant.
    \end{Lem}

    \com{Any real closed field $\Ff$ of finite transcendence degree over $\overline{\Qq}^r$, thus any field of definition \Ff \ of a point in the real spectrum of some algebraic set, admits a valuation compatible field injection into a Robinson field \cite[Remark 5.20]{BIPPthereal}. More precisely, if $\u$ is an ultrafilter over $\mathbb{N}$ and $(x_k)\subset V$, denote by $x^{\u} \in V(\Rr^{\u})$ the associated point where
    \[
        x^{\u} := \left( (x^{\u})^{1}, \ldots, (x^{\u})^{n} \right) \quad \text{with} \quad (x^{\u})^{i} = \left[\left(x_k^{i}\right)\right]_{\u}.
    \]
    Let $\mu \in \Rr^{\u}$ be a positive infinite element such that $|(x^{\u})^{i}| < \mu$ for all $1 \leq i \leq n$. 
    Then $x^{\u} \in (\mathcal{O}_\mu)^n$ and denote $x_{\mu}^{\u} \in V({\Rr_{\mu}^{\u}})$ the image of $x^{\u}$ under reduction modulo $\mathcal{J}_\mu$, as in Example \ref{Example real closed fields}. 
    By \cite[Subsection 3.1]{BIPPthereal}
    \[
        \defmap{ev\left(x^\u_\mu\right)}{\overline{\Qq}^r[V]}{\Rr_\mu^\u}{f}{[f(x_k)]_\u}
    \]
    induces an ordered field morphism $\Ff_\p \rightarrow \Rr_\mu^\u$,
    and the following accessibility result:

    \begin{Lem}[{\cite[Lemma 3.8]{BIPPthereal}}]
        Let $V \subset (\overline{\Qq}^r)^n$ be an algebraic set, $W\subset V$ a semialgebraic subset, $(\p,\Ff_\p)\in \rsp{W}$, and \u \ a \npul.
        If $(x_k) \subset W(\Rr)$ verifies $\lim x_k = (\p,\Ff_\p)$ and $\mu = [N(x_k)^2]\in (\overline{\Qq}^r)^\u$, see Example \ref{Example real closed fields}, then 
        \[
            (\p,\Ff_\p)=\left(ev\left(x_\mu^\u\right),\Rr_\mu^\u\right).
        \]
    \end{Lem}}

\subsection{Description via asymptotic cones}\label{Subsection: asymptotic cones for the real spectrum}
    
    This subsection describes the \G-action on an oriented \Rr-tree induced by an element $[\p,\Ff] \in \partial\rsp{\Xi(\G,\pslr)}$ as an ultralimit of \G-actions on \h, using asymptotic cones and the ultralimit orientation. To do so, we first recall, as in \cite[Section 7]{BIPPthereal}, that $[\p,\Ff]$ is represented by a representation of \G \ in $\mathrm{PSL}_2(\Rr_\mu^\u)$, where $\Rr_\mu^\u$ is a Robinson field as described in Example \ref{Example real closed fields}.
    \begin{Def}
        Given \u \ a non-principal ultrafilter, the sequence of scales $(\mu_k)$ is \emph{well adapted} to a sequence of representations \func{\p_k}{\G}{\pslr} if there exists $c_1,c_2\in \Rr_{>0}$ such that in the ultraproduct $\Rr^\u$
    \[
        c_1(\mu_k) \leq \sum_{\g \in F}\left(\mathrm{tr}\left(\p_k(\g)\p_k(\g)^T \right)\right) \leq c_2(\mu_k). 
    \]
    If only the second inequality holds, then the sequence $(\mu_k)$ is \emph{adapted}.
    \end{Def}
    Let \u \ be a non-principal ultrafilter and $(\mu_k)$ a sequence of scalars adapted to the sequence of representations $\func{\p_k}{\G}{\pslr}$. 
    Denote by $\func{\p_\mu^\u}{\G}{\mathrm{PSL}_2(\Rr_\mu^\u)}$ the \emph{$(\u,\mu)$-limit representation} 
    \[
        \p_\mu^\u(\g):=\begin{pmatrix}
            (\p_k(\g)^{1,1})_k & (\p_k(\g)^{1,2})_k \\
            (\p_k(\g)^{2,1})_k & (\p_k(\g)^{2,2})_k
        \end{pmatrix} \in \mathrm{PSL}_2\left(\Rr_\mu^\u\right).
    \]
    
    \begin{Thm}[{\cite[Theorem 7.16]{BIPPthereal}}]\label{Thm: real spectrum as representation in robinson fields}
        Let \u \ be a non-principal ultrafilter on \Nn, $(\p_k,\Rr)_k \in \mathcal{M}_\G(\Rr)$ and $(\p_{\mu}^{\u}, \Rr_{\mu}^{\u})$ its $(\u,\mu)$-limit representation for an adapted sequence of scales $\mu := (\mu_k)$. Then:
        \begin{itemize}
            \item $\p_\mu^{\u}$ is reductive, and
            \item if $\mu$ is well adapted, infinite, and $\Ff_{\p_\mu^{\u}}$ denotes the $\p_\mu^{\u}$-minimal field, then $(\p_\mu^{\u}, \Rr_\mu^{\u})$ is $\mathrm{SO}_2(\Rr_\mu^{\u})$-conjugate to a representation $(\p, \Ff_{\p_\mu^{\u}})$ that represents an element in $\partial \rsp{\Xi(\G,\pslr)}.$
        \end{itemize}
        Conversely, any $(\p,\Ff)$ representing an element in $\partial \rsp{\Xi(\G,\pslr)}$ arises in this way. More precisely, for any non-principal ultrafilter \u \ and any sequence of scales $\mu$ giving an infinite element, there exist an order preserving field morphism $\psi\colon \Ff \rightarrow \Rr_\mu^{\u}$ and a sequence of homomorphisms $(\p_k,\Rr)_k\in \mathcal{M}_\G(\Rr)$ for which $\mu$ is well adapted and such that $\psi \circ \p$ and $\p_\mu^{\u}$ are $\mathrm{PSL}_2(\Rr_\mu^\u)$-conjugate.
    \end{Thm}

    To keep the notation from \cite{BIPPthereal}, we use an alternative model of \h, which is defined by matrices
    \[
        \symtwor := \setrel{A\in M_{2\times 2}(\Rr)}{\mathrm{det}(A)=1, A \text{ is symmetric and positive definite}}.
    \]
    Endow \symtwor \ with its semialgebraic multiplicative Cartan distance $D_{\mathcal{P}^1}$ and its associated distance $\mathrm{dist}_{\symtwor}=\log D_{\mathcal{P}^1}$. The $\Rr_\mu^\u$-extension of $D_{\mathcal{P}^1}$ induces  a pseudo-distance $\mathrm{dist}_{\mathcal{P}^1(2,\Rr_\mu^\u)}=\log_\mu (D_{\mathcal{P}^1})_{\Rr_\mu^\u}$ on $\mathcal{P}^1(2,\Rr^\u_\mu)$ using the transfer principle (Theorem \ref{Thm transfer principle}). The group $\pslr$ acts by congruence on \symtwor :
    \[
        g.x=gxg^T \quad \forall x\in\symtwor, \ \forall g\in \pslr 
    \]
    and preserves the distance. Similarly, $\mathrm{PSL}_2(\Rr_\mu^\u)$ acts by congruence on $\mathcal{P}^1(2,\Rr^\u_\mu)$ and preserves the pseudo-distance \cite[Subsection 5.1]{BIPPthereal}. 

    Let \u \  be a non-principal ultrafilter on \Nn, $(\l_k)\subset \Rr$ a sequence such that $\l_k \rightarrow \infty$, $\mu_k=e^{\l_k}$ for every $k\in \Nn$, $\mu= (\mu_k)_{k\in \Nn}$, $\Rr_\mu^\u$ the Robinson field associated to $\u$, and denote by 
    \begin{align*}
        \mathcal{P}^1(2,\Rr)^\u&:=\symtwor^\Nn/\sim, \\
        \mathcal{P}^1(2,\Rr)^\u_\lambda &:= \setrelfrac{[x_k]\in \symtwor^\u}{\frac{\Dist{\symtwor}{x_k}{\mathrm{Id}}}{\l_k} \text{ is } \u\text{-bounded}},
    \end{align*}
    where $(x_k)\sim (y_k)$ if the two sequences coincide \u -almost surely.
    Note that if we endow $\mathcal{P}^1(2,\Rr)^\u_\lambda$ with the pseudo-distance 
    \[
        \defmap{\mathrm{dist}^{\u}}{\mathcal{P}^1(2,\Rr)^\u_\lambda \times \mathcal{P}^1(2.\Rr)^\u_\lambda}{\Rr}{([x_k],[y_k])}{\lim_\u \frac{\Dist{\symtwor}{x_k}{y_k}}{\l_k}},
    \] 
    then $\mathcal{P}^1(2,\Rr)^\u_\lambda/\set{\mathrm{dist}_{\u}=0}=\Cone{\u}{\symtwor}{\l_k}{\mathrm{Id}}$, see Subsection \ref{Subsection: Asymptotic cones}.
        
    \begin{Thm}[{\cite[Theorem 5.10 and Lemma 5.12]{BIPPthereal}}] \label{Thm: Isometry between the lambda building and the asymptotic cone bipp}
        With the above notation, the map 
        \[
            \defmap{\Psi_{\mathcal{P}^1}}{\mathcal{P}^1(2,\Rr)^\u_\lambda}{\mathcal{P}^1\left(2,\Rr^\u\right)}{\left(\setb{x^{i,j}}^{i,j}_k\right)_k}{\setB{\left(x_k^{i,j}\right)_k}^{i,j}}
        \]
        induces an isometry between $T_{\mathcal{P}^1}\Rr_\mu^\u:=\mathcal{P}^1(2,\Rr_\mu^\u) / \set{\mathrm{dist}_{\mathcal{P}^1(2,\Rr_\mu^\u)}=0}$
        and the asymptotic cone $T^\u_{\mathcal{P}^1}:=\Cone{\u}{\symtwor}{\l_k}{\mathrm{Id}}$. 
    \end{Thm}

    \begin{Rem}
        The induced isometry is obtained using the commutative diagram:
        % https://q.uiver.app/#q=WzAsNSxbMCwwLCJcXG1hdGhjYWx7UH1eMSgyKV5cXG1hdGhmcmFre3V9X1xcbGFtYmRhIl0sWzIsMCwiXFxtYXRoY2Fse1B9XjEoMixcXG1hdGhiYntSfV5cXG1hdGhmcmFre3V9KShcXG1hdGhjYWx7T31fXFxtdSkiXSxbMiwxLCJcXG1hdGhjYWx7UH1eMVxcbGVmdCgyLFxcbWF0aGJie1J9XlxcbWF0aGZyYWt7dX1fXFxtdVxccmlnaHQpIl0sWzIsMiwiVFxcbWF0aGJie1J9XlxcbWF0aGZyYWt7dX1fXFxtdSJdLFswLDIsIlxcbWF0aHJte0NvbmV9KFxcbWF0aGNhbHtQfV4xKDIpLChcXGxhbWJkYV9rKSwoXFxtYXRocm17SWR9KSkiXSxbMSwyLCJcXHBpX1xcbXUiXSxbMiwzXSxbMCwxLCJ4XFxtYXBzdG8gXFx2YXJwaGkoeCkiXSxbMCw0XSxbNCwzLCJcXGV0YSJdXQ==
        \[\begin{tikzcd}
	        {\mathcal{P}^1(2,\Rr)^\mathfrak{u}_\lambda} && {\mathcal{P}^1(2,\mathbb{R}^\mathfrak{u})\cap (\mathcal{O}_\mu)^3} \\
	        && {\mathcal{P}^1\left(2,\mathbb{R}^\mathfrak{u}_\mu\right)} \\
	        {T^\u_{\mathcal{P}^1}} && {T_{\mathcal{P}^1}\Rr_\mu^\u,}
	        \arrow["{\pi_\mu}", from=1-3, to=2-3]
	        \arrow[from=2-3, to=3-3]
	        \arrow[ from=1-1, to=1-3]
	        \arrow[from=1-1, to=3-1]
	        \arrow["\eta", from=3-1, to=3-3]
        \end{tikzcd}\]
        where $\mathcal{P}^1(2,\mathbb{R}^\mathfrak{u})$ is a semialgebraic subset of $(\mathbb{R}^\mathfrak{u})^3$ and $\pi_\mu$ is induced by the quotient map ${\mathcal{O}_\mu}\rightarrow{\Rr_\mu^\u}$, see Example \ref{Example real closed fields}.
    \end{Rem}

    The group \slr \ acts on \symtwor \ by isometries and the stabilizer of $\mathrm{Id}\in \symtwor$ is the subgroup $\mathrm{SO}(2)$.
    Thus both \h \ and $\symtwor$ are models for the symmetric space of \slr. Hence up to rescaling the distance on \h, there exists a \slr -equivariant isometry $\h \to \symtwor$ which is algebraic \cite[Page 134]{Estructureofmanifoldsofnonpositivecurvature}.

    \begin{Rem}
        In Section \ref{Section oriented compactification} and \ref{Section real spectrum}, \h \ is endowed with a metric proportional to its standard one. In particular, we endow \h \ with a metric such that it is isometric to \symtwor \ endowed with $\mathrm{dist}_{\mathcal{P}^1(2,\Rr)}$, see \cite[Subsection 5.2]{BIPPthereal}.
    \end{Rem}

    As in Subsection \ref{Subsection: Asymptotic cones}, the isometry $\h \to \symtwor$ leads, on the one hand, to a \slr-equivariant isometry $(\h)_\l^\u \rightarrow \mathcal{P}^1(2,\Rr)^\mathfrak{u}_\lambda$ and, on the other hand, since it is semialgebraic, to a \slr-equivariant isometry $\mathcal{P}^1(2,\mathbb{R}^\mathfrak{u})\cap (\mathcal{O}_\mu)^3 \rightarrow \h(\Rr^\u)\cap (\mathcal{O}_\mu)^3$ to give:

    \begin{Cor}\label{Cor: Gamma equivariant isometry between robinson tree and asymptotic cone}
        With the above notation, the map 
        \[
            \defmap{\Psi}{(\h)^\u_\lambda}{\h\left(\Rr^\u\right)}{\left(x+iy\right)_k}{(x_k)+i(y_k)}
        \]
        induces an isometry between the asymptotic cone $T^\u:=\Cone{\u}{\h}{\l_k}{0}$ and $\h(\Rr^\u_\mu) / \set{\mathrm{dist}_{\h(\Rr^\u_\mu)}=0} =: T\Rr_\mu^\u$. 
        Moreover, with the notations from Theorem \ref{Thm: real spectrum as representation in robinson fields}, the isometry is \G -equivariant for the induced \G -actions by $\p^\u=\lim_\u \p_k$ on $T^\u$ and by $\p_\mu^\u$ on $T\Rr_\mu^\u$. 
    \end{Cor}

    \begin{proof}
        The first part of the corollary is a consequence of Theorem \ref{Thm: Isometry between the lambda building and the asymptotic cone bipp} and the above mentioned isometries between the models of the hyperbolic plane  \cite[Page 134]{Estructureofmanifoldsofnonpositivecurvature}. It gives the commutative diagram:
% https://q.uiver.app/#q=WzAsOSxbMSwwLCJcXG1hdGhjYWx7UH1eMSgyKV5cXG1hdGhmcmFre3V9X1xcbGFtYmRhIl0sWzMsMCwiXFxtYXRoY2Fse1B9XjEoMixcXG1hdGhiYntSfV5cXG1hdGhmcmFre3V9KShcXG1hdGhjYWx7T31fXFxtdSkiXSxbMywxLCJcXG1hdGhjYWx7UH1eMVxcbGVmdCgyLFxcbWF0aGJie1J9XlxcbWF0aGZyYWt7dX1fXFxtdVxccmlnaHQpIl0sWzMsMiwiVFxcbWF0aGNhbHtQfV4xIl0sWzEsMiwiVF97XFxtYXRoY2Fse1B9XjF9Il0sWzAsMCwiXFxsZWZ0KFxcbWF0aGJie0h9XjJcXHJpZ2h0KV5cXG1hdGhmcmFre3V9X1xcbGFtYmRhIl0sWzQsMCwiXFxoKFxcUnJeXFx1KVxcY2FwIChcXG1hdGhjYWx7T31fXFxtdSleMyJdLFs0LDIsIlRcXG1hdGhiYntSfV5cXG1hdGhmcmFre3V9X1xcbXUiXSxbMCwyLCJUIl0sWzEsMiwiXFxwaV9cXG11Il0sWzIsM10sWzAsMSwieFxcbWFwc3RvIFxcdmFycGhpKHgpIl0sWzAsNF0sWzUsMF0sWzEsNl0sWzYsN10sWzMsN10sWzUsOF0sWzgsNF0sWzQsMywiXFxldGEiXV0=
\[\begin{tikzcd}
	{\left(\mathbb{H}^2\right)^\mathfrak{u}_\lambda} & {\mathcal{P}^1(2,\Rr)^\mathfrak{u}_\lambda} && {\mathcal{P}^1(2,\mathbb{R}^\mathfrak{u}) \cap(\mathcal{O}_\mu)^3} & {\h(\Rr^\u)\cap (\mathcal{O}_\mu)^3} \\
	&&& {\mathcal{P}^1\left(2,\mathbb{R}^\mathfrak{u}_\mu\right)} \\
	T^\u & {T^\u_{\mathcal{P}^1}} && {T_{\mathcal{P}^1}\Rr_\mu^\u} & {T\mathbb{R}^\mathfrak{u}_\mu.}
	\arrow[from=1-1, to=1-2]
	\arrow[from=1-1, to=3-1]
	\arrow[from=1-2, to=1-4]
	\arrow[from=1-2, to=3-2]
	\arrow[from=1-4, to=1-5]
	\arrow[from=1-4, to=2-4]
	\arrow[from=1-5, to=3-5]
	\arrow[from=2-4, to=3-4]
	\arrow[from=3-1, to=3-2]
	\arrow["\eta", from=3-2, to=3-4]
	\arrow[from=3-4, to=3-5]
\end{tikzcd}\]
        The \G -equivariance is a direct consequence of the \slr-equivariance of the isometry $\h \to \symtwor$ and \cite[Lemma 5.12]{BIPPthereal}.
    \end{proof}

    With the notations from Theorem \ref{Thm: Isometry between the lambda building and the asymptotic cone bipp} and $(\Rr_\mu^\u)_\mathcal{O}:= \mathcal{O}_\mu / \mathcal{J}_\mu$, endow $T\Rr_\mu^\u$ with the orientation coming from the orientation on $\circle((\Rr_\mu^\u)_\mathcal{O})$, which we denote by $or_{(\Rr_\mu^\u)_\mathcal{O}}$, see Subsection \ref{Subsection: real tree associated to the real spectrum} and Corollary \ref{Cor: Cyclic order on the space of ends of TF}. 
    Endow also $T^\u:=\Cone{\u}{\h}{\l_k}{0}$ with the ultralimit orientation defined in Theorem \ref{Thm: The limit orientation is an orientation}. That is, if $\mathcal{G}_{T^\u}([P_k]^\u)$ denotes the germs of oriented segments at $[P_k]^\u\in T^\u$, then the map ${or^\u([P_k]^\u)}$:
        \[
            \mapp{\left(\mathcal{G}_{T^\u}([P_k]^\u)\right)^3}{\set{-1,0,1}}{\left({[x_k]^\u},{[y_k]^\u},{[z_k]^\u}\right)}{
            \begin{cases}
              0 & \text{if } \mathrm{Card}\set{{[x_k]^\u},{[y_k]^\u},{[z_k]^\u}} \leq 2, \\
              \lim_\u or(P_k)({x_k},{y_k},{z_k}) & \text{otherwise},
            \end{cases}}
        \]
    which is a cyclic order on $\mathcal{G}_{T^\u}([P_k]^\u)$, where $or(P_k)$ is the cyclic order at $\mathcal{G}_{\h}(P_k)$ given by the standard orientation on \h.

    \begin{Thm}\label{Thm: the ultralimit orientation is the same as the robinson orientation}
        With the above defined orientations on $T^\u$ and $T\Rr_\mu^\u$, the \G-equivariant isometry $\Psi\colon T^\u \rightarrow T\Rr_\mu^\u$ from Corollary \ref{Cor: Gamma equivariant isometry between robinson tree and asymptotic cone} is orientation preserving.
    \end{Thm}

    \begin{proof}
        Let $p_\u:=[(p^1+ip^2)_k]^\u\in T^\u$ and consider three germs of oriented segments in $\mathcal{G}_{T^\u}(p_\u)$ represented by $x_\u:=[(x^1+ix^2)_k],y_\u:=[(y^1+iy^2)_k]^\u,z_\u:=[(z^1+iz^2)_k]^\u\in T^\u$ at the same distance to $p_\u$. 
        For the projection map \func{\pi}{\h(\Rr_\mu^\u)}{T\Rr_\mu^\u}, consider 
        \begin{align*}
            (\mathcal{X}_k) & := \left[X_k^1\right]^\u + i\left[X_k^2\right]^\u \in \pi^{-1}\left(\left[x_k^1\right]^\u + i\left[x_k^2\right]^\u\right) = \pi^{-1}\left(\Psi\left(x_\u\right)\right), \\
            (\mathcal{Y}_k) & := \left[Y_k^1\right]^\u + i\left[Y_k^2\right]^\u \in \pi^{-1}\left(\left[y_k^1\right]^\u + i\left[y_k^2\right]^\u\right) = \pi^{-1}\left(\Psi\left(y_\u\right)\right), \\
            (\mathcal{Z}_k) & := \left[Z_k^1\right]^\u + i\left[Z_k^2\right]^\u \in \pi^{-1}\left(\left[z_k^1\right]^\u + i\left[z_k^2\right]^\u\right) = \pi^{-1}\left(\Psi\left(z_\u\right)\right)
        \end{align*}
        with the right distance to $(\mathcal{P}_k):=[P_k^1]^\u+i[P_k^2]^\u \in \pi^{-1}([p_k^1]^\u+i[p_k^2]^\u) = \pi^{-1}(\Psi(p_\u))$ so that they correspond to elements of $\circle(\Rr_\mu^\u)$, see Remark \ref{Rem: computation of the orientation from circle(F)}. Then
        \begin{align*}
            or_{(\Rr_\mu^\u)_{\mathcal{O}}}(\Psi(p_\u))(\Psi(x_\u),\Psi(y_\u),\Psi(z_\u)) & = or_{\Rr_\mu^\u}((\mathcal{P}_k))((\mathcal{X}_k),(\mathcal{Y}_k),(\mathcal{Z}_k)) \\
            & = \mathrm{sgn}_{\Rr_\mu^\u} \mathrm{det}_{\Rr_\mu^\u}((\mathcal{Y}_k)-(\mathcal{X}_k), (\mathcal{Z}_k)-(\mathcal{Y}_k)) \\
            & = \mathrm{sgn}_{\Rr_\mu^\u} [\mathrm{det}(Y_k - X_k,Z_k-Y_k)]^\u \\
            & = \mathrm{sgn}_{\Rr_\mu^\u} [or_\Rr(P_k)(X_k,Y_k,Z_k)]^\u \\
            & = \lim_\u or_\Rr(P_k)(X_k,Y_k,Z_k),
        \end{align*}
        where $X_k = (X^1 + iX^2)_k \in \h$ for every $k\in \Nn$ and similarly for $Y_k$ and $Z_k$. 
        The third equality holds because of the field operations of $\Rr_\mu^\u$ and the last one because of the definition of the order on $\Rr_\mu^\u$.
        Finally, since $\Psi$ preserves the distance (Theorem \ref{Thm: Isometry between the lambda building and the asymptotic cone bipp}) $[(X^1 + iX^2)_k] = [(x^1+x^2)_k] \in T^\u$ so that 
        \[
            \lim_\u or_\Rr(P_k)(X_k,Y_k,Z_k) = \lim_\u or_\Rr(p)\left(\left(x^1+ix^2\right)_k,\left(y^1+iy^2\right)_k,\left(z^1+iz^2\right)_k\right)
        \]
        and $\Psi$ is orientation preserving.
    \end{proof}     

    We associated a \G-action by isometries preserving the orientation on an oriented \Rr-tree to every element in $\partial\rsp{\Xi(\G,\pslr)}$. Moreover, the constructed orientation is natural and described by ultralimits of orientation on \h. This is partly due to the fact that the orientation on the circle is described by a semialgebraic equation and the naturalness of the real spectrum compactification in terms of asymptotic methods. In the next and final section, we use the results obtained so far to construct a continuous surjection from the real spectrum compactification to the oriented compactification of the character variety.

\section{A continuous surjection between both compactifications} \label{Section construction of the map}

    In this section we construct a continuous surjective map from \rsp{\Xi(\G,\pslr)} to \orient{\Xi(\G,\pslr)}. 
    By Theorem \ref{Thm: real spectrum as minimal representations in PSL}, every $[\p,\Ff] \in \partial\rsp{\Xi(\G,\pslr)}$ is represented by a reductive representation 
    \[
        \func{\p}{\G}{\pslf},
    \] where \Ff \ is real closed, \p-minimal, non-Archimedean, and of finite transcendence degree over $\overline{\Qq}^r$.
    In Subsection \ref{Subsection: real tree associated to the real spectrum}, we constructed an oriented \p-minimal \Rr-tree $(\p,T_\p,or_\p)$ which does not depend on the choice of representative in the class $[\p,\Ff]$, see Theorem \ref{Thm: valuation compatible field monomorphism implies embedding of trees} and Lemma \ref{Lem: associated oriented real tree for an element in the real spectrum}. 

    \begin{Def} \label{Def: construction of the map between compactifications}
        This construction gives the following map:
        \[
            \defmap{\beth}{\rsp{\Xi\left(\G,\pslr\right)}}{\Xi\left(\G,\pslr\right) \cup \mathcal{T}'}{\left[\p,\Ff\right]}{\left[\p,X,or_\p\right]=
            \begin{cases}
                \left[\p,\mathbb{H}^2,or\right] & \text{ if } \Ff=\Rr \\
                \left[\p,T_\p,or_\p\right]     & \text{ otherwise}.
            \end{cases}}
        \]
    \end{Def}

    \begin{Lem} \label{sequential continuity}
        The map $\beth$ defined in Definition \ref{Def: construction of the map between compactifications} is sequentially continuous for sequences in the interior of the character variety. That is, \fev \ \npul \ \u, and \fev \ sequence $[\p_k,\Rr]\in \Xi(\G,\pslr)$ which converges to $[\p,\Ff]\in \rsp{\Xi(\G,\pslr)}$ in the real spectrum topology, then
        \[
            \beth[\p,\Ff]=\lim_{\u}\beth[\p_k,\Rr]=\lim_{\u}\left[\p_k,\h,or\right]
        \]
        in the oriented Gromov equivariant topology.
    \end{Lem}
    
    \begin{proof}
        Since the real spectrum topology and the oriented Gromov equivariant topology are equivalent on $\Xi(\G,\pslr)$, it suffices to treat the case 
        \[
            [\p,\Ff]\in \partial \rsp{\Xi(\G,\pslr)}.
        \]
        Let \u \ be a non-principal ultrafilter and consider a sequence of representatives $(\p_k,\Rr)\in \mathcal{M}_\G(\Rr)$ of $[\p_k,\Rr]$ such that $0\in \h$ minimizes the displacement function of $\p_k$ for every $k\in \Nn$. 
        Since $\rsp{\mathcal{M}_\G(\Rr)}$ is compact, the sequence $(\p_k,\Rr)$ admits a \u-limit that we denote by $(\p,\Ff)$. By \cite[Proposition 7.5]{BIPPthereal}, the projection map 
        \[
            \func{p}{\mathcal{M}_\G(\Rr)}{\Xi(\G,\pslr)}
        \]
        extends continuously to a map $\speccl{p}\colon\rsp{\mathcal{M}_\G(\Rr)}\rightarrow \rsp{\Xi(\G,\pslr)}$, and since $\speccl{p}(\p_k,\Rr)$ converges to $[\p,\Ff]$, it holds 
        \[
            \speccl{p}(\p,\Ff)=[\p,\Ff].
        \]
        Because $(\p_k,\Rr)$ converges to a boundary element in $\partial\rsp{\mathcal{M}_\G(\Rr)}$, it holds $\lg(\p_k) \rightarrow \infty$. 
        Denote by $\mu$ the sequence of scalars $(e^{\lg{\p_k}})_k$, which is well adapted to $(\p_k)_k$ by \cite[Lemma 5.13]{BIPPthereal}, and consider 
        \[
            \left(\p_\mu^\u,\Rr_\mu^\u\right).
        \]

        \begin{claim}
            The elements $(\p_\mu^\u,\Rr_\mu^\u)$ and $(\p,\Ff)$ are equivalent in $\rsp{\mathcal{M}_\G(\Rr)}$.
        \end{claim}
        
        \begin{claimproof}
            Suppose $(\p_\mu^\u,\Rr_\mu^\u) \neq (\p,\Ff) \in \rsp{\mathcal{M}_\G(\Rr)}$. Since \rsp{\mathcal{M}_\G(\Rr)} is Hausdorff, there exist two open sets $U,U'$ with $U\cap U' =\varnothing$ such that
            \[
                \left(\p_\mu^\u,\Rr_\mu^\u\right) \in U \quad \text{and} \quad (\p,\Ff) \in U'.
            \]
            Since $\lim_\u (\p_k,\Rr) = (\p,\Ff)$, it follows that $(\p_k,\Rr) \in U'$ \u-almost surely.
            Let $f_1,\ldots,f_m \in \Rr[\mathcal{M}_\G]$ such that $U := \tilde{U}(f_1,\ldots,f_m)$. 
            By the description of the bijection in Theorem \ref{Thm: real spectrum as minimal representations in PSL} (see \cite[Proposition 6.3]{BIPPthereal}) 
            \[
                \p_\mu^\u(f_i)=[f_i(\p_k)]^\u \in \Rr_\mu^\u.
            \]
            In particular, for every $i\in\set{1,\ldots,m}$
            \begin{align*}
                \p_\mu^\u(f_i) > 0 \ & \iff \ f_i(\p_k) > 0 \ \text{\u-almost surely} \\
                & \iff \ \p_k \in \tilde{U}(f_i) \ \text{\u-almost surely}.
            \end{align*}
            Thus $\p_k \in U\cap U'$ \u-almost surely, which is a contradiction with $U\cap U' = \varnothing$ so that $(\p_\mu^\u,\Rr_\mu^\u) = (\p,\Ff) \in \rsp{\mathcal{M}_\G(\Rr)}$
        \end{claimproof}
        
        By Theorem \ref{Thm: real spectrum as minimal representations in PSL}, there exists an order preserving field morphism
        \[
            \psi\colon \Ff \rightarrow \Rr_\mu^{\u}
        \]
        such that $\psi \circ \p$ and $\p^\u_\mu$ are $\mathrm{PSL}_2(\Rr_\mu^\u)$-conjugate.
        Thus, by Theorem \ref{Thm: valuation compatible field monomorphism implies embedding of trees}, there exists a \G-equivariant isometry
        \[
            \func{\psi_\Ff}{(\p,T_\p,or_\p)}{\left(\p_\mu^\u,T_{\p_\mu^\u},or_{\p_\mu^\u}\right)}
        \]
        which is orientation preserving.
        By Theorem \ref{Thm: the ultralimit orientation is the same as the robinson orientation}, there exists 
        \[
            \func{\Psi}{T\Rr_\mu^\u}{T^\u}
        \]
        a \G -equivariant isometry (for the \G \ actions induced by $\p_\mu^\u$ and $\lim_\u \p_k$, see Corollary \ref{Cor: Gamma equivariant isometry between robinson tree and asymptotic cone}), which is orientation preserving for the orientations $or_{(\Rr^\u_\mu)_\mathcal{O}}$ on $T\Rr^\u_\mu$ and $or^\u$ on $T^\u$. 
        Since $T_{\p^\u_\mu} \subset T\Rr_\mu^\u$, we obtain a \G -equivariant orientation preserving isometric embedding 
        \[
            \func{\psi_\u:= \Psi \circ \psi_\Ff}{(\p,T_\p,or_\p)}{(\p^\u,T^\u,or^\u)}.
        \]
        Since the $\p^\u$-invariant minimal subtree of $T^\u$ is unique, up to isometry \cite[Proposition 2.4]{Pthe}, it is contained in $\psi_\u(T_\p)$, so that $\psi_\u$ induces a \G -equivariant orientation preserving isometry $(\p,T_\p,or_\p) \rightarrow (\p^\u, (T^\u)_\p,or^\u)$, where $(T^\u)_\p$ is the $\p^\u$-minimal invariant subtree of $T^\u$. Hence, up to \G -equivariant orientation preserving isometry
        \[
            \beth[\p,\Ff]= [\p^\u, (T^\u)_\p,or^\u] \in \partial \orient{\Xi(\G,\pslr)}.
        \]        
        Finally, by Theorem \ref{Thm: Wolff limit is the ultralimit}
        \[
            \lim_\u(\p_k,\h,or) = [\p^\u,(T^\u)_\p,or^\u] \in \partial\orient{\Xi(\G, \pslr)}.
        \]
        Since \u \ is arbitrary, the map $\beth$ defined in Definition \ref{Def: construction of the map between compactifications} is sequentially continuous for sequences in the interior of the character variety.
    \end{proof}
        
    \begin{Lem} \label{Lem: Continuity of the map}
        The map $\beth$ from Definition \ref{Def: construction of the map between compactifications} is continuous.
    \end{Lem}
  
    \begin{proof}
        Let $[\p_k,\Ff_k] \in \rsp{\Xi(\G,\pslr)}$ be a sequence that converges to $[\p,\Ff]\in \rsp{\Xi(\G,\pslr)}$. 
        Since $\rsp{\Xi(\G,\pslr)}$ is metrizable by the first item of \cite[Proposition 2.33]{BIPPthereal}, consider a countable basis of open neighborhoods $(B([\p,\Ff],1/N))_N \subset \rsp{\Xi(\G,\pslr)}$ around $[\p,\Ff]$, where $B([\p,\Ff],1/N)$ denotes the open ball of radius $1/N$ centered at $[\p,\Ff]$ for a fixed metric defining the spectral topology.
        Since $([\p_k,\Ff_k])_k$ converges to $[\p,\Ff]$, for every $N\in \Nn$, there exists $n_1(N)>N$ such that
        \[
            \forall m> n_1(N) \quad [\p_{m},\Ff_{m}] \in B\left(\left[\p,\Ff\right], \frac{1}{N}\right).
        \]
        Suppose, for contradiction, that $(\beth[\p_k,\Ff_k])_k$ does not converge to $\beth [\p,\Ff]$. That is, there exists $U\in \orient{\Xi(\G,\pslr)}$ open such that $\beth[\p,\Ff] \in U$ and 
        \[
            \forall N\in \Nn, \ \exists n(N)\geq n_1(N) \quad \text{with} \quad  \beth[\p_{n(N)},\Ff_{n(N)}] \not\in U. 
        \]
        Thus the function $\func{n}{\Nn}{\Nn}$ is strictly increasing and the sequence $([\p_{n(N)},\Ff_{n(N)}])_N$ verifies:
        \[
            [\p_{n(N)},\Ff_{n(N)}] \in B\left([\p,\Ff],\frac{1}{N}\right) \quad \text{and} \quad \beth[\p_{n(N)},\Ff_{n(N)}] \not\in U.
        \]
        Since \orient{\Xi(\G,\pslr)} is compact and Hausdorff, it is normal. In particular, with $\orient{\Xi(\G,\pslr)}\backslash U$ closed and $\beth[\p,\Ff] \in U$, there exist $U_1,U_2 \subset \orient{\Xi(\G,\pslr)}$ open such that 
        \[
            \orient{\Xi(\G,\pslr)} \backslash U \subset U_1, \quad  \beth [\p,\Ff] \in U_2, \quad \text{and} \quad U_1\cap U_2 = \varnothing.
        \]
        By the density of $\Xi(\G,\pslr)$ in $\rsp{\Xi(\G,\pslr)}$, for every $N\in \Nn$, there exists a sequence $([\p_{n(N)}^m,\Rr])_m \in \rsp{\Xi(\G,\pslr)}$ such that
        \[
            \lim_m \left[\p_{n(N)}^m,\Rr\right] = \left[\p_{n(N)},\Ff_{n(N)}\right]
        \]
        in the spectral topology.
        By Lemma \ref{sequential continuity}
        \[
            \lim_m \beth \left[\p_{n(N)}^m,\Rr\right] = \beth \left[\p_{n(N)},\Ff_{n(N)}\right] \in U_1. 
        \]
        Hence, there exists $\ell_1(N) \in \Nn$ with $\beth [\p_{n(N)}^m,\Rr] \in U_1$ for all $m \geq \ell_1(N)$. Up to considering a subsequence, we assume that $\ell_1$ is strictly increasing with $N$, and define the sequence 
        \[
            \left(\left[\p_{n(N)}^{\ell_1(N)},\Rr\right]\right)_N \in \Xi(\G,\pslr).
        \]
        Since $[\p_{n(N)},\Ff_{n(N)}] \in B([\p,\Ff],1/N)$ is open, there exists $\ell(N)\geq \ell_1(N)$ with 
        \[
            \forall m\geq \ell(N) \quad \left[\p_{n(N)}^m,\Rr\right] \in B\left([\p,\Ff],\frac{1}{N}\right).
        \]
        Hence the sequence $([\p_{n(N)}^{\ell(N)},\Rr])_N$ verifies 
        \[
            \lim_N \left[\p_{n(N)}^{\ell(N)},\Rr\right] = [\p,\Ff], \quad \text{and} \quad \beth \left[\p_{n(N)}^{\ell(N)},\Rr\right] \in U_1
        \]
        for every $N\in \Nn$. This is a contradiction with Lemma \ref{sequential continuity}. Thus        
        \[
            \lim_\u \beth[\p_k,\Ff_k]=\beth[\p,\Ff]
        \]
        so that $\beth$ is sequentially continuous. 
        Finally, \rsp{\Xi(\G,\pslr)} is metrizable by the first item of \cite[Proposition 2.33]{BIPPthereal}, so that sequential continuity implies continuity. Hence $\beth$ is continuous.
    \end{proof}
        
    \begin{Thm} \label{Thm: there exists a continuous surjection from one compactification to the other}
        The map $\func{\beth}{\rsp{\Xi(\G,\pslr)}}{\orient{\Xi(\G,\pslr)}}$ from Definition \ref{Def: construction of the map between compactifications} is a continuous surjection.
    \end{Thm}

    \begin{proof}
        From Lemma \ref{Lem: Continuity of the map}, $\beth$ is continuous and is the identity on $\Xi(\G,\pslr)$. Therefore $\beth(\rsp{\Xi(\G,\pslr)})$ is a compact set that contains $\Xi(\G,\pslr)$. 
        Since the oriented Gromov equivariant topology is Hausdorff (Subsection \ref{Subsection: construction of the oriented compactification}), $\beth(\rsp{\Xi(\G,\pslr)})$ is in particular closed.
        Because the character variety is dense within its oriented compactification, we have the inclusion
        \[
            \orient{\Xi\left(\G,\pslr\right)} \subset \beth\left(\rsp{\Xi\left(\G,\pslr\right)}\right).
        \]
        Moreover, by density of the character variety within its real spectrum compactification and continuity of $\beth$, the space $\Xi(\G,\pslr) = \beth(\Xi(\G,\pslr))$ is dense in the image of $\beth$. 
        Hence 
        \[
            \beth\left(\rsp{\Xi\left(\G,\pslr\right)}\right)=\orient{\Xi\left(\G,\pslr\right)},
        \]
        so that $\beth$ is continuous and surjective.
    \end{proof}

%\addcontentsline{toc}{Section}{References} 
\bibliography{ReferenceT} 
\bibliographystyle{alpha}  

\end{document}